\newtheorem{theorem}{Theorem}[section]
\newtheorem{corollary}[theorem]{Corollary}
\newtheorem{lemma}[theorem]{Lemma}
\newtheorem{proposition}[theorem]{Proposition}
\newtheorem{conjecture}[theorem]{Conjecture}
\theoremstyle{definition}
\newtheorem{definition}[theorem]{Definition}
\newtheorem{example}[theorem]{Example}
\theoremstyle{remark}
\newtheorem{remark}[theorem]{Remark}
\numberwithin{equation}{section}
\renewcommand{\p@enumii}{}
\newcommand{\RR}{\mathbb{R}}
\newcommand{\NN}{\mathbb{N}}
\def\<#1>{\langle #1 \rangle}
\newbox\onebox
\newcommand{\coherent}[1]{\mathbin{\setbox\onebox=\hbox{$=$}\lower0.5\ht%
\onebox\hbox{$\stackrel{#1}{=}$}}}
\newcommand{\tld}[1]{\tilde{#1}}
\newcommand{\ol}[1]{\overline{#1}}
\newcommand{\mbf}[1]{\mathbf{#1}}
\newcommand{\tldbf}[1]{\tilde{\mathbf{#1}}}
\newcommand{\ta}{\tilde{a}}
\newcommand{\tb}{\tilde{b}}
\newcommand{\tx}{\tilde{x}}
\newcommand{\ty}{\tilde{y}}
\newcommand{\tz}{\tilde{z}}
\newcommand{\tr}{\tilde{r}}
\newcommand{\tX}{\tilde{X}}
\newcommand{\pretan}[2]{
\def\a{}\def\b{#2}
\ifx\b\a \Omega_{\infty}^{#1}
\else \Omega_{\infty,\tilde #2}^{#1}\fi}
\newcommand{\bfpretan}[2]{
\def\a{}\def\b{#2}
\ifx\b\a \bm{\Omega}_{\infty}^{#1}
\else \bm{\Omega}_{\infty,\tilde #2}^{#1}\fi}
\newcommand{\sstable}[2]{\tilde #1_{\infty,\tilde #2}}
\newcommand{\seq}[3][\mathbb{N}]{(#2)_{#3\in #1}}
\newcommand{\sign}[1]{\operatorname{sign}(#1)}
\newcommand{\dist}[3][r]{d_{\tilde #1}(\tilde #2, \tilde #3)}
\newcommand{\Dist}[2][r]{\tilde{d}_{\tilde #1}(\tilde #2)}
\newcommand{\acr}{\newline\indent}
\begin{document}

\title[On equivalence of unbounded metric spaces at infinity]{On equivalence of unbounded metric spaces at infinity}

\author{Viktoriia Bilet}
\address{\textbf{Viktoriia Bilet}\acr
Institute of Applied Mathematics and Mechanics of NASU\acr
Dobrovolskogo str. 1, Slovyansk 84100, Ukraine}
\email{viktoriiabilet@gmail.com}

\author{Oleksiy Dovgoshey}
\address{\textbf{Oleksiy Dovgoshey}\acr
Institute of Applied Mathematics and Mechanics of NASU\acr
Dobrovolskogo str. 1, Slovyansk 84100, Ukraine}
\email{oleksiy.dovgoshey@gmail.com}

\subjclass[2020]{Primary 54E35.}
\keywords{Unbounded metric space, pseudometric space, equivalence relation, pretangent space at infinity.}

\begin{abstract}
Let \((X, d)\) be an unbounded metric space. To investigate the asymptotic behavior of \((X, d)\) at infinity, one can consider a sequence of rescaling metric spaces \((X, \frac{1}{r_n} d)\) generated by given sequence \((r_n)_{n \in \NN}\) of positive reals with \(r_n \to \infty\). Metric spaces that are limit points of the sequence \((X, \frac{1}{r_n} d)_{n \in \NN}\) will be called pretangent spaces to \((X, d)\) at infinity. We found the necessary and sufficient conditions under which two given unbounded subspaces of \((X, d)\) have the same pretangent spaces at infinity. In the case when \((X, d)\) is the real line with Euclidean metric, we also describe all unbounded subspaces of \((X, d)\) isometric to their pretangent spaces.
\end{abstract}

\maketitle

%
\tableofcontents
\clearpage

\section{Introduction}

Under asymptotic pretangent spaces to an unbounded metric space $(X,d)$ we mean the metric spaces which are the limits of rescaling metric spaces $\left(X, \frac{1}{r_n} d\right)$ for $r_n$ tending to infinity. The Gromov---Hausdorff convergence and the asymptotic cones are most often used for construction of such limits. Both of these approaches are based on higher-order abstractions (see, for example, \cite{Ro} for details), which makes them very powerful, but it does away the constructiveness. In this paper we use a more elementary, sequential approach, which was proposed in \cite{BD2017} and applied to description of finiteness, uniqueness, and completeness of asymptotic pretangent spaces in \cite{BD2018, BD2019, BD2019a} .

The sequential approach to defining the metric structure of \((X, d)\) at infinity can be informally described as performing the following operations:
\begin{itemize}
\item Among all sequences of points from \(X\), select those that tend to infinity with a given speed.
\item Determine a pseudometric \(D\) on the set of all such sequences so that \(D(\tx, \ty) = 0\) holds if and only if the sequences \(\tx\) and \(\ty\) are ``indistinguishable'' at infinity.
\item Transform the resulting pseudometric space into a metric space whose points are classes of indistinguishable at infinity sequences of points from \(X\).
\end{itemize}

The paper is organized as follows. 

In Section~\ref{sec2}, we recall basic facts related to general pseudometric spaces and introduce the concept of pseudoisometries. Some new interrelations between the closed and complete subsets of pseudometric spaces are described in Proposition~\ref{p2.7} and Corollaries~\ref{c2.8}--\ref{c2.9}. It was shown in Theorem~\ref{t2.10} that two pseudometric spaces admit a pseudoisometry if and only if their metric identifications are isometric.

In section~\ref{sec3} we define the notions of asymptotically pretangent and asymptotically tangent metric spaces, which are the main objects of our studies.

Some basic properties of asymptotically pretangent and tangent spaces are collected together in Section~\ref{sec4}. In particular, Proposition~\ref{p1.2.2} describes the sequences of points of an unbounded metric space \(X\) turning into a unique point \(\nu_{0}\) of asymptotically pretangent spaces such that \(\nu_0\) is ``close to \(X\) as much as possible''. 

The gluing pretangent and tangent spaces are considered in Section~\ref{sec5}. The main result of the section, Theorem~\ref{t5.2}, shows as porosity at infinity of a distance set of the original metric space \(X\) related to the porosity of a distance set of the gluing of pretangent spaces to \(X\).

In Section~\ref{sec6}, we investigate the asymptotically pretangent space to the metric subspaces of the real line \(\RR\). The main result of the section, Theorem~\ref{t4.3}, completely characterizes the subspaces of \(\RR\) which are isometric to their asymptotically pretangent spaces.

Conditions under which two subspaces of an unbounded metric spaces are asymptotically equivalent were found in Theorem~\ref{t3.1.6} of Section~\ref{sec7}.

In conclusion of this brief introduction, we note that there exist other techniques for studying metric spaces at infinity. As examples, we mention Gromov's products~\cite{BS, Sc}, balleans~\cite{PZ} and the Wijsman convergence \cite{LechLev, Wijs64, Wijs66, BDD2017, Beer1993, Bee1994}.

\section{About metrics and pseudometrics}
\label{sec2}

In what follows, we will denote by \(\NN\) the set of all strictly positive integer numbers, and \(\RR\) the set of all real numbers and write \(\RR_{+} = [0, \infty)\).

A \textit{metric} on a set \(X\) is a function \(d\colon X^{2} \to \RR\) such that for all \(x\), \(y\), \(z \in X\):
\begin{enumerate}
\item \(d(x,y) \geqslant 0\) with equality if and only if \(x=y\), the \emph{positivity property};
\item \(d(x,y)=d(y,x)\), the \emph{symmetry property};
\item \(d(x, y)\leq d(x, z) + d(z, y)\), the \emph{triangle inequality}.
\end{enumerate}

A \emph{metric space} is pair \((X, d)\) consisting of a set \(X\) and a metric \(d\) on \(X\). If \((X, d)\) is a metric space and \(A\) is a subset of \(X\), then, for simplicity, we well keep the notation \(A\) for the metric space \((A, d_{A \times A})\), where \(d_{A \times A}\) is the restriction of \(d \colon X \times X \to \RR\) on \(A \times A\).

A useful generalization of the concept of metric is the concept of pseudometric.

\begin{definition}\label{ch2:d2}
Let \(X\) be a set and let \(d \colon X^{2} \to \RR\) be a nonnegative, symmetric function such that \(d(x, x) = 0\) for every \(x \in X\). The function \(d\) is a \emph{pseudometric} on \(X\) if it satisfies the triangle inequality.
\end{definition}

If \(d\) is a pseudometric on \(X\), we say that \((X, d)\) is a \emph{pseudometric} \emph{space}. It is clear that a pseudometric \(d \colon X^{2} \to \RR\) is a metric if and only if \(d(x, y) = 0\) implies \(x = y\) for all \(x\), \(y \in X\).

Let \((X, d)\) be a pseudometric space. An \emph{open ball} with a \emph{radius} \(r > 0\) and a \emph{center} \(c \in X\) is the set 
\[
B_r(c) = \{x \in X \colon d(c, x) < r\}.
\]

A sequence \((x_n)_{n \in \mathbb{N}} \subseteq X\) is a \emph{Cauchy sequence} in \((X, d)\) if, for every \(r > 0\), there is an integer \(n_0 \in \NN\) such that \(x_n \in B_r(x_{n_0})\) for every \(n \geqslant n_0\). It is easy to see that \((x_n)_{n \in \mathbb{N}}\) is a Cauchy sequence if and only if 
\begin{equation*}
\lim_{n \to \infty} \sup\{d(x_n, x_{n+k}) \colon k \in \mathbb{N}\} = 0.
\end{equation*}

\begin{remark}\label{r2.17}
Here and later the symbol \((x_n)_{n\in \mathbb{N}} \subseteq X\) means that \(x_n \in X\) holds for every \(n \in \mathbb{N}\).
\end{remark}

A sequence \((x_n)_{n \in \mathbb{N}}\) of points in a pseudometric space \((X, d)\) is said to \emph{converge to} a point \(a \in X\),
\begin{equation*}
\lim_{n \to \infty} x_n = a,
\end{equation*}
if, for every open ball \(B\) containing \(a\), it is possible to find an integer \(n_0 \in \NN\) such that \(x_n \in B\) for every \(n \geqslant n_0\). Thus, \((x_n)_{n \in \mathbb{N}}\) is \emph{convergent} to \(a\) if and only if 
\begin{equation*}
\lim_{n \to \infty} d(x_n, a) = 0.
\end{equation*}
A sequence is convergent if it is convergent to some point. It is clear that every convergent sequence is a Cauchy sequence.

The next basic concept is the concept of \emph{completeness}.

\begin{definition}\label{d2.6}
A subset \(S\) of a pseudometric space \((X, d)\) is complete if for every Cauchy sequence \((x_n)_{n \in \mathbb{N}} \subseteq S\) there is a point \(a \in S\) such that 
\[
\lim_{n \to \infty} x_n = a.
\]
\end{definition}

As in the case of metric spaces, we can introduce a topology on pseudometric spaces using the open balls.

\begin{definition}\label{d2.4}
Let \((X, d)\) be a pseudometric space, \(\mbf{B}_X = \mbf{B}_{X, d}\) be the set of all open balls in \((X, d)\) and \(\tau\) be a topology on \(X\). We will say that \(\tau\) is generated by pseudometric \(d\) if \(\mbf{B}_X\) is an open base for \(\tau\). 
\end{definition}

Thus, \(\tau\) is generated by \(d\) if and only if every \(B \in \mbf{B}_X\) belongs to \(\tau\) and every nonempty \(A \in \tau\) is the union of some family of elements of \(\mbf{B}_X\).

In what follows, we assume that every pseudometric space \((X, d)\) is also a topological space endowed with the topology \(\tau\) generated by \(d\).

\begin{proposition}\label{p2.4}
If \(X\) is a complete metric space, then \(X\) is closed in every metric superspace of \(X\).
\end{proposition}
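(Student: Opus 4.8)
The plan is to verify that $X$ contains all of its limit points inside an arbitrary metric superspace, which is the standard characterization of closedness. First I would fix a metric superspace $(Y,\rho)$ of $(X,d)$, meaning $X\subseteq Y$ and $\rho$ restricts to $d$ on $X\times X$. To show that $X$ is closed in $Y$, I would take an arbitrary point $y$ in the closure $\overline{X}$ of $X$ in $Y$ and prove that $y\in X$; this gives $\overline{X}=X$.

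Since $Y$ is a metric space, its topology is first countable, so membership of $y$ in $\overline{X}$ is witnessed by a sequence $(x_n)_{n\in\mathbb{N}}\subseteq X$ with $\lim_{n\to\infty}x_n=y$ computed in $Y$. Being convergent, this sequence is a Cauchy sequence in $Y$ (as observed just after the definition of convergence). Because $\rho$ coincides with $d$ on $X\times X$, the Cauchy condition for $(x_n)_{n\in\mathbb{N}}$ in $Y$ is exactly the Cauchy condition in $X$, so $(x_n)_{n\in\mathbb{N}}$ is Cauchy in $X$. Completeness of $X$ then supplies a point $a\in X$ with $\lim_{n\to\infty}x_n=a$ in $X$, and hence also in $Y$, since the distances are the same.

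At this point $(x_n)_{n\in\mathbb{N}}$ converges both to $y$ and to $a$ in $Y$. I would finish by invoking uniqueness of limits in the \emph{metric} space $Y$: if $\rho(y,a)\leqslant\rho(y,x_n)+\rho(x_n,a)\to 0$, then $\rho(y,a)=0$, and the positivity property of the metric forces $y=a$. Therefore $y=a\in X$, completing the proof that $\overline{X}\subseteq X$.

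The one place where the hypothesis ``metric'' (rather than merely ``pseudometric'') is genuinely used is this final uniqueness-of-limits step, and that is the only real obstacle in the argument. In a pseudometric superspace limits need not be unique, so the positivity property is exactly what cannot be dispensed with; everything preceding it is routine manipulation of the Cauchy and convergence definitions recalled above.
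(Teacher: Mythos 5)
Your proof is correct. The paper itself gives no proof of Proposition~\ref{p2.4}; it simply cites Theorem~10.2.1 of \cite{Sea2007}, and your argument is precisely the standard sequential one that such a reference contains: approximate a closure point by a sequence from \(X\), note it is Cauchy (the restriction of \(\rho\) to \(X\times X\) being \(d\)), use completeness to produce a limit \(a\in X\), and conclude \(y=a\) by uniqueness of limits. Your closing observation --- that positivity of the metric enters only in the uniqueness-of-limits step and is exactly what breaks down for pseudometrics --- is a genuine bonus: it explains the paper's remark immediately after the proposition that the statement becomes invalid for pseudometric superspaces, and it points to why the correct pseudometric analogue (Proposition~\ref{p2.7}) must replace ``\(y=a\)'' by the condition \([a]_0\subseteq A\).
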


(See, for example, Theorem~10.2.1 \cite{Sea2007}).

This result becomes invalid in the case of pseudometrics. One can prove that a complete pseudometric space \(X\) is closed in all pseudometric superspaces of \(X\) if and only if \(X = \varnothing\). A valid pseudometric analogy of Proposition~\ref{p2.4} will be given below in Proposition~\ref{p2.7} and Corollary~\ref{c2.8}.

Let \(X\) be a set. A \emph{binary relation} on \(X\) is a subset of the Cartesian square
\[
X^2 = X \times X = \{\<x, y>\colon x, y \in X\}.
\]
A binary relation \(R \subseteq X^{2}\) is an \emph{equivalence relation} on \(X\) if the following conditions hold for all \(x\), \(y\), \(z \in X\):
\begin{enumerate}
\item \(\<x, x> \in R\), the \emph{reflexivity} property;
\item \((\<x, y> \in R) \Leftrightarrow (\<y, x> \in R)\), the \emph{symmetry} property;
\item \(((\<x, y> \in R) \text{ and } (\<y, z> \in R)) \Rightarrow (\<x, z> \in R)\), the \emph{transitivity} property.
\end{enumerate}

If \(R\) is an equivalence relation on \(X\), then an \emph{equivalence class} is a subset \([a]_R\) of \(X\) having the form
\begin{equation}\label{e1.1}
[a]_R = \{x \in X \colon \<x, a> \in R\}, \quad a \in X.
\end{equation}
The \emph{quotient set} of \(X\) with respect to \(R\) is the set of all equivalence classes \([a]_R\), \(a \in X\).

There exists the well-known, one-to-one correspondence between the equivalence relations and partitions.

Let \(X\) be a nonempty set and \(P = \{X_j \colon j \in J\}\) be a set of nonempty subsets of \(X\). Then \(P\) is a \emph{partition} of \(X\) with the blocks \(X_j\) if
\[
\bigcup_{j \in J} X_j = X,
\]
and \(X_{j_1} \cap X_{j_2} = \varnothing\) holds for all distinct \(j_1\), \(j_2 \in J\). 

\begin{proposition}\label{p2.2}
Let \(X\) be a nonempty set. If \(P = \{X_j \colon j \in J\}\) is a partition of \(X\) and \(R_P\) is a binary relation on \(X\) defined as
\begin{itemize}
\item[] \(\<x, y> \in R_P\) if and only if \(\exists j \in J\) such that  \(x \in X_j\) and \(y \in X_j\),
\end{itemize}
then \(R_P\) is an equivalence relation on \(X\) with the equivalence classes \(X_j\). Conversely, if \(R\) is an equivalence relation on \(X\), then the quotient set of \(X\) with respect to \(R\) is a partition of \(X\) with the blocks \([a]_R\).
\end{proposition}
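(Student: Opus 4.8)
The plan is to verify the two directions separately, each reducing to the defining axioms of equivalence relations and partitions.

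For the first direction, I would check reflexivity, symmetry, and transitivity of \(R_P\) directly. Reflexivity and symmetry are immediate: every \(x \in X\) lies in some block because the blocks cover \(X\), which gives \(\<x, x> \in R_P\); and the defining condition ``\(x \in X_j\) and \(y \in X_j\)'' is manifestly symmetric in \(x\) and \(y\). The only step genuinely requiring the partition structure is transitivity: if \(\<x, y> \in R_P\) and \(\<y, z> \in R_P\), witnessed by blocks \(X_{j_1}\) and \(X_{j_2}\), then \(y \in X_{j_1} \cap X_{j_2}\), so the disjointness of distinct blocks forces \(j_1 = j_2\), whence \(x, z \in X_{j_1}\) and \(\<x, z> \in R_P\). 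To identify the equivalence classes with the blocks, I would fix \(a \in X\), let \(X_j\) be the unique block containing \(a\), and prove \([a]_{R_P} = X_j\) by a double inclusion, again using that \(a\) belongs to exactly one block.

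For the converse direction, I would show that the quotient set \(\{[a]_R \colon a \in X\}\) satisfies the three requirements of a partition. Each class is nonempty, since \(a \in [a]_R\) by reflexivity, and for the same reason the classes cover \(X\). The essential point is the disjointness of distinct blocks, which I would derive from the standard ``equal-or-disjoint'' dichotomy: assuming \([a]_R \cap [b]_R \neq \varnothing\) and picking \(c\) in the intersection, symmetry and transitivity yield \(\<a, b> \in R\), and then a further application of transitivity gives \([a]_R \subseteq [b]_R\) and, symmetrically, the reverse inclusion; hence two classes that meet must coincide.

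There is no real obstacle here, as every step is a direct application of the axioms. The only places demanding a little care are transitivity in the first direction, where the disjointness hypothesis on the blocks is genuinely used (and not merely the covering condition), and the symmetric handling of the equal-or-disjoint dichotomy in the second direction.
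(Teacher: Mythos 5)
Your proof is correct and complete: both directions reduce cleanly to the axioms, and you correctly isolate the two points that genuinely need care (the disjointness of blocks for transitivity of \(R_P\), and the equal-or-disjoint dichotomy for the converse); the only detail worth making explicit is that the nonemptiness of blocks is what guarantees every block \(X_j\) actually arises as an equivalence class \([a]_{R_P}\) for some \(a \in X_j\), completing the identification of the classes with the blocks. Note that the paper gives no proof of this proposition at all---it is quoted as a well-known fact with a reference to Kuratowski and Mostowski---so there is no internal argument to compare against; yours is precisely the standard textbook verification that the cited source carries out.
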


For the proof, see, for example, \cite[Chapter~II, \S{}~5]{KurMost}.

For every pseudometric space \((X, d)\), we define a binary relation \(\coherent{0}\) on \(X\) as
\begin{equation}\label{ch2:p1:e1}
(x \coherent{0} y) \Leftrightarrow (d(x, y) = 0), \quad x, y \in X
\end{equation}
and, similarly~\eqref{e1.1}, we write 
\begin{equation}\label{e2.2}
[a]_0 = \{x \in X \colon d(a, x) = 0\},
\end{equation}
for every \(a \in X\).

\begin{proposition}\label{ch2:p1}
Let \(X\) be a nonempty set and let \(d \colon X^{2} \to \RR\) be a pseudometric on \(X\). Then \({\coherent{0}}\) is an equivalence relation on \(X\) and the function \(\delta_d\),
\begin{equation}\label{e1.1.5}
\delta_d(\alpha, \beta) := d(x, y), \quad x \in \alpha \in X/{\coherent{0}}, \quad y \in \beta \in X/{\coherent{0}},
\end{equation}
is a correctly defined metric on \(X/{\coherent{0}}\), where \(X/{\coherent{0}}\) is the quotient set of \(X\) with respect to \(\coherent{0}\).
\end{proposition}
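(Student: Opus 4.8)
The plan is to verify the two assertions in turn: first that $\coherent{0}$ is an equivalence relation, and then that $\delta_d$ is a correctly defined metric on the quotient $X/{\coherent{0}}$.

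First I would check the three equivalence axioms directly from Definition~\ref{ch2:d2}. Reflexivity is immediate since $d(x,x)=0$ gives $x \coherent{0} x$, and symmetry follows at once from $d(x,y)=d(y,x)$. For transitivity, suppose $x \coherent{0} y$ and $y \coherent{0} z$, i.e.\ $d(x,y)=0=d(y,z)$; the triangle inequality yields $0 \leqslant d(x,z) \leqslant d(x,y)+d(y,z)=0$, so $d(x,z)=0$ and hence $x \coherent{0} z$. This establishes that $\coherent{0}$ is an equivalence relation, and by Proposition~\ref{p2.2} the quotient set $X/{\coherent{0}}$ is the partition of $X$ into the classes $[a]_0$ of~\eqref{e2.2}.

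The hard part — and really the only nontrivial point — is the correctness of the definition~\eqref{e1.1.5}, namely that the number $d(x,y)$ does not depend on the choice of representatives $x \in \alpha$ and $y \in \beta$. Suppose $x, x' \in \alpha$ and $y, y' \in \beta$, so that $d(x,x')=0$ and $d(y,y')=0$. Applying the triangle inequality twice gives $d(x,y) \leqslant d(x,x') + d(x',y') + d(y',y) = d(x',y')$, and by the symmetric argument $d(x',y') \leqslant d(x,y)$; hence $d(x,y)=d(x',y')$. Thus $\delta_d$ is a well-defined function on $(X/{\coherent{0}})^2$.

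It then remains to confirm the metric axioms for $\delta_d$, which I would read off from those of $d$ after fixing representatives once. Nonnegativity of $\delta_d(\alpha,\beta)=d(x,y)$ is inherited from $d$, and symmetry follows from $d(x,y)=d(y,x)$. For the triangle inequality, choosing $x \in \alpha$, $y \in \beta$, $z \in \gamma$ gives $\delta_d(\alpha,\gamma)=d(x,z) \leqslant d(x,y)+d(y,z)=\delta_d(\alpha,\beta)+\delta_d(\beta,\gamma)$. Finally comes the crucial separation property: $\delta_d(\alpha,\beta)=0$ means $d(x,y)=0$, i.e.\ $x \coherent{0} y$, which by the definition of the classes is equivalent to $\alpha=\beta$. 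This last equivalence is exactly the place where passing to the quotient upgrades the pseudometric $d$ into a genuine metric $\delta_d$.
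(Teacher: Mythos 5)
Your proof is correct and complete: the three equivalence axioms, the representative-independence of $\delta_d$ via the double application of the triangle inequality, and all four metric axioms (including the separation property, which is the whole point of passing to the quotient) are all verified soundly. For comparison, the paper does not prove this proposition at all --- it simply cites Kelley's \emph{General Topology} \cite[Ch.~4, Th.~15]{Kelley1975} --- so your self-contained verification is precisely the standard argument that the citation outsources; there is no divergence of method to report, only the fact that you supplied the details the paper chose to omit.
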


The proof of Proposition~\ref{ch2:p1} can be found in \cite[Ch.~4, Th.~15]{Kelley1975}.
\medskip

In what follows we will sometimes say that \((X / \coherent{0}, \delta_d)\) is the \emph{metric identification} of the pseudometric space \((X, d)\).

Now we are ready to formulate and prove a partial analog of Proposition~\ref{p2.4} for the pseudometric spaces.

\begin{proposition}\label{p2.7}
Let \((X, d)\) be a pseudometric space and let \(A\) be a complete subset of \((X, d)\). Then \(A\) is closed in \((X, d)\) if and only if 
\begin{equation}\label{p2.7:e1}
[a]_0 \subseteq A
\end{equation}
holds for every \(a \in A\), where \([a]_0\) is defined by \eqref{e2.2}.
\end{proposition}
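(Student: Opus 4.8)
The statement is an "if and only if" about a complete subset $A$ of a pseudometric space $(X,d)$: $A$ is closed exactly when $A$ is saturated with respect to the relation $\coherent{0}$, i.e. $[a]_0 \subseteq A$ for every $a \in A$. The plan is to prove each implication separately, exploiting the crucial feature of pseudometrics that distinct points can be at distance zero, so that a single point $a$ has a whole "cluster" $[a]_0$ of points indistinguishable from it.

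The hard part conceptually is to see why completeness alone fails to force closedness in the pseudometric setting, and the proof should make that visible. The plan for the forward direction (assume $A$ closed, prove saturation) is as follows. Take $a \in A$ and any $b \in [a]_0$, so $d(a,b)=0$. Consider the constant sequence $x_n = b$ for all $n$; then $d(x_n, a) = d(b,a) = 0 \to 0$, so this sequence converges to $a \in A$. Hence $b$ is a limit of a sequence lying in $A$ (here I may instead use the constant sequence $a \in A$ converging to $b$, which is cleaner: since $\lim_n a = b$ and all terms $a$ lie in $A$, the point $b$ is in the closure of $A$). Because $A$ is closed, $b \in A$, giving $[a]_0 \subseteq A$.

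For the reverse direction (assume saturation, prove closedness) I would take an arbitrary point $a$ in the closure of $A$ and produce a sequence $(x_n)_{n\in\NN} \subseteq A$ with $\lim_n x_n = a$, i.e. $d(x_n,a) \to 0$. The key observation is that such a convergent sequence is automatically Cauchy (noted in the excerpt), and by completeness of $A$ it converges to some $b \in A$, so $d(x_n,b)\to 0$. The triangle inequality then yields $d(a,b) \le d(a,x_n) + d(x_n,b) \to 0$, whence $d(a,b)=0$, i.e. $a \in [b]_0$. Now invoke the saturation hypothesis: since $b \in A$, we have $[b]_0 \subseteq A$, and therefore $a \in A$. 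This shows the closure of $A$ is contained in $A$, so $A$ is closed.

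The main obstacle, and the place requiring care, is the reverse direction's passage from "$a$ is a limit point of $A$" to "there is an actual sequence in $A$ converging to $a$" in the topology generated by $d$ (Definition~\ref{d2.4}); one must confirm that the pseudometric topology is first-countable so that closure coincides with sequential closure, which follows from the balls $B_{1/n}(a)$ forming a countable neighborhood base at $a$. Once that standard fact is in hand, the argument is driven entirely by the triangle inequality together with the definition \eqref{e2.2} of $[b]_0$, and completeness of $A$ is exactly what supplies the limit point $b \in A$ that lets the saturation hypothesis be applied.
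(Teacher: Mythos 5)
Your proof is correct, and on the substantive direction (saturation implies closedness) it follows essentially the paper's argument: the paper proceeds by contradiction, taking a point \(c \in X \setminus A\) every ball around which meets \(A\), extracting a sequence \((a_n)_{n\in\NN} \subseteq A\) with \(d(a_n, c) \to 0\), using completeness to obtain \(a \in A\) with \(d(a_n, a) \to 0\), the triangle inequality to conclude \(d(c,a) = 0\), and saturation \([a]_0 \subseteq A\) to force the contradiction \(c \in A\). Your direct formulation (closure of \(A\) contained in \(A\)) is the same proof with the contradiction unwound, and your explicit remark about first countability --- choosing \(x_n \in B_{1/n}(a) \cap A\) --- is exactly the step the paper compresses into ``consequently, there is a convergent sequence.'' The only genuine divergence is in the easy direction: you send the constant sequence at \(a\) to its limit \(b \in [a]_0\) and invoke the fact that a closed set contains the limits of its convergent sequences, whereas the paper writes \(X \setminus A\) as a union of open balls, checks via the triangle inequality that every open ball is saturated under \(\coherent{0}\), and passes to complements using Proposition~\ref{p2.2}. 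Your route is shorter and purely sequential; the paper's isolates the topological fact that every open (hence every closed) subset of a pseudometric space is a union of \(\coherent{0}\)-classes, which it then reuses in the proof of Corollary~\ref{c2.8}. Both arguments correctly use completeness only in the saturation-implies-closed direction.
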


\begin{proof}
Let \(A\) be closed in \((X, d)\). Then the set \(X \setminus A\) is open in \((X, d)\) and, consequently, there is \(\mbf{B}^1 \subseteq \mbf{B}_X\) such that
\begin{equation}\label{p2.7:e2}
X \setminus A = \bigcup_{B \in \mbf{B}^1} B.
\end{equation}
Using Proposition~\ref{p2.2}, we see that \eqref{p2.7:e1} holds for every \(a \in A\) if and only if 
\begin{equation}\label{p2.7:e3}
[b]_0 \subseteq X \setminus A
\end{equation}
holds for every \(b \in X \setminus A\). From \eqref{p2.7:e2} it follows that \eqref{p2.7:e3} holds for every \(b \in X \setminus A\) if
\begin{equation}\label{p2.7:e4}
[b]_0 \subseteq B
\end{equation}
holds for every \(b \in B\) and every \(B \in \mbf{B}_X\). Now \eqref{p2.7:e4} can be directly proved by using the definition of open balls and the triangle inequality.

Conversely, let \eqref{p2.7:e1} hold for every \(a \in A\). If \(A\) is not a closed set, then \(X \setminus A\) is not an open set and, consequently, there is \(c \in X \setminus A\) such that
\[
B_r(c) \cap A \neq \varnothing
\]
for every \(r > 0\), where \(B_r(c)\) is the open ball with the center \(c\) and the radius \(r\). Consequently, there is a convergent sequence \(\seq{a_n}{n} \subseteq A\),
\begin{equation}\label{p2.7:e5}
\lim_{n \to \infty} d(a_n, c) = 0.
\end{equation}
Since \(A\) is complete and every convergent sequence is a Cauchy sequence, there is \(a \in A\) such that 
\begin{equation}\label{p2.7:e6}
\lim_{n \to \infty} d(a_n, a) = 0.
\end{equation}
Using the triangle inequality and \eqref{p2.7:e5}--\eqref{p2.7:e6}, we obtain
\[
d(c, a) \leqslant \limsup_{n \to \infty} d(c, a_n) + \limsup_{n \to \infty} d(a_n, a) = 0.
\]
Therefore, \(c \in [a_0]\) holds, that implies \(c \in A\) by \eqref{p2.7:e1}. Thus, we obtain the contradiction \(c \in A \cap (X \setminus A) = \varnothing\).
\end{proof}

\begin{corollary}\label{c2.8}
Let \((X, d)\) be a pseudometric space and let \(A\) be a complete subset of \(X\). Write \(\overline{A}\) for the closure of \(A\). Then \(\overline{A}\) is also a complete subset of \(X\) and the equality 
\begin{equation}\label{c2.8:e0}
\overline{A} = \bigcup_{a \in A} [a]_0
\end{equation}
holds.
\end{corollary}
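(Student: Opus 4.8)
The plan is to establish the set equality \eqref{c2.8:e0} first and then deduce the completeness of \(\ol{A}\) from it. Write \(B := \bigcup_{a \in A} [a]_0\) for the right-hand side of \eqref{c2.8:e0}. Since \(a \in [a]_0\) for every \(a \in A\), we have \(A \subseteq B\), so it remains to trap \(B\) between \(A\) and \(\ol{A}\) and then to squeeze \(\ol{A}\) back into \(B\). For the inclusion \(B \subseteq \ol{A}\) I would argue as in the derivation of \eqref{p2.7:e4}: if \(x \in [a]_0\) with \(a \in A\), then \(d(x, a) = 0\), and the triangle inequality shows that every open ball containing \(x\) also contains \(a\); hence \(x\) is a closure point of \(A\). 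Equivalently, \eqref{p2.7:e4} says that each open ball, and therefore each open set, is a union of \(\coherent{0}\)-classes, so the closed set \(\ol{A}\) is a union of such classes and must contain \([a]_0\) whenever it contains \(a\).

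The reverse inclusion \(\ol{A} \subseteq B\) is the step where completeness of \(A\) is essential, and it is the main point of the argument. Given \(x \in \ol{A}\), the same reasoning as in the second half of the proof of Proposition~\ref{p2.7} produces a sequence \(\seq{a_n}{n} \subseteq A\) with \(\lim_{n \to \infty} d(a_n, x) = 0\). Such a sequence is convergent, hence Cauchy, so completeness of \(A\) yields \(a \in A\) with \(\lim_{n \to \infty} d(a_n, a) = 0\). The triangle inequality then gives
\[
d(x, a) \leqslant \limsup_{n \to \infty} d(x, a_n) + \limsup_{n \to \infty} d(a_n, a) = 0,
\]
so that \(x \in [a]_0 \subseteq B\). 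Combining the two inclusions proves \eqref{c2.8:e0}.

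Finally, to see that \(\ol{A}\) is complete, I would use a ``lifting'' trick based on the now-established equality \(\ol{A} = B\). Let \(\seq{x_n}{n} \subseteq \ol{A}\) be Cauchy; for each \(n\) choose \(a_n \in A\) with \(x_n \in [a_n]_0\), i.e.\ \(d(x_n, a_n) = 0\). The triangle inequality gives \(d(a_m, a_n) \leqslant d(x_m, x_n)\) for all \(m\), \(n\), so \(\seq{a_n}{n}\) is a Cauchy sequence in \(A\), and by completeness it converges to some \(a \in A\). Since \(d(x_n, a) \leqslant d(x_n, a_n) + d(a_n, a) = d(a_n, a) \to 0\) and \(a \in A \subseteq \ol{A}\), the sequence \(\seq{x_n}{n}\) converges in \(\ol{A}\), which is exactly the completeness of \(\ol{A}\).

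The hard part is really the inclusion \(\ol{A} \subseteq B\) together with the completeness assertion: both fail without the hypothesis that \(A\) is complete (for a genuine metric the classes \([a]_0\) are singletons, so \(B = A\), which typically differs from \(\ol{A}\)). The device that makes everything work is that points of \(\ol{A}\) lie at distance \emph{exactly} zero from points of \(A\), which lets one transfer the Cauchy property and limits between \(\ol{A}\) and \(A\) without loss.
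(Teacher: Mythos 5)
Your proof is correct, but its logical organization is inverted relative to the paper's, and the key inclusion is handled by a different mechanism. The paper first proves that \(B := \bigcup_{a \in A} [a]_0\) is \emph{complete} (by the same lifting trick you use at the end), then observes that \(B\) is saturated (\([x]_0 = [a]_0\) for \(x \in [a]_0\)) and invokes Proposition~\ref{p2.7} as a black box to conclude that \(B\) is \emph{closed}; the inclusion \(\overline{A} \subseteq B\) then comes for free because \(B\) is a closed superset of \(A\), the reverse inclusion comes from saturation of closed sets, and completeness of \(\overline{A}\) is inherited from the already-proved completeness of \(B\). You instead prove the equality \(\overline{A} = B\) first, establishing \(\overline{A} \subseteq B\) by a direct sequential argument on closure points of \(A\) (pick \(a_n \in B_{1/n}(x) \cap A\), use completeness of \(A\) and the triangle inequality to get \(d(x,a) = 0\)), which amounts to inlining the converse half of the proof of Proposition~\ref{p2.7} rather than citing the proposition itself; only afterwards do you deduce completeness of \(\overline{A}\) by the lifting trick. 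Both routes rest on the same two devices --- transferring Cauchy sequences and limits across pairs at distance zero, and saturation of open/closed sets under the relation \(d = 0\) --- so neither is deeper than the other. What your order buys is self-containment: you never need to know that \(B\) is complete or closed before the equality is in hand, and the role of the hypothesis on \(A\) is visible exactly where it is used. What the paper's order buys is economy: Proposition~\ref{p2.7} does the topological work once, and the completeness statement of the corollary drops out without a separate argument. Your closing remark that both conclusions fail for non-closed subsets of genuine metric spaces (where \(B = A \neq \overline{A}\)) is a correct and useful sanity check on the necessity of completeness.
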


\begin{proof}
First we note that the set 
\[
\bigcup_{a \in A} [a]_0
\]
is complete. Indeed, if \(\seq{x_n}{n} \subseteq \bigcup_{a \in A} [a]_0\) is a Cauchy sequence, then there is a sequence \(\seq{a_n}{n} \subseteq A\) such that 
\begin{equation}\label{c2.8:e1}
d(x_n, a_n) = 0
\end{equation}
for every \(n \in \NN\). The last equality and the definition of Cauchy sequences imply that \(\seq{a_n}{n}\) is also a Cauchy sequence. Since \(A\) is complete, there exists \(b \in A\) such that
\begin{equation}\label{c2.8:e2}
\lim_{n \to \infty} d(a_n, b) = 0.
\end{equation}
Using \eqref{c2.8:e1} and \eqref{c2.8:e2}, we see that 
\[
\lim_{n \to \infty} d(x_n, b) = 0,
\]
in addition, from 
\begin{equation}\label{c2.8:e3}
A \subseteq \bigcup_{a \in A} [a]_0
\end{equation}
it follows that \(b \in \bigcup_{a \in A} [a]_0\). Thus, \(\seq{x_n}{n}\) is convergent in \(\bigcup_{a \in A} [a]_0\) and, consequently, \(\bigcup_{a \in A} [a]_0\) is complete as required. 

Let us prove~\eqref{c2.8:e0}. Since the equality \([x]_0 = [a]_0\) holds for every \(a \in A\) and every \(x \in [a]_0\), we obtain that the set \(\bigcup_{a \in A} [a]_0\) is closed by Proposition~\ref{p2.7}. The last statement and \eqref{c2.8:e3} imply the inclusion
\[
\overline{A} \subseteq \bigcup_{a \in A} [a]_0,
\]
because every closed superset of \(A\) contains the closure of \(A\). To complete the proof, we recall that 
\[
S = \bigcup_{s \in S} [s]_0
\]
holds for every closed \(S \subseteq X\) (as was shown in the first part of the proof of Proposition~\ref{p2.7}) and, consequently, 
\[
\overline{A} = \bigcup_{b \in \overline{A}} [b]_0 \supseteq \bigcup_{a \in A} [a]_0,
\]
because \(\overline{A} \supseteq A\).
\end{proof}

Corollary~\ref{c2.8} implies the following.

\begin{corollary}\label{c2.9}
Let \((X, d)\) be a pseudometric space and let \(A_1\), \(A_2\) be subsets of \(X\). If the statement:
\begin{enumerate}
\item \label{c2.9:s1} For all \(x_1 \in A_1\) and \(x_2 \in A_2\), there are \(y_1 \in A_1\) and \(y_2 \in A_2\) such that 
\[
d(x_1, y_2) = d(x_2, y_1) = 0
\]
\end{enumerate}
is valid, then the equality
\begin{equation}\label{c2.9:e1}
\overline{A}_1 = \overline{A}_2
\end{equation}
holds. Moreover, if \(A_1\) and \(A_2\) are complete subsets of \(X\), then equality \eqref{c2.9:e1} implies the validity of \ref{c2.9:s1}.
\end{corollary}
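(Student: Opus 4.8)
The plan is to treat the two implications of Corollary~\ref{c2.9} separately, extracting from Corollary~\ref{c2.8} exactly the description of the closures that makes the converse transparent, while observing that the forward implication needs no completeness at all.

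First I would dispose of the forward direction. Assuming statement~\ref{c2.9:s1}, I read it as the conjunction of two one-sided conditions: every \(x_1 \in A_1\) admits some \(y_2 \in A_2\) with \(d(x_1, y_2) = 0\), and every \(x_2 \in A_2\) admits some \(y_1 \in A_1\) with \(d(x_2, y_1) = 0\). The point is that \(d(x_1, y_2) = 0\) forces \(y_2\) to lie in every open ball centred at \(x_1\), so \(x_1\) belongs to the closure of \(A_2\); ranging over \(x_1\) gives \(A_1 \subseteq \overline{A}_2\), and symmetrically \(A_2 \subseteq \overline{A}_1\). Since the closure operator is monotone and idempotent, taking closures yields \(\overline{A}_1 \subseteq \overline{A}_2\) and \(\overline{A}_2 \subseteq \overline{A}_1\), hence~\eqref{c2.9:e1}. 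No appeal to completeness enters here, only the elementary fact that a point at zero distance from a set belongs to its closure.

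For the converse I would invoke Corollary~\ref{c2.8}. Assuming \(A_1\) and \(A_2\) complete and \(\overline{A}_1 = \overline{A}_2\), Corollary~\ref{c2.8} supplies the explicit representations \(\overline{A}_i = \bigcup_{a \in A_i} [a]_0\) for \(i = 1, 2\), where \([a]_0\) is as in~\eqref{e2.2}. Now fix \(x_1 \in A_1\). Then \(x_1 \in \overline{A}_1 = \overline{A}_2 = \bigcup_{a \in A_2} [a]_0\), so there is \(y_2 \in A_2\) with \(x_1 \in [y_2]_0\), that is \(d(x_1, y_2) = 0\). Interchanging the roles of \(A_1\) and \(A_2\), every \(x_2 \in A_2\) yields \(y_1 \in A_1\) with \(d(x_2, y_1) = 0\). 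Combining the two recovers precisely statement~\ref{c2.9:s1}.

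I expect the only real content to sit in the converse, and it has already been isolated in Corollary~\ref{c2.8}: once the closures are written as unions of the classes \([a]_0\), the membership \(x_1 \in \overline{A}_2\) is automatically witnessed by an actual point \(y_2 \in A_2\) at zero distance, which is exactly what~\ref{c2.9:s1} demands. The main subtlety to keep in mind is that this witnessing step is the one place where completeness is indispensable: without it \(\overline{A}_2\) may strictly contain \(\bigcup_{a \in A_2} [a]_0\), and a point of \(A_1 \subseteq \overline{A}_2\) need not be at zero distance from any point of \(A_2\), so~\ref{c2.9:s1} could fail. A minor edge case worth noting is \(A_1 = \varnothing\) or \(A_2 = \varnothing\), which the quantifier structure of~\ref{c2.9:s1} handles consistently with \(\overline{A}_1 = \overline{A}_2 = \varnothing\).
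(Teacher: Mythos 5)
Your proof is correct and takes essentially the same route as the paper's: the forward implication is the elementary topological fact that a point at zero distance from a set lies in its closure (the paper cites Definition~\ref{d2.4} for this), and the converse reads the required witnesses \(y_1, y_2\) off the representation \(\overline{A}_i = \bigcup_{a \in A_i} [a]_0\) from Corollary~\ref{c2.8}, which is precisely where completeness enters. The paper's proof is merely a two-line compression of this same argument, citing \eqref{e2.2} and \eqref{c2.8:e0} for the converse.
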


\begin{proof}
Let \ref{c2.9:s1} hold. Then the equality \eqref{c2.9:e1} follows directly from Definition~\ref{d2.4}. If \(A_1\) and \(A_2\) are complete and \eqref{c2.9:e1} holds, then the validity of \ref{c2.9:s1} follows from \eqref{e2.2} and \eqref{c2.8:e0}.
\end{proof}

\begin{definition}\label{d2.11}
Let \((X, d)\) and \((Y, \rho)\) be pseudometric spaces. A mapping \(\Phi \colon X \to Y\) is \emph{distance preserving} if
\[
\rho(\Phi(x), \Phi(y)) = d(x, y)
\]
holds for all \(x\), \(y \in X\). 
\end{definition}

If \((X, d)\) and \((Y, \rho)\) are metric spaces, then the distance preserving mappings \(\Phi \colon X \to Y\) are usually called \emph{isometric embeddings} of \(X\) in \(Y\). If \(\Phi\) is a bijective isometric embedding, then we call this mapping an \emph{isometry} of \((X, d)\) and \((Y, \rho)\). Two metric spaces are \emph{isometric} if there is an isometry of these spaces.

The concept of isometry can be extended to pseudometric spaces in various non-equivalent ways. We will now give one of these generalizations that seems most appropriate for our purpose.

\begin{definition}\label{d2.5}
Let \((X, d)\) and \((Y, \rho)\) be pseudometric spaces. A mapping \(\Phi \colon X \to Y\) is a \emph{pseudoisometry} of \((X, d)\) and \((Y, \rho)\) if:
\begin{enumerate}
\item \label{d2.11:s1} \(\Phi\) is distance preserving;
\item \label{d2.11:s2} for every \(u \in Y\) there is \(v \in X\) such that \(\rho(\Phi(v), u) = 0\).
\end{enumerate}
\end{definition}

\begin{example}\label{ex2.3}
Let \(X\) be a set, \((Y, \rho)\) be a metric space and let \(\Phi \colon X \to Y\) be a surjective mapping. Then the function \(d \colon X^2 \to \RR\), defined by 
\[
d(x, y) := \rho(\Phi(x), \Phi(y)), \quad x, y \in X, 
\]
is a pseudometric on \(X\) and the mapping \(\Phi\) is a pseudoisometry of the pseudometric space \((X, d)\) and the metric space \((Y, \rho)\).
\end{example}

\begin{example}\label{ex2.5}
Let \((Y, \rho)\) be a pseudometric space and let \((Y / \coherent{0}, \delta_{\rho})\) be the metric identification of \((Y, \rho)\). Then the natural projection \(\pi_Y \colon Y \to Y / \coherent{0}\),
\begin{equation}\label{ex2.5:e1}
\pi_Y(y) = \{x \in Y \colon \rho(x, y) = 0\}, \quad y \in Y,
\end{equation}
is a pseudoisometry of the pseudometric space \((Y, \rho)\) and the metric space \((Y / \coherent{0}, \delta_{\rho})\).
\end{example}

\begin{example}\label{ex2.6}
Let \((Y, \rho)\), \((Y / \coherent{0}, \delta_{\rho})\) and \(\pi_Y\) be defined as in Example~\ref{ex2.5}. Then the Axiom of Choice implies the existence of a function \(\Psi \colon Y / \coherent{0} \to Y\) such that \(\pi_Y(\Psi(\alpha)) = \alpha\) for every \(\alpha \in Y / \coherent{0}\). Using Definition~\ref{d2.5} and equalities~\eqref{ch2:p1:e1}--\eqref{ex2.5:e1} (with \(\delta_d = \delta_{\rho}\) and \((X, d) = (Y, \rho)\)) we can easily show that \(\Psi\) is a pseudoisometry of \((Y / \coherent{0}, \delta_{\rho})\) and \((Y, \rho)\).
\end{example}

The following properties of pseudoisometries are easy to prove.

\begin{proposition}\label{p2.5}
Let \((X, d)\) and \((Y, \rho)\) be pseudometric spaces. Then the following statements hold for every pseudoisometry \(\Phi \colon X \to Y\):
\begin{enumerate}
\item \label{p2.5:s1} If \((X, d)\) and \((Y, \rho)\) are metric spaces, then \(\Phi\) is an isometry.
\item \label{p2.5:s2} If \((Y, \rho)\) is a metric space, then \(\Phi\) is a surjection.
\item \label{p2.5:s3} If \((X, d)\) is a metric space, then \(\Phi\) is an injection.
\item \label{p2.5:s4} \(\Phi\) is a homeomorphism of the topological spaces \(X\) and \(Y\) (endowed with topologies generated by \(d\) and \(\rho\), respectively) if and only if \(\Phi\) is bijective. 
\end{enumerate}
\end{proposition}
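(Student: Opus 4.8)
The plan is to dispatch statements (ii) and (iii) directly from Definition~\ref{d2.5} using the positivity axiom of the relevant metric, to obtain (i) as their immediate combination, and then to treat (iv) by a separate continuity argument, which is the only part requiring a genuine idea.

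First I would prove (ii). Let \((Y, \rho)\) be a metric space and fix an arbitrary \(u \in Y\). The second condition in Definition~\ref{d2.5} furnishes a point \(v \in X\) with \(\rho(\Phi(v), u) = 0\); since \(\rho\) is a genuine metric, its positivity property forces \(\Phi(v) = u\), so \(u\) lies in the image of \(\Phi\). As \(u\) was arbitrary, \(\Phi\) is surjective. For (iii), let \((X, d)\) be a metric space and suppose \(\Phi(x) = \Phi(y)\). Then distance preservation gives \(d(x, y) = \rho(\Phi(x), \Phi(y)) = 0\), and the positivity of \(d\) yields \(x = y\); hence \(\Phi\) is injective. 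Statement (i) is then immediate: if both spaces are metric, (ii) and (iii) make \(\Phi\) a distance-preserving bijection, which is by definition an isometry.

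The substantive part is (iv). The ``only if'' direction is trivial, since a homeomorphism is bijective by definition, so the work lies in the converse. The key lemma I would establish is that \emph{every} distance-preserving map between pseudometric spaces is continuous. Indeed, given an open ball \(B_r(c) \subseteq Y\) and a point \(x\) with \(\Phi(x) \in B_r(c)\), put \(s = r - \rho(c, \Phi(x)) > 0\); for any \(x'\) with \(d(x, x') < s\) one has \(\rho(\Phi(x), \Phi(x')) = d(x, x') < s\), so by the triangle inequality \(\rho(c, \Phi(x')) \leqslant \rho(c, \Phi(x)) + \rho(\Phi(x), \Phi(x')) < r\). Thus \(B_s(x) \subseteq \Phi^{-1}(B_r(c))\), which shows the preimage of each open ball is open, and continuity follows because the open balls form a base of the generated topology (Definition~\ref{d2.4}).

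It remains to apply this lemma to \(\Phi^{-1}\). If \(\Phi\) is bijective, then its inverse is again distance preserving: writing \(u = \Phi(x)\) and \(v = \Phi(y)\), which is legitimate by bijectivity, we get \(\rho(u, v) = d(x, y) = d(\Phi^{-1}(u), \Phi^{-1}(v))\). Hence \(\Phi^{-1}\) is continuous by the same lemma, and therefore \(\Phi\) is a homeomorphism. I expect the continuity lemma to be the main (and only mildly technical) obstacle; every other step is a direct appeal to the positivity axiom of a metric or to the definition of a homeomorphism. One point I would keep an eye on is that the topology here is merely generated by a pseudometric and need not be Hausdorff, but this causes no difficulty, as the continuity argument uses only the triangle inequality and the distance-preserving identity.
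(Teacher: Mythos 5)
Your proof is correct. Note that the paper itself offers no proof of this proposition at all --- it is stated with only the remark that these properties ``are easy to prove'' --- so your argument simply supplies the omitted details, and it does so in the expected way: positivity of the metric \(\rho\) for \ref{p2.5:s2}, positivity of \(d\) plus distance preservation for \ref{p2.5:s3}, their combination for \ref{p2.5:s1}, and for \ref{p2.5:s4} the observation that distance-preserving maps between pseudometric spaces are continuous (your ball argument uses only the triangle inequality, so it is indeed valid without any Hausdorff assumption), applied to both \(\Phi\) and its inverse, which is again distance preserving once \(\Phi\) is bijective.
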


\begin{lemma}\label{l2.8}
Let \((X, d)\), \((Y, \rho)\) and \((Z, \delta)\) be pseudometric spaces and let \(\Phi \colon X \to Y\) and \(\Psi \colon Y \to Z\) be pseudoisometries. Then the mapping
\[
X \xrightarrow{\Phi} Y \xrightarrow{\Psi} Z
\]
is also a pseudoisometry.
\end{lemma}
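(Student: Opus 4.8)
The plan is to verify the two defining conditions of Definition~\ref{d2.5} for the composite mapping $\Psi \circ \Phi \colon X \to Z$. First I would dispatch the distance-preserving requirement~\ref{d2.11:s1}, which costs nothing: for all $x$, $y \in X$, chaining the distance-preserving equalities for $\Psi$ and then for $\Phi$ gives $\delta(\Psi(\Phi(x)), \Psi(\Phi(y))) = \rho(\Phi(x), \Phi(y)) = d(x, y)$, so $\Psi \circ \Phi$ satisfies~\ref{d2.11:s1}.

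The only part carrying any content is condition~\ref{d2.11:s2}. I would fix an arbitrary $w \in Z$ and build a preimage in two stages. Since $\Psi$ is a pseudoisometry, there is $y \in Y$ with $\delta(\Psi(y), w) = 0$; since $\Phi$ is a pseudoisometry, there is $v \in X$ with $\rho(\Phi(v), y) = 0$. I claim this $v$ is the point required by~\ref{d2.11:s2}. The key step is to transport the relation $\rho(\Phi(v), y) = 0$ forward to $Z$ through the distance-preserving property of $\Psi$, which yields $\delta(\Psi(\Phi(v)), \Psi(y)) = \rho(\Phi(v), y) = 0$. A single application of the triangle inequality for $\delta$ then closes the argument: $\delta(\Psi(\Phi(v)), w) \leqslant \delta(\Psi(\Phi(v)), \Psi(y)) + \delta(\Psi(y), w) = 0$, whence $\delta((\Psi \circ \Phi)(v), w) = 0$.

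I do not expect any genuine obstacle here. The whole proof is the interplay between the approximate-surjectivity conditions of the two maps and the distance-preserving property of the outer map $\Psi$, glued together by one use of the triangle inequality; the only mild point is to remember to push the zero distance $\rho(\Phi(v), y) = 0$ from $Y$ into $Z$ via $\Psi$ before invoking the triangle inequality. With both conditions of Definition~\ref{d2.5} verified, $\Psi \circ \Phi$ is a pseudoisometry of $(X, d)$ and $(Z, \delta)$.
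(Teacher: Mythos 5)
Your proof is correct and is precisely the argument the paper has in mind: the paper's proof of Lemma~\ref{l2.8} consists of the single line ``It follows directly from Definition~\ref{d2.5},'' and your two-step verification (chaining the distance-preserving equalities, then transporting $\rho(\Phi(v),y)=0$ into $Z$ via $\Psi$ and closing with one triangle inequality) is exactly that direct verification written out. No gaps, and no divergence from the paper's approach.
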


\begin{proof}
It follows directly from Definition~\ref{d2.5}.
\end{proof}

We say that two pseudometric spaces are \emph{pseudoisometric} if there is a pseudoisometry of these spaces.

\begin{theorem}\label{t2.10}
Let \((X, d)\) and \((Y, \rho)\) be pseudometric spaces. Then \((X, d)\) and \((Y, \rho)\) are pseudoisometric if and only if the metric identifications \((X / \coherent{0}, \delta_d)\) and \((Y / \coherent{0}, \delta_\rho)\) are isometric metric spaces.
\end{theorem}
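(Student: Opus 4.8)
The plan is to avoid constructing the isometry by hand and instead to assemble it from the building blocks already prepared in Examples~\ref{ex2.5} and~\ref{ex2.6}, glued together by the composition Lemma~\ref{l2.8}. The key observations are three. First, the natural projections $\pi_X\colon X \to X/{\coherent{0}}$ and $\pi_Y\colon Y \to Y/{\coherent{0}}$ are pseudoisometries by Example~\ref{ex2.5}. Second, each metric identification admits a section, that is, pseudoisometries $\Psi_X\colon X/{\coherent{0}} \to X$ and $\Psi_Y\colon Y/{\coherent{0}} \to Y$, by Example~\ref{ex2.6}. Third, directly from Definition~\ref{d2.5}, every isometry between two metric spaces is a pseudoisometry: condition~\ref{d2.11:s1} is distance preservation, and condition~\ref{d2.11:s2} follows from surjectivity. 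With these in hand, the whole statement reduces to running pseudoisometries around a diagram.

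For the forward direction, suppose $\Phi\colon X \to Y$ is a pseudoisometry. I would form the composition
\[
X/{\coherent{0}} \xrightarrow{\Psi_X} X \xrightarrow{\Phi} Y \xrightarrow{\pi_Y} Y/{\coherent{0}},
\]
in which each arrow is a pseudoisometry by Example~\ref{ex2.6}, the hypothesis, and Example~\ref{ex2.5}, respectively. Iterating Lemma~\ref{l2.8}, the composite is a pseudoisometry between $(X/{\coherent{0}}, \delta_d)$ and $(Y/{\coherent{0}}, \delta_\rho)$. Since both of these are genuine metric spaces by Proposition~\ref{ch2:p1}, Proposition~\ref{p2.5}\ref{p2.5:s1} upgrades it to an isometry, as required.

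For the converse, assume $f\colon X/{\coherent{0}} \to Y/{\coherent{0}}$ is an isometry. Viewing $f$ as a pseudoisometry via the third observation above, I would form
\[
X \xrightarrow{\pi_X} X/{\coherent{0}} \xrightarrow{f} Y/{\coherent{0}} \xrightarrow{\Psi_Y} Y,
\]
again a chain of pseudoisometries (Example~\ref{ex2.5}, the isometry $f$, and Example~\ref{ex2.6}). By two applications of Lemma~\ref{l2.8} its composite is a pseudoisometry of $(X,d)$ and $(Y,\rho)$, which is exactly what must be produced.

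I do not expect a genuine obstacle: the conceptual content has been front-loaded into Examples~\ref{ex2.5}--\ref{ex2.6} and Lemma~\ref{l2.8}, leaving only bookkeeping. The two points that warrant care are checking the orientation of each arrow, so that domains and codomains line up and the composite runs the intended way, and recalling that the sections $\Psi_X$, $\Psi_Y$ of Example~\ref{ex2.6} exist only through the Axiom of Choice. Should one prefer to dispense with the sections, an equally short direct route is available in the forward direction: set $\tilde\Phi([x]_0) := [\Phi(x)]_0$ and verify in one line apiece that it is well defined and distance preserving (both from $\rho(\Phi(x),\Phi(y)) = d(x,y)$), surjective (from condition~\ref{d2.11:s2}), and injective (from distance preservation). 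The composition argument, however, is cleaner and reuses the already established lemmas without recomputation.
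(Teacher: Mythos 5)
Your proof is correct, and your converse direction coincides with the paper's own argument (the paper likewise composes $X \xrightarrow{\pi_X} X/{\coherent{0}} \xrightarrow{F} Y/{\coherent{0}} \xrightarrow{\Psi} Y$ and cites Example~\ref{ex2.6} and Lemma~\ref{l2.8}). Your forward direction, however, is genuinely different. The paper does not use the section $\Psi_X$ at all in that direction: it constructs the induced quotient map $F \colon X/{\coherent{0}} \to Y/{\coherent{0}}$ explicitly, first checking that $F$ is well defined (the implication $\pi_X(x_1)=\pi_X(x_2) \Rightarrow \pi_Y(\Phi(x_1))=\pi_Y(\Phi(x_2))$), then that $F$ is distance preserving via the chain of equalities $\delta_d(\pi_X(x_1),\pi_X(x_2)) = d(x_1,x_2) = \rho(\Phi(x_1),\Phi(x_2)) = \delta_\rho(\pi_Y(\Phi(x_1)),\pi_Y(\Phi(x_2)))$, and finally that $F$ is surjective because $\pi_Y \circ \Phi$ is a pseudoisometry onto a metric space, hence surjective by statement~\ref{p2.5:s2} of Proposition~\ref{p2.5}. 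Your route --- composing $\Psi_X$, $\Phi$, $\pi_Y$ and upgrading the resulting pseudoisometry between metric spaces to an isometry by statement~\ref{p2.5:s1} of Proposition~\ref{p2.5} --- is shorter and requires no new computation; this is essentially the same economy the paper itself exploits only in the converse. What the paper's hands-on construction buys in exchange is twofold: it avoids the Axiom of Choice in the forward direction (your use of Example~\ref{ex2.6} there imports it), and, more importantly, it produces the \emph{canonical} map $F$ satisfying $F \circ \pi_X = \pi_Y \circ \Phi$, which is precisely what is extracted afterwards as Corollary~\ref{c2.13} (``analyzing the proof of Theorem~\ref{t2.10}'') and then used in the proof of Proposition~\ref{p1.3.1}. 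Your isometry $\pi_Y \circ \Phi \circ \Psi_X$ does in fact make the diagram commute, but you neither claim nor verify this, so your proof as written would not support that corollary without an extra argument. Your closing remark sketching the direct definition $\tilde\Phi([x]_0) := [\Phi(x)]_0$ is, in substance, the paper's proof.
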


\begin{proof}
Suppose that \((X, d)\) and \((Y, \rho)\) are pseudoisometric. We will prove that \((X / \coherent{0}, \delta_d)\) and \((Y / \coherent{0}, \delta_{\rho})\) are isometric. 

Write \(\Phi \colon X \to Y\) for a pseudoisometry of \((X, d)\) and \((Y, \rho)\). Let \(\pi_X \colon X \to X / \coherent{0}\) and \(\pi_Y \colon Y \to Y / \coherent{0}\) be the natural projection of \(X\) and \(Y\) on the quotient sets \(X / \coherent{0}\) and \(Y / \coherent{0}\), respectively, i.e.,
\begin{align}\label{p2.6:e1}
\pi_X (x) &= \{z \in X \colon d(x, z) = 0\},\\
\label{p2.6:e2}
\pi_Y (y) &= \{t \in Y \colon \rho(t, y) = 0\}
\end{align}
hold for all \(x \in X\) and \(y \in Y\). 

We claim that there is a mapping \(F \colon X / \coherent{0} \to Y / \coherent{0}\) such that the diagram
\begin{equation}\label{p2.6:e3}
\ctdiagram{
\ctv 0,30: {X} 
\ctv 120,30: {Y}
\ctv 0,-30: {X / \coherent{0}} 
\ctv 120,-30: {Y / \coherent{0}}
\ctet 0,30,120,30:{\Phi}
\ctet 0,-30,120,-30:{F}
\ctel 0,30, 0,-30:{\pi_X}
\cter 120,30, 120,-30:{\pi_Y}
}
\end{equation}
is commutative, i.e., \(F \circ \pi_X = \pi_Y \circ \Phi\) holds. A desired \(F\) can be found if and only if the implication
\begin{equation}\label{p2.6:e4}
\bigl(\pi_X(x_1) = \pi_X(x_2)\bigr) \Rightarrow \bigl(\pi_Y(\Phi(x_1)) = \pi_Y(\Phi(x_2))\bigr)
\end{equation}
is valid for all \(x_1\), \(x_2 \in X\). Let us prove the validity of \eqref{p2.6:e4}. If we have \(\pi_X(x_1) = \pi_X(x_2)\), then \eqref{p2.6:e1} implies \(d(x_1, x_2) = 0\). The last equality and statement \ref{d2.11:s1} from Definition~\ref{d2.5} give us the equality
\begin{equation}\label{p2.6:e5}
\rho(\Phi(x_1), \Phi(x_2)) = 0.
\end{equation}
Now from \eqref{p2.6:e5} and \eqref{p2.6:e2} it follows that \(\pi_Y(\Phi(x_1)) = \pi_Y(\Phi(x_2))\). Thus, there is \(F \colon X / \coherent{0} \to Y / \coherent{0}\) such that diagram \eqref{p2.6:e3} is commutative.

Let \(F \colon X / \coherent{0} \to Y / \coherent{0}\) satisfy 
\begin{equation}\label{p2.6:e10}
F \circ \pi_X = \pi_Y \circ \Phi
\end{equation}
and let \(\alpha\), \(\beta\) be arbitrary points of \(X / \coherent{0}\). Then, by \eqref{e1.1.5}, we have
\begin{equation}\label{p2.6:e6}
\delta_d (\alpha, \beta) = d(x_1, x_2)
\end{equation}
for all \(x_1 \in \alpha\) and \(x_2 \in \beta\). Using \eqref{p2.6:e1} we can rewrite \eqref{p2.6:e6} as
\begin{equation}\label{p2.6:e7}
\delta_d (\pi_X(x_1), \pi_X(x_2)) = d(x_1, x_2).
\end{equation}
The last equality and statement \ref{d2.11:s1} from Definition~\ref{d2.5} imply
\begin{equation}\label{p2.6:e8}
d(x_1, x_2) = \rho (\Phi(x_1), \Phi(x_2)).
\end{equation}
Now using Proposition~\ref{ch2:p1} with \(X = Y\) and \(d = \rho\), we obtain 
\begin{equation}\label{p2.6:e9}
\rho (\Phi(x_1), \Phi(x_2)) = \delta_{\rho}(\pi_Y(\Phi(x_1)), \pi_Y(\Phi(x_2))).
\end{equation}
Consequently, we have 
\begin{equation}\label{p2.6:e11}
\delta_d (\pi_X(x_1), \pi_X(x_2)) = \delta_{\rho}(\pi_Y(\Phi(x_1)), \pi_Y(\Phi(x_2))).
\end{equation}
Using \eqref{p2.6:e10} we can rewrite \eqref{p2.6:e11} as
\[
\delta_d (\pi_X(x_1), \pi_X(x_2)) = \delta_{\rho}(F(\pi_X(x_1)), F(\pi_X(x_2))).
\]
Thus, we have
\[
\delta_d (\alpha, \beta) = \delta_{\rho} (F(\alpha), F(\beta))
\]
for all \(\alpha\), \(\beta \in X / \coherent{0}\). Consequently, \(F\) is an isometric embedding of the metric space \((X/\coherent{0}, \delta_d)\) in the metric space \((Y/\coherent{0}, \delta_{\rho})\). Moreover, by Example~\ref{ex2.5}, the mapping \(\pi_Y \colon Y \to Y / \coherent{0}\) is a pseudoisometry and, consequently, 
\[
X \xrightarrow{\Phi} Y \xrightarrow{\pi_Y} Y / \coherent{0}
\]
is a pseudoisometry by Lemma~\ref{l2.8}. Since \((Y/\coherent{0}, \delta_{\rho})\) is a metric space, the pseudoisometry \(X \xrightarrow{\Phi} Y \xrightarrow{\pi_Y} Y / \coherent{0}\) is surjective by Statement~\ref{p2.5:s2} of Proposition~\ref{p2.5}. Using equality~\eqref{p2.6:e10}, we obtain that \(F \circ \pi_X\) is also surjective, that implies the surjectivity of \(F\). Every surjective isometric embedding is an isometry. Thus, \(F\) is an isometry of \((X/\coherent{0}, \delta_d)\) and \((Y/\coherent{0}, \delta_{\rho})\).

Conversely, let \((X/\coherent{0}, \delta_d)\) and \((Y/\coherent{0}, \delta_{\rho})\) be isometric metric spaces. Write \(F \colon X/\coherent{0} \to Y/\coherent{0}\) for an isometry of these spaces. Let \(\pi_X \colon X \to X/\coherent{0}\) be the natural projection defined by \eqref{p2.6:e1} and let \(\Psi \colon Y/\coherent{0} \to Y\) be defined as in Example~\ref{ex2.6}. Then, using the same example and Lemma~\ref{l2.8}, we see that the mapping 
\[
X \xrightarrow{\pi_X} X/\coherent{0} \xrightarrow{F} Y/\coherent{0} \xrightarrow{\Psi} Y
\]
is a pseudoisometry of \((X, d)\) and \((Y, \rho)\).
\end{proof}

\begin{corollary}\label{c2.7}
The relation ``to be pseudoisometric'' is an equivalence relation on the class of all pseudometric spaces.
\end{corollary}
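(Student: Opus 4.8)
The plan is to reduce everything to a statement about isometries of metric spaces by means of Theorem~\ref{t2.10}. That theorem asserts that two pseudometric spaces are pseudoisometric precisely when their metric identifications are isometric. Since ``to be isometric'' is the familiar equivalence relation on the class of metric spaces, each of the three defining properties of an equivalence relation transports back to ``to be pseudoisometric'' through the equivalence supplied by Theorem~\ref{t2.10}.

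Concretely, I would argue as follows. For \emph{reflexivity}, the identity map on $(X/\coherent{0}, \delta_d)$ is an isometry, so this metric identification is isometric to itself; by Theorem~\ref{t2.10} the pseudometric space $(X, d)$ is then pseudoisometric to itself. For \emph{transitivity}, suppose $(X, d)$ is pseudoisometric to $(Y, \rho)$ and $(Y, \rho)$ is pseudoisometric to $(Z, \delta)$; then the metric identifications of $(X,d)$ and $(Y,\rho)$ are isometric, as are those of $(Y,\rho)$ and $(Z,\delta)$, and composing the two witnessing isometries gives an isometry of the metric identifications of $(X,d)$ and $(Z,\delta)$, whence Theorem~\ref{t2.10} returns a pseudoisometry. (Alternatively, transitivity is immediate from Lemma~\ref{l2.8}, which already records that the composition of two pseudoisometries is a pseudoisometry.) For \emph{symmetry}, if a bijective isometry $F$ witnesses that the metric identifications of $(X,d)$ and $(Y,\rho)$ are isometric, then $F^{-1}$ is again a bijective isometry, so isometry of the metric identifications is symmetric, and Theorem~\ref{t2.10} transfers this symmetry back to the pseudometric spaces themselves.

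The step I expect to be the genuine obstacle is \emph{symmetry}, and this is exactly why routing through Theorem~\ref{t2.10} pays off. A pseudoisometry $\Phi \colon X \to Y$ need not be a bijection: by Statements~\ref{p2.5:s2} and \ref{p2.5:s3} of Proposition~\ref{p2.5} it is guaranteed to be surjective or injective only when the target or source is an honest metric space, so in general there is no way to ``invert'' $\Phi$ directly and produce a pseudoisometry $Y \to X$. Passing to the metric identification collapses each class $[a]_0$ to a single point and removes precisely this difficulty, since on the quotients the induced map is a genuine bijective isometry whose inverse is available at once. Reflexivity and transitivity, by contrast, admit straightforward direct proofs (the identity map and Lemma~\ref{l2.8}), so the only real work lies in the symmetric case, which Theorem~\ref{t2.10} dispatches immediately.
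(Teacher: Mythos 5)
Your proof is correct and follows exactly the paper's route: the paper's own proof is the single observation that ``to be isometric'' is an equivalence relation on metric spaces, with Theorem~\ref{t2.10} doing the transfer in both directions, just as you do. Your expanded discussion (in particular the remark that symmetry is the step where a direct argument fails, since a pseudoisometry need not be invertible) is a faithful unpacking of what the paper leaves implicit.
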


\begin{proof}
The relation ``to be isometric'' is an equivalence relation on the class of all metric spaces.
\end{proof}

Analyzing the proof of Theorem~\ref{t2.10}, we obtain the following.

\begin{corollary}\label{c2.13}
Let \(\Phi \colon X \to Y\) be a pseudoisometry of pseudometric spaces \((X, d)\) and \((Y, \rho)\). If \(F \colon X / {\coherent{0}} \to Y / {\coherent{0}}\) is a mapping such that diagram~\eqref{p2.6:e3} is commutative, then \(F\) is an isometry of the metric spaces \((X / {\coherent{0}}, \delta_d)\) and \((Y / {\coherent{0}}, \delta_{\rho})\).
\end{corollary}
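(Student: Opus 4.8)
The plan is to observe that the second half of the proof of Theorem~\ref{t2.10} already establishes precisely this statement for an \emph{arbitrary} $F$ obeying the commutativity relation, so no genuinely new computation is needed; the corollary is obtained by isolating which hypotheses that part of the argument actually used.

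First I would record that the chain of equalities running from \eqref{p2.6:e6} to \eqref{p2.6:e11} in the proof of Theorem~\ref{t2.10} was derived using only the definition~\eqref{e1.1.5} of $\delta_d$ and $\delta_{\rho}$, the definitions~\eqref{p2.6:e1}--\eqref{p2.6:e2} of the projections $\pi_X$, $\pi_Y$, and the distance-preserving property (statement~\ref{d2.11:s1} of Definition~\ref{d2.5}) of $\Phi$. Crucially, the explicit construction of $F$ was never invoked: the only property of $F$ used was the commutativity $F \circ \pi_X = \pi_Y \circ \Phi$, and that only in the final rewriting step. Hence, for the given $F$ satisfying diagram~\eqref{p2.6:e3}, the same computation yields
\[
\delta_d(\alpha, \beta) = \delta_{\rho}(F(\alpha), F(\beta))
\]
for all $\alpha, \beta \in X / \coherent{0}$, so that $F$ is an isometric embedding of $(X / \coherent{0}, \delta_d)$ into $(Y / \coherent{0}, \delta_{\rho})$.

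It then remains to establish surjectivity of $F$, which I would do by reusing the surjectivity argument at the very end of the proof of Theorem~\ref{t2.10}. By Example~\ref{ex2.5} the projection $\pi_Y$ is a pseudoisometry, so the composite $X \xrightarrow{\Phi} Y \xrightarrow{\pi_Y} Y / \coherent{0}$ is a pseudoisometry by Lemma~\ref{l2.8}. Since $(Y / \coherent{0}, \delta_{\rho})$ is a metric space, Statement~\ref{p2.5:s2} of Proposition~\ref{p2.5} makes this composite surjective, and the commutativity $F \circ \pi_X = \pi_Y \circ \Phi$ forces $F \circ \pi_X$, hence $F$ itself, to be surjective. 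A surjective isometric embedding of metric spaces is an isometry, so $F$ is an isometry of $(X / \coherent{0}, \delta_d)$ and $(Y / \coherent{0}, \delta_{\rho})$.

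I do not expect a substantive obstacle here, since Theorem~\ref{t2.10} has already carried out all the analytical work. The one point requiring care is the logical observation that the isometric-embedding computation inside that proof never depends on how $F$ was built, only on the commuting square; this is exactly what upgrades the theorem (where $F$ was constructed) to the corollary (where $F$ is any map making the square commute).
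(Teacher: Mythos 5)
Your proposal is correct and follows exactly the paper's intended argument: the paper proves Corollary~\ref{c2.13} precisely by the remark ``Analyzing the proof of Theorem~\ref{t2.10}, we obtain the following,'' i.e., by observing (as you do) that the chain \eqref{p2.6:e6}--\eqref{p2.6:e11} uses only the commutativity \(F \circ \pi_X = \pi_Y \circ \Phi\) and never the particular construction of \(F\), and that the surjectivity argument via Example~\ref{ex2.5}, Lemma~\ref{l2.8} and statement~\ref{p2.5:s2} of Proposition~\ref{p2.5} applies verbatim. No gaps; this is the same proof, spelled out.
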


\begin{remark}\label{r2.11}
John Kelley \cite{Kelley1975} define the isometries of pseudometric spaces \(X\) and \(Y\) as the distance preserving surjective mappings \(X \to Y\). This concept is a generalization of the concept of the usual isometries of metric spaces and it is clear that every isometry of pseudometric spaces in Kelley's sense is a pseudoisometry of these spaces. Another generalization of the concept of isometry to pseudoisometric spaces was given in~\cite{DLAMH2020, DovBBMSSS2020}.
\end{remark}

\section{Asymptotically pretangent and tangent spaces. Initial definitions}
\label{sec3}

Let $\tr=(r_n)_{n\in\NN}$ be a sequence of positive real numbers with $\lim_{n\to\infty} r_{n} = \infty$. In what follows $\tr$ will be called a \emph{scaling sequence}. Moreover, the formula $\seq{x_n}{n} \subset A$ will be mean that all elements of the sequence $\seq{x_n}{n}$ belong to the set~$A$.

\begin{definition}\label{d1.1.1}
Let \((X, d)\) be an unbounded metric space. Two sequences $\tx = \seq{x_n}{n} \subset X$ and $\ty = \seq{y_n}{n}\subset X$ are \emph{mutually stable} with respect to a scaling sequence $\tr=\seq{r_n}{n}$ if there is a finite limit
\begin{equation}\label{e1.1.1}
\lim_{n\to\infty}\frac{d(x_n,y_n)}{r_n} := d_{\tr}(\tx,\ty).
\end{equation}
\end{definition}

If \((X, d)\) is an unbounded metric space and \(\tr\) is a scaling sequence, then we denote by $Seq(X, \tr)$ the set of all sequences $\tx = \seq{x_n}{n} \subset X$ for which there is a finite limit
\begin{equation}\label{e1.1.2}
\lim_{n\to\infty} \frac{d(x_n, p)}{r_n} := \Dist{x},
\end{equation}
where \(p\) is a fixed point of \(X\).

\begin{remark}\label{r1.1.3}
The triangle inequality implies that the set $Seq(X, \tr)$ is invariant with respect to the choosing $p\in X$ in \eqref{e1.1.2}.
\end{remark}

\begin{definition}\label{d1.1.2}
Let \((X, d)\) be an unbounded metric space and \(\tr\) be a scaling sequence. A set $F\subseteq Seq(X, \tr)$ is \emph{self-stable} if any two $\tx, \ty \in F$ are mutually stable. $F$ is \emph{maximal self-stable} if it is self-stable and, for arbitrary $\ty\in Seq(X, \tr)$, we have either $\ty\in F$ or there is $\tx\in F$ such that $\tx$ and $\ty$ are not mutually stable.
\end{definition}

The maximal self-stable subsets of $Seq(X, \tr)$ will be denoted by $\sstable{X}{r}$. By Zorn's lemma, for every self-stable \(A \subseteq Seq(X, \tr)\), there is \(\sstable{X}{r}\) such that \(A \subseteq \sstable{X}{r}\).

Let \(\sstable{X}{r}\) be a maximal self-stable subset of \(Seq(X, \tr)\) and let $d_{\tr} \colon \sstable{X}{r} \times \sstable{X}{r} \rightarrow \RR$ be defined by \eqref{e1.1.1}. Obviously, $d_{\tr}$ is a symmetric non-negative function and \(d_{\tr} (\tx, \tx) = 0\) holds for every \(\tx \in \tX_{\infty, \tr}\). Moreover, the triangle inequality for $d$ gives us the triangle inequality for \(d_{\tr}\),
$$
d_{\tr}(\tx,\ty) \leq d_{\tr} (\tx,\tz) + d_{\tr} (\tz,\ty)
$$
whenever \(\tx\), \(\ty\), \(\tz \in \sstable{X}{r}\). Hence, $(\sstable{X}{r},d_{\tr})$ is a pseudometric space.

Now we are ready to introduce the main object of our research.

\begin{definition}\label{d1.1.4}
Let $(X,d)$ be an unbounded metric space and let \(\tr\) be a scaling sequence. The metric identifications of pseudometric spaces \((\sstable{X}{r}, d_{\tr})\) will be denoted by \((\pretan{X}{r}, \rho)\) and called the \emph{asymptotically pretangent spaces} to \((X, d)\) with respect to \(\tr\).

A metric space \((Y, \Delta)\) is \emph{asymptotically pretangent} to \((X, d)\) if there are \(\tr\) and \(\pretan{X}{r}\) such that \((Y, \Delta) = (\pretan{X}{r}, \rho)\).
\end{definition}

Let \(\tr = (r_n)_{n \in \NN}\) be a scaling sequence. As usual, a subsequence \((b_k)_{k \in \NN}\) of \(\tr\) is a sequence defined by \(b_k = r_{n_k}\), where \(n_1 < n_2 < \ldots\) is a strictly increasing sequence of indexes. It is clear that every subsequence of a scaling sequence is also a scaling sequence.

Let $\tr' = \seq{r_{n_k}}{k}$ be a subsequence of a scaling sequence $\tr$. Then, for every $\tx=\seq{x_n}{n}\in Seq(X, \tr)$, we write $\tx' := (x_{n_k})_{k\in\NN}$. It is easy to see that we have
\[
\{\tx' \colon \tx\in Seq(X, \tr)\}\subseteq Seq(X, \tr')
\]
and
\begin{equation}\label{e3.3}
\Dist[r']{x'} = \Dist{x}
\end{equation}
for every $\tx\in Seq(X, \tr)$. Moreover, if sequences $\ty$, $\tz \in Seq(X, \tr)$ are mutually stable with respect to $\tr$, then $\ty'$ and $\tz'$ are mutually stable with respect to $\tr'$ and
\begin{equation}\label{e1.1.6}
d_{\tr}(\ty, \tz)=d_{\tr'}(\ty', \tz')
\end{equation}
holds. Consequently, by Zorn's lemma, for every $\sstable{X}{r}\subseteq Seq(X, \tr)$, there is $\tilde{X}_{\infty, \tr'} \subseteq Seq(X, \tr')$ such that
\begin{equation}\label{e1.1.7}
\{\tx':\tx \in \tilde{X}_{\infty,\tr}\}\subseteq \tilde{X}_{\infty,\tr'}.
\end{equation}
Let us denote by $\varphi_{\tr'}$ the mapping from $\tilde{X}_{\infty,\tr}$ to $\tilde{X}_{\infty,\tr'}$ defined by $\varphi_{\tr'}(\tx)=\tx'$ for all $\tx\in\tilde{X}_{\infty,\tr}$. It follows from~\eqref{e1.1.6} that, after corresponding metric identifications, the mapping $\varphi_{\tr'}\colon \sstable{X}{r} \to \tilde{X}_{\infty, \tr'}$ passes to an isometric embedding $em'\colon \pretan{X}{r} \rightarrow \pretan{X}{r'}$ such that the
diagram
\begin{equation}\label{e1.1.8}
\begin{CD}
\sstable{X}{r} @>\varphi_{\tr'}>> \tilde{X}_{\infty, \tr'}\\
@V{\pi}VV @VV{\pi'}V \\
\pretan{X}{r} @>em'>> \pretan{X}{r'}
\end{CD}
\end{equation}
is commutative. Here, for all $\tx\in \sstable{X}{r}$ and $\tilde t\in\tilde{X}_{\infty, \tr'}$,
\begin{equation}\label{e1.1.9}
\begin{aligned}
\pi(\tx) &:= \{\ty \in \tilde{X}_{\infty,\tr}\colon d_{\tr}(\tx, \ty)=0\},\\
\pi'(\tilde t) &:= \{\tz \in \tilde{X}_{\infty,\tr'} \colon d_{\tr'}(\tld{t}, \tz)=0\}.
\end{aligned}
\end{equation}

In what follows, for every scaling sequence \(\tr\), the set of all subsequences of \(\tr\) we will be denote by \(\tld{\mathbf{R}} = \tld{\mathbf{R}}(\tr)\).

\begin{definition}\label{d1.1.5}
Let $(X,d)$ be an unbounded metric space, $\tr$ be a scaling sequence, \(\sstable{X}{r}\) be a maximal self-stable subset of \(Seq(X, \tr)\) and let \(\pretan{X}{r}\) be the metric identification of \(\sstable{X}{r}\). The space $\pretan{X}{r}$ is \emph{asymptotically tangent} to \((X, d)\) with respect to \(\tr\) if, for every \(\tr' \in \tldbf{R}\), there is \(\sstable{X}{r'}\) satisfying \eqref{e1.1.7} such that, for the metric identification \(\pretan{X}{r'}\) of \(\sstable{X}{r'}\), the isometric embedding $em'\colon \pretan{X}{r} \rightarrow \pretan{X}{r'}$ is an isometry.
\end{definition}

\begin{remark}\label{r2.8}
A set of reformulations of Definition~\ref{d1.1.5} will be given in the next section of the paper (see Proposition~\ref{p1.3.1}). Now we only note that \(em' \colon \pretan{X}{r} \to \pretan{X}{r'}\) is an isometry if and only if it is a surjection.
\end{remark}

\section{Elementary properties}
\label{sec4}

Let \((X, d)\) be an unbounded metric space. It is clear that every bounded \(\seq{x_n}{n} \subset X\) belongs to $Seq(X, \tr)$ for every scaling sequence \(\tr\). Let us consider now the set \(\tilde{X}_{\infty}\) of all sequences $\seq{x_n}{n}\subset X$ satisfying the limit relation $\lim_{n\to\infty} d(x_n, p) = \infty$, where \(p\) is a given point of \(X\). 

\begin{proposition}\label{p1.2.1}
Let $(X, d)$ be an unbounded metric space. Then the following statements hold.
\begin{enumerate}
\item\label{p1.2.1s1} The intersection $\tld{X}_{\infty} \cap Seq(X, \tr)$ is nonempty for every scaling sequence $\tr.$
\item\label{p1.2.1s2} For every $\tx \in\tilde{X}_{\infty},$ there exists a scaling sequence $\tr$ such that $\tx\in Seq(X, \tr)$ and $\Dist{x} = 1$.
\end{enumerate}
\end{proposition}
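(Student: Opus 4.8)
The plan is to treat the two statements separately, producing in each case the required object directly.

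I would dispose of statement (ii) first, since it only asks us to match a given $\tx$ with a suitable $\tr$. Given $\tx = \seq{x_n}{n} \in \tilde{X}_{\infty}$, so that $d(x_n, p) \to \infty$, I would simply set $r_n := \max\{d(x_n, p), 1\}$. This is a sequence of positive reals with $r_n \to \infty$, hence a scaling sequence; and since $d(x_n, p) > 1$ for all large $n$, we have $r_n = d(x_n, p)$ and therefore $\frac{d(x_n, p)}{r_n} = 1$ for all such $n$. Thus the limit in \eqref{e1.1.2} exists and equals $1$, which shows at once that $\tx \in Seq(X, \tr)$ and that $\Dist{x} = 1$.

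For statement (i) I fix a scaling sequence $\tr = \seq{r_n}{n}$ and must produce a single sequence lying in both $\tilde{X}_{\infty}$ and $Seq(X, \tr)$. First I would record that unboundedness of $(X, d)$ forces $\sup_{x \in X} d(x, p) = \infty$: if $d(u, v)$ is large, then by the triangle inequality at least one of $d(u, p)$, $d(v, p)$ is large. Hence there is $\seq{z_k}{k} \subset X$ with $s_k := d(z_k, p) \to \infty$, and after passing to a subsequence I may assume $\seq{s_k}{k}$ is strictly increasing. The idea is then to assemble $\tx$ from the points $z_k$ so that its distances to $p$ still diverge but grow slowly relative to $r_n$, forcing the ratio to converge to the convenient value $0$. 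Concretely, for each $n$ large enough that $\sqrt{r_n} \geq s_1$ I would set $x_n := z_{m(n)}$, where $m(n) := \max\{k \colon s_k \leq \sqrt{r_n}\}$; this index is well defined because $s_k \to \infty$ makes the defining set finite and (for such $n$) nonempty. Since $\sqrt{r_n} \to \infty$, one checks that $m(n) \to \infty$, whence $d(x_n, p) = s_{m(n)} \to \infty$ and $\tx \in \tilde{X}_{\infty}$. At the same time $0 \leq \frac{d(x_n, p)}{r_n} = \frac{s_{m(n)}}{r_n} \leq \frac{\sqrt{r_n}}{r_n} = \frac{1}{\sqrt{r_n}} \to 0$, so by squeezing the limit in \eqref{e1.1.2} exists, placing $\tx \in Seq(X, \tr)$ and giving $\tilde{X}_{\infty} \cap Seq(X, \tr) \neq \varnothing$.

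The only genuine obstacle is the one handled by the auxiliary scale $\sqrt{r_n}$. Because the distance set $\{d(x, p) \colon x \in X\}$ may be sparse, I cannot prescribe the values $d(x_n, p)$ outright and must instead select from the available points $z_k$ so as to reconcile the two competing demands: that $d(x_n, p) \to \infty$ while $\frac{d(x_n, p)}{r_n}$ stays controlled. Any rate tending to infinity strictly more slowly than $r_n$ would serve in place of $\sqrt{r_n}$; once the selection is set up, the remaining verifications are routine.
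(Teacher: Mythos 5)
Your proposal is correct and follows essentially the same route as the paper: for (ii) you rescale by the distances \(d(x_n,p)\) themselves (adjusted to stay positive), and for (i) you exploit the auxiliary scale \(\sqrt{r_n}\) to select points whose distances to \(p\) diverge yet are \(o(r_n)\), which is exactly the paper's closed-ball construction with \(k_n = \sup\{d(x,p)\colon d(x,p)\le r_n^{1/2}\}\). The only cosmetic remark is that your \(x_n\) is defined only for \(n\) with \(\sqrt{r_n}\ge s_1\); setting \(x_n := z_1\) for the finitely many remaining indices completes the sequence without affecting any limit.
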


\begin{proof}
\ref{p1.2.1s1} Let $\tr=(r_n)_{n\in\NN}$ be a scaling sequence and let $p\in X$. Let us denote by $\ol{B}(p, r_{n}^{1/2})$ the closed ball
\begin{equation}\label{e1.2.1}
\{x\in X: d(x, p)\le r_{n}^{1/2}\}.
\end{equation}
Write
\begin{equation}\label{e1.2.2}
k_{n} := \sup\{d(x, p): x\in \ol{B}(p, r_n^{1/2})\},
\end{equation}
$n= 1,2, \ldots$. We can find $\tx=\seq{x_n}{n}\subset X$ such that
\begin{equation}\label{e1.2.3}
\lim_{n\to\infty}\frac{k_n}{d(x_n, p)}=1.
\end{equation}
Since $X$ is unbounded and \(\lim_{n\to\infty} r_{n} = \infty\), the equality $\lim_{n\to\infty} k_{n} = \infty$ holds. Consequently, we have \(\lim_{n\to\infty} d(x_n, p)=\infty\), i.e., $\tx\in\tilde{X}_{\infty}$. It follows from \eqref{e1.2.1} and \eqref{e1.2.2} that $k_n\le r_{n}^{1/2}$ holds for every $n\in\NN.$ The last inequality and \eqref{e1.2.3} imply
\[
\lim_{n \to \infty} \frac{d(x_n, p)}{r_n}=0.
\]
Thus, \(\tx\) belongs to \(\tld{X}_{\infty} \cap Seq(X, \tr)\).

\ref{p1.2.1s2} Let $\tx = \seq{x_n}{n}\in\tilde{X}_{\infty}$ and let $p\in X$. Then we have $\lim_{n\to\infty} d(x_n, p) = \infty$. Define a sequence $\tr=(r_n)_{n\in\NN}$ as:
\begin{equation*}\label{Func}
r_{n}:=\begin{cases}
d(x_n, p) & \mbox{if } x_n \ne p\\
1 & \mbox{if } x_n = p.
\end{cases}
\end{equation*}
From $\tx \in \tilde{X}_{\infty}$ it follows that $\lim_{n\to\infty} r_{n} = \infty$. Hence, $\tr$ is a scaling sequence. It is clear that $\Dist{x}=1$. Thus, $Seq(X, \tr)$ contains $\tx$.
\end{proof}

For every unbounded metric space $(X, d)$ and every scaling sequence $\tr$, we define the set $\sstable{X}{r}^{0}$ of sequences \(\tz = \seq{z_n}{n} \subset X\) by the rule:

\begin{equation}\label{e1.2.4}
\left(\tz \in \sstable{X}{r}^{0}\right) \Leftrightarrow \left(\lim_{n\to\infty} \frac{d(z_n, p)}{r_n} = 0\right),
\end{equation}
where $p$ is a point of $X$.

\begin{remark}\label{r1.2.3}
The set $\sstable{X}{r}^{0}$ is invariant under replacing of $p\in X$ by arbitrary point of \(X\) (cf. Remark~\ref{r1.1.3}).
\end{remark}

In Proposition~\ref{p1.2.2} below, we collect together some basic properties of the set $\sstable{X}{r}^{0}$. Similar results were formulated without proofs at \cite{BD2018, BD2019}.

\begin{proposition}\label{p1.2.2}
Let $(X, d)$ be an unbounded metric space and let $\tr$ be a scaling sequence. Then the following statements hold.
\begin{enumerate}
\item\label{p1.2.2s0} We have \(\sstable{X}{r}^{0} \subseteq Seq(X, \tr)\) and \(\tx \in \sstable{X}{r}^{0}\) for every bounded \(\tx = \seq{x_n}{n} \subset X\).
\item\label{p1.2.2s1} The set $\sstable{X}{r}^{0} \cap \tilde{X}_{\infty}$ is nonempty.
\item\label{p1.2.2s2} If we have $\tz\in \sstable{X}{r}^{0}$, $\ty = \seq{y_n}{n} \subset X$ and $d_{\tr}(\tz, \ty)=0$, then $\ty\in\sstable{X}{r}^{0}$.
\item\label{p1.2.2s3} If $F$ is a self-stable subset of \(Seq(X, \tr)\), then $\sstable{X}{r}^{0}\cup F$ is also a self-stable subset of $Seq(X, \tr)$.
\item\label{p1.2.2s4} The set $\sstable{X}{r}^{0}$ is self-stable.
\item\label{p1.2.2s5} The inclusion $\sstable{X}{r}^{0}\subseteq\sstable{X}{r}$ holds for every maximal self-stable subset $\sstable{X}{r}$ of $Seq(X, \tr).$
\item\label{p1.2.2s6} Let $\tz \in \tilde{X}_{\infty, \tr}^{0}$ and $\tx = \seq{x_n}{n} \subset X$. Then $\tx\in Seq (X, \tr)$ if and only if $\tx$ and $\tz$ are mutually stable. 

\item\label{p1.2.2s7} For every $\tx\in Seq (X, \tr)$ and every \(\tz \in \tilde{X}_{\infty, \tr}^{0}\) we have 
$$
\Dist[r]{x}=d_{\tr}(\tx, \tz).
$$
\item\label{p1.2.2s8} The set \(\sstable{X}{r}^{0}\) is a point of every asymptotically pretangent space \(\pretan{X}{r}\).
\item\label{p1.2.2s9} The equality $\varphi_{\tr'}(\sstable{X^0}{r}) = \sstable{X^0}{r'}$ holds for every $\tr'$, where $\varphi_{\tr'}$ is defined as in~\eqref{e1.1.8}.
\end{enumerate}
\end{proposition}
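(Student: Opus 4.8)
The plan is to extract one triangle-inequality estimate and then read off almost all ten assertions from it, treating only the metric-identification claim~\ref{p1.2.2s8} and the subsequence claim~\ref{p1.2.2s9} separately. The estimate is this: if $\tz = \seq{z_n}{n} \in \sstable{X}{r}^{0}$ and $\tx = \seq{x_n}{n} \subset X$ is arbitrary, then $|d(x_n, z_n) - d(x_n, p)| \le d(z_n, p)$ by the triangle inequality, so
\[
\left| \frac{d(x_n, z_n)}{r_n} - \frac{d(x_n, p)}{r_n} \right| \le \frac{d(z_n, p)}{r_n} \xrightarrow[n \to \infty]{} 0
\]
by the defining relation~\eqref{e1.2.4}. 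Thus $\frac{d(x_n, z_n)}{r_n}$ and $\frac{d(x_n, p)}{r_n}$ share the same asymptotics, and this drives~\ref{p1.2.2s2}--\ref{p1.2.2s7}. Before using it, I dispose of the two purely definitional items. For~\ref{p1.2.2s0}, the limit in~\eqref{e1.2.4} is finite (it equals $0$), so $\sstable{X}{r}^{0} \subseteq Seq(X, \tr)$; and a bounded $\tx$ with $\sup_n d(x_n, p) = M$ satisfies $0 \le \frac{d(x_n, p)}{r_n} \le \frac{M}{r_n} \to 0$, hence $\tx \in \sstable{X}{r}^{0}$. For~\ref{p1.2.2s1} no new work is needed: the sequence built in the proof of Proposition~\ref{p1.2.1}\ref{p1.2.1s1} lies in $\tilde{X}_{\infty}$ and has $\frac{d(x_n, p)}{r_n} \to 0$, so it belongs to $\sstable{X}{r}^{0} \cap \tilde{X}_{\infty}$.

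Next I would clear the short statements. For~\ref{p1.2.2s2}, $\frac{d(y_n, p)}{r_n} \le \frac{d(y_n, z_n)}{r_n} + \frac{d(z_n, p)}{r_n} \to 0$ gives $\ty \in \sstable{X}{r}^{0}$. For~\ref{p1.2.2s4}, the estimate (or directly $\frac{d(z_n, z_n')}{r_n} \le \frac{d(z_n, p)}{r_n} + \frac{d(p, z_n')}{r_n} \to 0$) shows any two elements of $\sstable{X}{r}^{0}$ are mutually stable with $d_{\tr}$-distance $0$, so $\sstable{X}{r}^{0}$ is self-stable. Statement~\ref{p1.2.2s3} then splits into cases: two points of $F$ are mutually stable because $F$ is self-stable, two points of $\sstable{X}{r}^{0}$ by~\ref{p1.2.2s4}, and a mixed pair $\tx \in F \subseteq Seq(X, \tr)$, $\tz \in \sstable{X}{r}^{0}$ by the estimate, which yields $\frac{d(x_n, z_n)}{r_n} \to \Dist{x}$, a finite limit. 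The same estimate shows $\frac{d(x_n, z_n)}{r_n}$ converges exactly when $\frac{d(x_n, p)}{r_n}$ does and to the same value, which is precisely the equivalence of~\ref{p1.2.2s6} together with the equality $\Dist{x} = d_{\tr}(\tx, \tz)$ of~\ref{p1.2.2s7}. For~\ref{p1.2.2s5} I would apply~\ref{p1.2.2s3} with $F = \sstable{X}{r}$: the self-stable set $\sstable{X}{r}^{0} \cup \sstable{X}{r}$ contains the maximal self-stable set $\sstable{X}{r}$, and a self-stable superset of a maximal self-stable set must coincide with it (by Definition~\ref{d1.1.2}), so $\sstable{X}{r}^{0} \subseteq \sstable{X}{r}$.

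It remains to handle the last two items. For~\ref{p1.2.2s8}, fix $\tz_0 \in \sstable{X}{r}^{0}$ (possible by~\ref{p1.2.2s1}) and work inside a maximal self-stable $\sstable{X}{r}$, which contains $\sstable{X}{r}^{0}$ by~\ref{p1.2.2s5}. The point of $\pretan{X}{r}$ represented by $\tz_0$ is the class $[\tz_0]_0 = \{\ty \in \sstable{X}{r} \colon d_{\tr}(\tz_0, \ty) = 0\}$. By~\ref{p1.2.2s4} every $\tz \in \sstable{X}{r}^{0}$ has $d_{\tr}(\tz_0, \tz) = 0$, giving $\sstable{X}{r}^{0} \subseteq [\tz_0]_0$, while any $\ty \in [\tz_0]_0$ lies in $\sstable{X}{r}^{0}$ by~\ref{p1.2.2s2}; hence $[\tz_0]_0 = \sstable{X}{r}^{0}$ is a point of $\pretan{X}{r}$. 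For~\ref{p1.2.2s9} the inclusion $\varphi_{\tr'}(\sstable{X}{r}^{0}) \subseteq \sstable{X}{r'}^{0}$ is clear, since a subsequence of the null sequence $\frac{d(z_n, p)}{r_n}$ is again null. For the reverse inclusion, given $\tw = \seq{w_k}{k} \in \sstable{X}{r'}^{0}$ with $\tr' = \seq{r_{n_k}}{k}$, I would build a preimage by setting $z_{n_k} := w_k$ and $z_n := p$ for $n \notin \{n_k \colon k \in \NN\}$; one verifies that $\frac{d(z_n, p)}{r_n} \to 0$ over the full index set, so $\tz \in \sstable{X}{r}^{0}$ and $\varphi_{\tr'}(\tz) = \tz' = \tw$.

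I expect~\ref{p1.2.2s9} to be the main obstacle. Every other item reduces to the single estimate or to a short case analysis, whereas~\ref{p1.2.2s9} requires an explicit back-extension of $\tw$ to a full sequence and, crucially, a check that $\frac{d(z_n, p)}{r_n} \to 0$ along \emph{all} indices rather than merely along the chosen subsequence: one must confirm that the values inserted at the omitted indices (here $d(p, p)/r_n = 0$) do not obstruct convergence, which is the one place where a bare triangle-inequality bound does not suffice.
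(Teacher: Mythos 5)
Your proposal is correct and takes essentially the same approach as the paper: your single ``master estimate'' is exactly the double triangle-inequality bound the paper uses repeatedly, the case analyses for (iii)--(v) and the deduction of (vi)--(viii) coincide in substance, and your interleaving construction for (ix) matches the paper's, except that you fill the omitted indices with the constant point $p$ while the paper fills them with the entries of an arbitrary element of $\sstable{X}{r}^{0}$ --- both fills are valid since the constant sequence is bounded and hence lies in $\sstable{X}{r}^{0}$ by (i). Nothing is missing.
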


\begin{proof}
In what follows, \(p\) will be denote a given point of \(X\).

\ref{p1.2.2s0} It follows directly from the definitions of \(Seq(X, \tr)\) and \(\sstable{X}{r}^{0}\).

\ref{p1.2.2s1} This statement follows from the proof of statement~\ref{p1.2.1s1} in Proposition~\ref{p1.2.1}.

\ref{p1.2.2s2} Let \(\tz \in \sstable{X}{r}^{0}\), \(\ty = \seq{y_n}{n} \subset X\) and \(d_{\tr}(\tz, \ty)=0\) hold. To prove $\ty \in \sstable{X}{r}^{0}$, we note that
\begin{equation*}
0 \le \limsup_{n\to\infty} \frac{d(y_n, p)}{r_n} \le d_{\tr}(\tz, \ty) + \Dist{z}=0.
\end{equation*}
It implies the existence of the finite limit 
\[
\Dist[r]{y} = \lim_{n\to\infty} \frac{d(y_n, p)}{r_n}
\]
and the equality \(\Dist{y} = 0\). Thus, \(\ty \in \sstable{X}{r}^{0}\) holds.

\ref{p1.2.2s3} Let $F\subseteq Seq(X, \tr)$ be self-stable. It is clear that $\Dist{y}$ exists for every $\ty \in F \cup \sstable{X}{r}^{0}$. Hence $F \cup \sstable{X}{r}^{0}$ is self-stable if and only if $\tz$ and $\tx$ are mutually stable for all $\tx, \tz\in F\cup\sstable{X}{r}^{0}$. If \(\tz\), \(\tx \in F\), then $\tz$ and $\tx$ are mutually stable because \(F\) is self-stable. 

Suppose $\tx\in F$ and $\tz\in\sstable{X}{r}^{0}$. The double inequality
\begin{equation*}
d(x_n, p) - d(z_n, p) \leq d(x_n, z_n) \leq d(x_n, p) + d(z_n, p),
\end{equation*}
the membership \(\tx \in Seq(X, \tr)\), and the equality
$$
\lim_{n\to\infty} \frac{d(z_n, p)}{r_n} = 0
$$
imply the existence of $d_{\tr}(\tx, \tz)$. 

The case $\tx, \tz\in\sstable{X}{r}^{0}$ is similar. Thus the set $F\cup\sstable{X}{r}^{0}$ is self-stable.

\ref{p1.2.2s4} This follows from~\ref{p1.2.2s3} with $F=\varnothing$.

\ref{p1.2.2s5} Using statement~\ref{p1.2.2s3} with $F=\sstable{X}{r}$ we see that $\sstable{X}{r}^{0} \cup \sstable{X}{r}$ is self-stable. Since $\sstable{X}{r}$ is maximal self-stable, the equality $\sstable{X}{r}^{0} \cup \sstable{X}{r} = \sstable{X}{r}$ holds. Thus, $\sstable{X}{r}^{0}\subseteq\sstable{X}{r}$.

\ref{p1.2.2s6} Let \(\tz \in\tilde{X}_{\infty, \tr}^{0}\) and \(\tx = \seq{x_n}{n} \subset X\). Suppose $\tx$ and $\tz$ are mutually stable. Then, using the triangle inequality and \eqref{e1.2.4}, we obtain
\begin{equation}\label{p4.2:e1}
\limsup_{n\to\infty}\frac{d(x_n, p)}{r_n} \le d_{\tr}(\tx, \tz) \le \liminf_{n\to\infty} \frac{d(x_n, p)}{r_n}.
\end{equation}
Hence, $\tx\in Seq(X, \tr)$ holds. 

If $\tx\in Seq(X, \tr)$, then, similarly to~\eqref{p4.2:e1}, we obtain
\begin{equation}\label{p1.2.2e2}
\limsup_{n\to\infty} \frac{d(x_n, z_n)}{r_n} \le \Dist{x} \le \liminf_{n\to\infty} \frac{d(x_n, z_n)}{r_n}.
\end{equation}
Consequently, there is a finite limit
\[
\lim_{n \to \infty} \frac{d(x_n, z_n)}{r_n},
\]
i.e., $\tx$ and $\tz$ are mutually stable.

\ref{p1.2.2s7} Let \(\tx \in Seq (X, \tr)\) and \(\tz \in \tilde{X}_{\infty, \tr}^{0}\). Then the equality \(\Dist{x} = d_{\tr} (\tx, \tz)\) follows from \eqref{p1.2.2e2}.

\ref{p1.2.2s8} It follows from~\ref{p1.2.2s2}, \ref{p1.2.2s5} and the definition of pretangent spaces.

\ref{p1.2.2s9} Let \((n_k)_{k \in \NN}\) be a strictly increasing sequence of natural numbers, $\tr' := \seq{r_{n_k}}{k}$, $\ty = \seq{y_k}{k} \in \sstable{X^0}{r'}$ and let $\tx = \seq{x_n}{n} \in \sstable{X^0}{r}$. Write
$$
z_n := \begin{cases}
y_k & \text{if $n = n_k$ for some \(k \in \NN\)}\\
x_n & \text{otherwise}.
\end{cases}
$$
Then, for $\tz = \seq{z_n}{n}$, we have $\tz' = \ty$ and $\tz \in \sstable{X^0}{r}$.
\end{proof}

The following lemma is modification of Lemma~2.1 from \cite{BD2018}.

\begin{lemma}\label{l1.2.4}
Let $(X, d)$ be an unbounded metric space, $\ty$ be a sequence of points of \(X\), $\tr$ be a scaling sequence and let $\sstable{X}{r}$ be a maximal self-stable subset of \(Seq(X, \tr)\). If $\ty$ and $\tx$ are mutually stable for every $\tx\in\sstable{X}{r}$, then $\ty\in\sstable{X}{r}$.
\end{lemma}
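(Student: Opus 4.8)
The plan is to reduce the statement to the maximality property of $\sstable{X}{r}$, but the hypothesis cannot be fed into that property directly. Indeed, Definition~\ref{d1.1.2} permits us to conclude $\ty \in \sstable{X}{r}$ only for sequences $\ty$ that are already known to belong to $Seq(X, \tr)$, whereas our hypothesis says nothing a priori about the existence of the limit $\Dist{y}$. So the crux of the argument is to upgrade ``mutual stability of $\ty$ with every element of $\sstable{X}{r}$'' into the membership $\ty \in Seq(X, \tr)$; after that, maximality finishes the job almost mechanically.

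First I would produce a convenient witness inside $\sstable{X}{r}$. By statement~\ref{p1.2.2s1} of Proposition~\ref{p1.2.2} the set $\sstable{X}{r}^{0}$ is nonempty (it even meets $\tilde{X}_{\infty}$), and by statement~\ref{p1.2.2s5} we have $\sstable{X}{r}^{0} \subseteq \sstable{X}{r}$. Fix any $\tz \in \sstable{X}{r}^{0}$; then $\tz \in \sstable{X}{r}$, and the hypothesis of the lemma guarantees that $\ty$ and $\tz$ are mutually stable. Now I would apply statement~\ref{p1.2.2s6} of Proposition~\ref{p1.2.2} with this particular $\tz \in \sstable{X}{r}^{0}$: that statement asserts that an arbitrary sequence $\ty \subset X$ lies in $Seq(X, \tr)$ if and only if it is mutually stable with $\tz$. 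Since mutual stability of $\ty$ and $\tz$ has just been verified, we obtain $\ty \in Seq(X, \tr)$.

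With $\ty \in Seq(X, \tr)$ in hand, I would invoke the maximality of $\sstable{X}{r}$. By hypothesis $\ty$ is mutually stable with every $\tx \in \sstable{X}{r}$, so no element of $\sstable{X}{r}$ fails to be mutually stable with $\ty$; hence the second alternative in the definition of a maximal self-stable set is excluded, and the only remaining possibility is $\ty \in \sstable{X}{r}$, as required. The sole genuine obstacle here is the conceptual first step, namely recognizing that maximality is inapplicable until $\ty$ is certified to belong to $Seq(X, \tr)$, and that this certification is obtained precisely by testing $\ty$ against a member of the subspace $\sstable{X}{r}^{0}$ via statement~\ref{p1.2.2s6}; everything else is a direct unwinding of the definitions.
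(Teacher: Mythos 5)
Your proof is correct and follows essentially the same route as the paper's own argument: both pick a witness \(\tz \in \sstable{X}{r}^{0}\), use statement \ref{p1.2.2s5} of Proposition~\ref{p1.2.2} to place it in \(\sstable{X}{r}\), apply statement \ref{p1.2.2s6} to upgrade mutual stability with \(\tz\) into the membership \(\ty \in Seq(X, \tr)\), and then conclude by maximality. Your explicit appeal to statement \ref{p1.2.2s1} for the nonemptiness of \(\sstable{X}{r}^{0}\) is a minor point the paper leaves implicit, but otherwise the two proofs coincide.
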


\begin{proof}
Suppose that \(\ty\) and \(\tx\) are mutually stable for every \(\tx \in \sstable{X}{r}\). Let us consider an arbitrary \(\tz \in \sstable{X}{r}^{0}\). By statement~\ref{p1.2.2s5} of Proposition~\ref{p1.2.2}, we obtain
\begin{equation}\label{l1.2.4:e1}
\tz \in \sstable{X}{r}.
\end{equation}
Since \(\ty\) and \(\tz\) are mutually stable by supposition, \(\ty \in Seq(X, \tr)\) holds by statement \ref{p1.2.2s6} of that proposition. Using our supposition again, we obtain \(\ty \in \sstable{X}{r}\), because \(\sstable{X}{r}\) is a maximal self-stable subset of \(Seq(X, \tr)\).
\end{proof}

\begin{lemma}\label{l1.2.5}
Let $(X, d)$ be an unbounded metric space, let $\tr$ be a scaling sequence and let \(\tx = \seq{x_n}{n}\), \(\ty = \seq{y_n}{n}\) and \(\tld{t} = \seq{t_n}{n}\) be sequences of points of \(X\). If $\tx$ and $\ty$ are mutually stable with respect to $\tr$, and $d_{\tr}(\tx, \tilde t)=0$, then $\ty$ and $\tilde t$ are mutually stable with respect to $\tr$.
\end{lemma}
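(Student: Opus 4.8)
The plan is to verify directly the defining condition of mutual stability from Definition~\ref{d1.1.1}: I must produce the finite limit $\lim_{n\to\infty} \frac{d(y_n, t_n)}{r_n}$ for the pair $\ty$, $\tld{t}$. The natural candidate for its value is $d_{\tr}(\tx, \ty)$, which exists and is finite precisely because $\tx$ and $\ty$ are mutually stable by hypothesis. So the strategy is to show that $\frac{d(y_n, t_n)}{r_n}$ and $\frac{d(x_n, y_n)}{r_n}$ have the same limiting behavior, the discrepancy between them being controlled by $\frac{d(x_n, t_n)}{r_n}$, which vanishes by the assumption $d_{\tr}(\tx, \tld{t}) = 0$.

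The key step is a two-sided comparison of $d(y_n, t_n)$ with $d(x_n, y_n)$ obtained by applying the triangle inequality twice. From $d(y_n, t_n) \le d(y_n, x_n) + d(x_n, t_n)$ and $d(x_n, y_n) \le d(x_n, t_n) + d(t_n, y_n)$ I get the reverse-triangle bound
\[
|d(y_n, t_n) - d(x_n, y_n)| \le d(x_n, t_n),
\]
valid for every $n \in \NN$. Dividing by $r_n > 0$ yields
\[
\left| \frac{d(y_n, t_n)}{r_n} - \frac{d(x_n, y_n)}{r_n} \right| \le \frac{d(x_n, t_n)}{r_n}.
\]

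It then remains to pass to the limit. The hypothesis $d_{\tr}(\tx, \tld{t}) = 0$ says the right-hand side above tends to $0$, while $\frac{d(x_n, y_n)}{r_n} \to d_{\tr}(\tx, \ty)$ by the mutual stability of $\tx$ and $\ty$. A routine squeeze then forces $\frac{d(y_n, t_n)}{r_n}$ to converge to the finite number $d_{\tr}(\tx, \ty)$, which is exactly the statement that $\ty$ and $\tld{t}$ are mutually stable with respect to $\tr$ (and as a byproduct one obtains $d_{\tr}(\ty, \tld{t}) = d_{\tr}(\tx, \ty)$). I do not expect any genuine obstacle here; the only point requiring a little care is to invoke \emph{both} triangle inequalities so as to secure the two-sided estimate, rather than a one-sided bound that would only control one of $\limsup$ or $\liminf$.
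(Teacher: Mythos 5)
Your proof is correct and is essentially the paper's own argument: the paper likewise applies the triangle inequality on both sides to sandwich $\liminf_{n\to\infty} d(y_n,t_n)/r_n$ and $\limsup_{n\to\infty} d(y_n,t_n)/r_n$ between $d_{\tr}(\tx,\ty) - d_{\tr}(\tx,\tld{t})$ and $d_{\tr}(\tx,\ty) + d_{\tr}(\tx,\tld{t})$, then uses $d_{\tr}(\tx,\tld{t}) = 0$ to conclude the limit exists and equals $d_{\tr}(\tx,\ty)$. Your packaging of the two estimates into the single reverse-triangle bound $|d(y_n,t_n) - d(x_n,y_n)| \leq d(x_n,t_n)$ before passing to the limit is only a cosmetic difference.
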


\begin{proof}
It follows from \eqref{e1.1.1}, the equality $d_{\tr}(\tx, \tilde t)=0$ and the inequalities
\begin{align*}
d_{\tr}(\tx, \ty) - d_{\tr}(\tx, \tilde t) & \leq \liminf_{n\to\infty} \frac{d(y_n,t_n)}{r_n} \\
&\leq \limsup_{n\to\infty} \frac{d(y_n,t_n)}{r_n} \leq d_{\tr}(\tx, \ty) + d_{\tr}(\tx, \tilde t). \qedhere
\end{align*}
\end{proof}

\begin{lemma}\label{l4.6}
Let \((X, d)\) be an unbounded metric space, \(\tr\) be a scaling sequence, let \(\tx = \seq{x_n}{n} \in Seq(X, \tr)\) and let \(\tld{t} = \seq{t_n}{n} \subset X\). If \(\tx\) and \(\tld{t}\) are mutually stable with respect to \(\tr\) and the equality
\begin{equation}\label{l4.6:e1}
d_{\tr}(\tld{x}, \tld{t}) = 0
\end{equation}
holds, then we have
\begin{equation}\label{l4.6:e2}
\tld{t} \in Seq(X, \tr).
\end{equation}
\end{lemma}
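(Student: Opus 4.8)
The plan is to verify directly that $\tld{t}$ satisfies the defining condition \eqref{e1.1.2} for membership in $Seq(X, \tr)$, namely that the finite limit $\lim_{n\to\infty} d(t_n, p)/r_n$ exists for a fixed $p \in X$. The engine of the argument is the reverse triangle inequality combined with the squeeze theorem, so this is a short computation rather than a deep result; the technique mirrors the double estimate \eqref{p4.2:e1} appearing in the proof of statement~\ref{p1.2.2s6} of Proposition~\ref{p1.2.2}.

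First I would fix $p \in X$ and translate the hypotheses into limit relations. Since $\tx \in Seq(X, \tr)$, the finite limit $\Dist{x} = \lim_{n\to\infty} d(x_n, p)/r_n$ exists by \eqref{e1.1.2}. Since $\tx$ and $\tld{t}$ are mutually stable and \eqref{l4.6:e1} holds, Definition~\ref{d1.1.1} gives
\[
\lim_{n\to\infty} \frac{d(x_n, t_n)}{r_n} = d_{\tr}(\tx, \tld{t}) = 0.
\]

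Next I would apply the reverse triangle inequality $|d(t_n, p) - d(x_n, p)| \leq d(x_n, t_n)$, which holds in any metric space, and divide by $r_n > 0$ to obtain
\[
\frac{d(x_n, p)}{r_n} - \frac{d(x_n, t_n)}{r_n} \leq \frac{d(t_n, p)}{r_n} \leq \frac{d(x_n, p)}{r_n} + \frac{d(x_n, t_n)}{r_n}
\]
for every $n \in \NN$. Letting $n \to \infty$, the outer terms both converge to $\Dist{x} + 0 = \Dist{x}$, so by the squeeze theorem the finite limit $\lim_{n\to\infty} d(t_n, p)/r_n = \Dist{x}$ exists. This is exactly \eqref{e1.1.2} for $\tld{t}$, which yields \eqref{l4.6:e2}. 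I do not expect a genuine obstacle here; the only thing worth checking carefully is that the reverse triangle inequality is written with the correct pair of points, so that the common term $d(x_n, t_n)/r_n$ is the one tending to zero.
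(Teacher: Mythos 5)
Your proof is correct, but it takes a genuinely different route from the paper's. The paper proves the lemma structurally: it embeds \(\tx\) into a maximal self-stable set \(\sstable{X}{r}\) (whose existence rests on Zorn's lemma), uses Lemma~\ref{l1.2.5} to transfer mutual stability from \(\tx\) to \(\tld{t}\) against every member of \(\sstable{X}{r}\), restricts to the null sequences via the inclusion \(\sstable{X}{r}^{0} \subseteq \sstable{X}{r}\) (statement~\ref{p1.2.2s5} of Proposition~\ref{p1.2.2}), and finally invokes statement~\ref{p1.2.2s6} of that proposition to conclude \(\tld{t} \in Seq(X, \tr)\). You instead verify the definition \eqref{e1.1.2} directly: the reverse triangle inequality gives
\[
\frac{d(x_n, p)}{r_n} - \frac{d(x_n, t_n)}{r_n} \leqslant \frac{d(t_n, p)}{r_n} \leqslant \frac{d(x_n, p)}{r_n} + \frac{d(x_n, t_n)}{r_n},
\]
and since the outer sequences both tend to \(\Dist{x}\), the squeeze theorem yields the finite limit \(\Dist{t} = \Dist{x}\). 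Your argument is shorter, entirely self-contained, avoids maximal self-stable sets (and hence any appeal to Zorn's lemma), and gives the additional quantitative conclusion \(\Dist{t} = \Dist{x}\), which the paper's proof does not record. What the paper's route buys is reuse of already established machinery and a demonstration of how the distinguished set \(\sstable{X}{r}^{0}\) functions as a reference point; but note that statement~\ref{p1.2.2s6} of Proposition~\ref{p1.2.2} is itself proved by the same squeeze estimate (see \eqref{p1.2.2e2}), so in effect you have inlined that computation with \(\tx\) playing the role that a null sequence \(\tz \in \sstable{X}{r}^{0}\) plays in the paper. Both proofs ultimately rest on the same elementary estimate; yours simply reaches it without the detour.
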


\begin{proof}
Suppose that \(\tx\) and \(\tld{t}\) are mutually stable with respect to \(\tr\) and \eqref{l4.6:e1} holds. Let \(\sstable{X}{r}\) be a maximal self-stable subset of \(Seq(X, \tr)\) such that \(\tx \in \sstable{X}{r}\). Then, by Lemma~\ref{l1.2.5}, \(\tld{t}\) and \(\tld{y}\) are mutually stable for every \(\tld{y} \in \sstable{X}{r}\). By statement~\ref{p1.2.2s5} of Proposition~\ref{p1.2.2}, we have the inclusion \(\sstable{X}{r}^{0} \subseteq \sstable{X}{r}\). Hence, \(\tld{t}\) and \(\tld{z}\) are mutually stable with respect to \(\tr\) for every \(\tld{z} \in \sstable{X}{r}^{0}\). Now \eqref{l4.6:e2} follows from statement~\ref{p1.2.2s6} of Proposition~\ref{p1.2.2}.
\end{proof}

\begin{remark}\label{r3.6}
The set $\nu_{0} = \nu_{0, \tr} := \sstable{X}{r}^{0}$ is a common distinguished point of all pretangent spaces $\pretan{X}{r}$ (with given scaling sequence $\tr$). This point can be informally described as follows: ``The points of pretangent space $\pretan{X}{r}$ are infinitely removed from the points of initial space $X$, but $\pretan{X}{r}$ contains a unique point $\nu_{0, \tr}$ which is close to $X$ as much as possible.'' Example~2.2 from \cite{BD2018} shows that a pretangent space \(\pretan{X}{r}\) can be one-point, \(\pretan{X}{r} = \{\nu_0\}\), if \((X, d)\) is sparse at infinity.
\end{remark}

In the next proposition, we define the mappings \(\varphi_{\tr'} \colon \sstable{X}{r} \to \tilde{X}_{\infty, \tr'}\) and \(em' \colon \pretan{X}{r} \to \pretan{X}{r'}\) as in \eqref{e1.1.8}.

\begin{proposition}\label{p1.3.1}
Let $(X, d)$ be an unbounded metric space, $\tr$ be a scaling sequence, $\tilde{\mathbf{R}}$ be the set of all subsequences of $\tr$ and let \(\sstable{X}{r}\) be a maximal self-stable subset of \(Seq(X, \tr)\). Let us consider a pretangent space $\pretan{X}{r}$ corresponding to $\sstable{X}{r}$. Write
\[
\tilde{X}'_{\infty, \tr} = \{(x_{n_k})_{k\in \NN} \colon \seq{x_n}{n} \in \sstable{X}{r}\}
\]
for every \(\tr' = (r_{n_k})_{k\in \NN} \in \tilde{\mathbf{R}}\). Then the following statements are equivalent:
\begin{enumerate}
\item\label{p1.3.1s1} $\pretan{X}{r}$ is tangent;
\item\label{p1.3.1s2} \(\tilde{X}'_{\infty, \tr}\) is maximal self-stable subset of \(Seq(X, \tr')\) for every \(\tr' \in \tilde{\mathbf{R}}\);
\item\label{p1.3.1s3} For every $\tr' \in \tilde{\mathbf{R}}$ and every $\tilde{X}_{\infty, \tr'}$ satisfying \(\tilde{X}_{\infty, \tr'} \supseteq \tilde{X}'_{\infty, \tr}\), the mapping $\varphi_{\tr'} \colon \sstable{X}{r} \to \tilde{X}_{\infty, \tr'}$ is onto;
\item\label{p1.3.1s4} For every $\tr' \in \tilde{\mathbf{R}}$ there is $\tilde{X}_{\infty, \tr'}$ such that \(\tilde{X}_{\infty, \tr'} \supseteq \tilde{X}'_{\infty, \tr}\) and the mapping $\varphi_{\tr'} \colon \sstable{X}{r} \to \tilde{X}_{\infty, \tr'}$ is onto.
\end{enumerate}
\end{proposition}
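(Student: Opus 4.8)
The plan is to prove the chain (ii) $\Rightarrow$ (iii) $\Rightarrow$ (iv) $\Rightarrow$ (ii) by elementary manipulations with maximality, and then to establish (i) $\Leftrightarrow$ (iv), where the implication (i) $\Rightarrow$ (iv) carries the main difficulty. Throughout I would use two preliminary observations. First, by \eqref{e1.1.6} and \eqref{e3.3} the set $\tilde X'_{\infty,\tr}$ is always a self-stable subset of $Seq(X,\tr')$, and $\varphi_{\tr'}$ maps $\sstable{X}{r}$ exactly onto $\tilde X'_{\infty,\tr}$; hence $\varphi_{\tr'}\colon\sstable{X}{r}\to\sstable{X}{r'}$ is onto if and only if $\sstable{X}{r'}=\tilde X'_{\infty,\tr}$. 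Second, every $\sstable{X}{r'}$ is by definition a maximal self-stable set, so the equality $\sstable{X}{r'}=\tilde X'_{\infty,\tr}$ forces $\tilde X'_{\infty,\tr}$ to be maximal self-stable, and conversely if $\tilde X'_{\infty,\tr}$ is maximal self-stable then no self-stable superset can be strictly larger.

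These remarks give (ii) $\Leftrightarrow$ (iii) $\Leftrightarrow$ (iv) at once. For (ii) $\Rightarrow$ (iii): maximality of $\tilde X'_{\infty,\tr}$ collapses every admissible $\sstable{X}{r'}\supseteq\tilde X'_{\infty,\tr}$ onto it, so $\varphi_{\tr'}$ is onto. For (iii) $\Rightarrow$ (iv): since $\tilde X'_{\infty,\tr}$ is self-stable, Zorn's lemma guarantees at least one $\sstable{X}{r'}\supseteq\tilde X'_{\infty,\tr}$, and (iii) applies to it. For (iv) $\Rightarrow$ (ii): $\varphi_{\tr'}$ onto gives $\tilde X'_{\infty,\tr}=\sstable{X}{r'}$, a maximal self-stable set.

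The implication (iv) $\Rightarrow$ (i) is short: using the commutative diagram \eqref{e1.1.8} and the surjectivity of the natural projection $\pi'$, the surjectivity of $\varphi_{\tr'}$ makes $\pi'\circ\varphi_{\tr'}=em'\circ\pi$ surjective, whence $em'$ is surjective and therefore an isometry by Remark~\ref{r2.8}; performing this for every $\tr'$ is exactly tangency. The genuinely new step is (i) $\Rightarrow$ (iv). Here $em'$ surjective only guarantees, for each $\tilde t\in\sstable{X}{r'}$, some $\tx\in\sstable{X}{r}$ with $d_{\tr'}(\tx',\tilde t)=0$, which is weaker than $\varphi_{\tr'}$ being onto. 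The key idea is to upgrade this by a sequence-extension construction: given such $\tx$ and $\tilde t=(t_k)_{k\in\NN}$, I would define an interleaved sequence $\ty=(y_n)_{n\in\NN}$ by $y_{n_k}=t_k$ and $y_n=x_n$ for $n\notin\{n_k\}$, so that $\ty'=\tilde t$ by construction. The main work is to show $\ty\in\sstable{X}{r}$, which by Lemma~\ref{l1.2.4} reduces to checking that $\ty$ is mutually stable with every $\tw\in\sstable{X}{r}$ with respect to $\tr$. Along the indices $n_k$ the ratios $d(y_{n_k},w_{n_k})/r_{n_k}$ tend to $d_{\tr'}(\tilde t,\tw')$, which equals $d_{\tr'}(\tx',\tw')=d_{\tr}(\tx,\tw)$ because $d_{\tr'}(\tx',\tilde t)=0$ (Lemma~\ref{l1.2.5} together with \eqref{e1.1.6}); along the complementary indices the ratios $d(x_n,w_n)/r_n$ tend to the same value $d_{\tr}(\tx,\tw)$, being a sub-block of a convergent sequence. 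Hence the full limit $d_{\tr}(\ty,\tw)$ exists, so $\ty\in\sstable{X}{r}$ with $\ty'=\tilde t$, giving $\tilde t\in\tilde X'_{\infty,\tr}$. As $\tilde t$ was arbitrary, $\sstable{X}{r'}\subseteq\tilde X'_{\infty,\tr}$, whence equality and $\varphi_{\tr'}$ onto, i.e. (iv).

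The main obstacle is precisely this interleaving argument: one must control the behaviour of $\ty$ on the indices lying outside the subsequence $(n_k)$ and verify that both index-blocks contribute the identical limit $d_{\tr}(\tx,\tw)$ to $d(y_n,w_n)/r_n$. Once the two one-sided limits are identified in this way, the rest is routine bookkeeping with Lemmas~\ref{l1.2.4} and \ref{l1.2.5}, the identity \eqref{e1.1.6}, and Remark~\ref{r2.8}.
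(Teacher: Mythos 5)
Your proposal is correct and takes essentially the same approach as the paper's proof: the same maximality bookkeeping for the implications among \ref{p1.3.1s2}, \ref{p1.3.1s3} and \ref{p1.3.1s4}, the same commutative-diagram argument for \(\ref{p1.3.1s4} \Rightarrow \ref{p1.3.1s1}\) (you cite Remark~\ref{r2.8} where the paper invokes Corollary~\ref{c2.13}), and the identical interleaving construction controlled by Lemmas~\ref{l1.2.4} and~\ref{l1.2.5} for the hard direction. The only difference is organizational: the paper carries out the interleaving step as \(\ref{p1.3.1s1} \Rightarrow \ref{p1.3.1s2}\) (first establishing \(d_{\tr}(\tx, \tilde{t}) = 0\) for the spliced sequence and then applying Lemma~\ref{l1.2.5} at the level of \(\tr\)), while you phrase it as \(\ref{p1.3.1s1} \Rightarrow \ref{p1.3.1s4}\) with a two-block limit computation, which amounts to the same thing since \(\varphi_{\tr'}\) is onto \(\sstable{X}{r'}\) exactly when \(\sstable{X}{r'} = \tilde{X}'_{\infty, \tr}\).
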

\begin{proof}
$\ref{p1.3.1s1} \Rightarrow \ref{p1.3.1s2}$. Suppose $\pretan{X}{r}$ is tangent. Let $(n_k)_{k\in \NN} \subset \NN$ be strictly increasing and let $\tr' := (r_{n_k})_{k\in\NN}$. Write \(\sstable{X}{r'}\) for the maximal self-stable subset of \(Seq(X, \tr')\) satisfying \(\tilde{X}'_{\infty, \tr} \subseteq \tilde{X}_{\infty, \tr'}\). It suffices to show that
$$
\tilde{X}'_{\infty, \tr} \supseteq \tilde{X}_{\infty, \tr'}.
$$
This inclusion holds if and only if for every $\ty = \seq{y_k}{k} \in \tilde{X}_{\infty, \tr'}$ there exists $\tilde{t} = \seq{t_n}{n} \in \sstable{X}{r}$ such that
\begin{equation}\label{p1.3.1e1}
y_k = t_{n_k}
\end{equation}
for every $k\in\NN$. Since \(\pretan{X}{r}\) is tangent, \(em'\colon \pretan{X}{r} \rightarrow \pretan{X}{r'}\) is an isometry. Consequently, for every $\ty=\seq{y_k}{k}\in\tilde{X}_{\infty, \tr'}$, there is $\tx = \seq{x_n}{n} \in \sstable{X}{r}$ such that $\pi'(\ty)=\pi(em'(\tx))$. This equality and the commutativity of diagram \eqref{e1.1.8} imply $\pi'(\varphi_{\tr'}(\tx)) = \pi'(\ty)$. By definition of the natural projection \(\pi'\), we can rewrite the last equality as
\begin{equation}\label{p1.3.1e2}
\lim_{k\to\infty}\frac{d(x_{n_k}, y_{k})}{r_{n_k}}=0.
\end{equation}
Let us define the sequence $\tilde t=\seq{t_n}{n}$ by the rule:
\begin{equation}\label{p1.3.1e3}
t_n:= \begin{cases}
y_n & \mbox{if } n\in\{n_1, n_2, n_3, \ldots\}\\
x_n & \mbox{otherwise}.
\end{cases}
\end{equation}
Then \eqref{p1.3.1e1} evidently holds. From \eqref{p1.3.1e2} and \eqref{p1.3.1e3} it follows that $\tilde t \in \tilde{X}_{\infty}$ and $\lim\limits_{n\to\infty}\frac{d(x_n, t_n)}{r_n} = 0$. The last equality, Lemma~\ref{l1.2.4} and Lemma~\ref{l1.2.5} imply $\tilde{t} \in \sstable{X}{r}$.

\(\ref{p1.3.1s2} \Rightarrow \ref{p1.3.1s3}\). Let \ref{p1.3.1s2} hold. Then, for every \(\tr' \in \tldbf{R}\), the inclusion \(\sstable{X}{r'} \supseteq \sstable{X}{r}'\) and the maximality of \(\sstable{X}{r'}\) and \(\sstable{X}{r}'\) imply the equality $\tilde{X}'_{\infty, \tr} = \tilde{X}_{\infty, \tr'}$ and, consequently, $\varphi_{\tr'}:\sstable{X}{r} \rightarrow \tilde{X}_{\infty, \tr'}$ is onto. 

\(\ref{p1.3.1s3} \Rightarrow \ref{p1.3.1s4}\). This implication is trivial.

\(\ref{p1.3.1s4} \Rightarrow \ref{p1.3.1s1}\). Let \ref{p1.3.1s4} hold, \(\tr' \in \tldbf{R}\), \(\sstable{X}{r'} \supseteq \sstable{X}{r}'\) and let \(\varphi_{\tr'} \colon \sstable{X}{r} \to \sstable{X}{r'}\) be a surjection. By \eqref{e1.1.8}, \(\varphi_{\tr'}\) is a distance preserving mapping of pseudometric spaces \((\sstable{X}{r}, d_{\tr})\) and \((\sstable{X}{r'}, d_{\tr'})\). It follows directly from Definition~\ref{d2.5} that every distance preserving surjection is pseudoisometry. Thus, \(\varphi_{\tr'}\) is a pseudoisometry. Since diagram \eqref{e1.1.8} is commutative and \(\varphi_{\tr'}\) is a pseudoisometry, the mapping \(em' \colon \pretan{X}{r} \to \pretan{X}{r'}\) is an isometry by Corollary~\ref{c2.13}. Thus, \(\pretan{X}{r}\) is asymptotically tangent to \((X, d)\).
\end{proof}

\begin{remark}\label{r4.9}
Proposition~\ref{p1.3.1} is a modification of the corresponding statements from~\cite{DAK2013, ADK2016A}.
\end{remark}

\section{Gluing pretangent spaces}
\label{sec5}

Let \((X, d)\) be an unbounded metric space and let \(\tr = \seq{r_n}{n}\) be a scaling sequence. Write
\begin{equation}\label{e2.10}
d^{\tr} (\tx, \ty) = \limsup_{n\to\infty} \frac{d(x_n, y_n)}{r_n}
\end{equation}
for every pair \(\tx\), \(\ty \in Seq(X, \tr)\). From~\eqref{e1.1.2} it follows that
\[
d^{\tr}(\tx, \ty) \leqslant \tld{d}_{\tr}(\tx) + \tld{d}_{\tr}(\ty).
\]
Hence, \(d^{\tr}(\tx, \ty)\) is finite for all \(\tx\), \(\ty \in Seq(X, \tr)\). It is also clear that \(d^{\tr}(\tx, \tx) = 0\) holds for every \(\tx \in Seq(X, \tr)\). Now using the inequality
\begin{equation}\label{e2.3}
\limsup_{n\to\infty}\frac{d(x_n, z_n)}{r_n} \le \limsup_{n\to\infty} \frac{d(x_n, y_n)}{r_n} + \limsup_{n\to\infty} \frac{d(y_n, z_n)}{r_n},
\end{equation}
we see that the mapping
\[
d^{\tr} \colon Seq^2(X, \tr) \to \RR
\]
is a pseudometric on \(Seq(X, \tr)\). 

\begin{conjecture}\label{con5.1}
The pseudometric space \((Seq(X, \tr), d^{\tr})\) is complete for every unbounded metric space and every scaling sequence \(\tr\). 
\end{conjecture}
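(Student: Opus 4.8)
The plan is to verify completeness straight from the definition: I take an arbitrary Cauchy sequence $\seq{\tx^{(m)}}{m}$ in $(Seq(X, \tr), d^{\tr})$, where $\tx^{(m)} = \seq{x_n^{(m)}}{n}$, and produce a point $\tx \in Seq(X, \tr)$ with $\lim_{m\to\infty} d^{\tr}(\tx^{(m)}, \tx) = 0$. Since a Cauchy sequence converges as soon as one of its subsequences does, I first pass to a \emph{rapidly} Cauchy subsequence: choosing indices $m_1 < m_2 < \dots$ and writing $\ty^{(j)} := \tx^{(m_j)} = \seq{y_n^{(j)}}{n}$, I may assume $d^{\tr}(\ty^{(j)}, \ty^{(j+1)}) < 2^{-j}$ for every $j$. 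I want to stress at once where the difficulty lies: because $X$ is an \emph{arbitrary} unbounded metric space, it need not be complete, so for a fixed coordinate $n$ the sequence $\seq{y_n^{(j)}}{j}$ need not be Cauchy in $X$, and the naive coordinatewise limit $\lim_{j} y_n^{(j)}$ may simply fail to exist. Hence $\tx$ cannot be built coordinatewise, and this is the one genuinely delicate point of the argument.

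The observation that bypasses this obstacle is that $d^{\tr}$ is a $\limsup$, so it ignores any finite set of coordinates; this gives enough freedom to build $\tx$ by an interleaving (diagonal) construction in which each coordinate is copied from one of the $\ty^{(j)}$, with no limiting process in $X$ at all. Concretely, unwinding $d^{\tr}(\ty^{(j)}, \ty^{(j+1)}) < 2^{-j}$ produces a strictly increasing sequence $N_1 < N_2 < \dots$ of indices with $\frac{d(y_n^{(j)}, y_n^{(j+1)})}{r_n} < 2^{-j}$ for all $n \geq N_j$. I then define $\tx = \seq{x_n}{n}$ by $x_n := y_n^{(j)}$ whenever $N_j \leq n < N_{j+1}$ (and, say, $x_n := y_n^{(1)}$ for $n < N_1$). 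For the convergence $d^{\tr}(\ty^{(J)}, \tx) \to 0$, I fix $J$ and take any $n \geq N_J$; then $n \in [N_j, N_{j+1})$ for some $j \geq J$, and telescoping along $\ty^{(J)}, \ty^{(J+1)}, \dots, \ty^{(j)}$ gives
\[
\frac{d(y_n^{(J)}, x_n)}{r_n} \leq \sum_{i=J}^{j-1} \frac{d(y_n^{(i)}, y_n^{(i+1)})}{r_n} < \sum_{i=J}^{\infty} 2^{-i} = 2^{-J+1},
\]
where each summand is controlled using $n \geq N_j \geq N_i$ (the $N_i$ being increasing). Passing to $\limsup_{n\to\infty}$ yields $d^{\tr}(\ty^{(J)}, \tx) \leq 2^{-J+1} \to 0$.

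It remains to check that $\tx \in Seq(X, \tr)$, i.e. that $\lim_{n\to\infty} \frac{d(x_n, p)}{r_n}$ exists for a fixed $p \in X$. Since each $\ty^{(j)} \in Seq(X, \tr)$, the numbers $a_j := \lim_{n\to\infty} \frac{d(y_n^{(j)}, p)}{r_n}$ are well defined, and the reverse triangle inequality, via $|a_j - a_l| \leq \limsup_{n\to\infty} \frac{d(y_n^{(j)}, y_n^{(l)})}{r_n} = d^{\tr}(\ty^{(j)}, \ty^{(l)})$, shows that $\seq{a_j}{j}$ is Cauchy in $\RR$; set $a := \lim_{j\to\infty} a_j$. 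A standard three-term estimate then closes the argument: given $\varepsilon > 0$, I pick $J$ with $2^{-J+1} < \varepsilon/3$ and $|a_J - a| < \varepsilon/3$, then $N \geq N_J$ with $\bigl|\frac{d(y_n^{(J)}, p)}{r_n} - a_J\bigr| < \varepsilon/3$ for all $n \geq N$; combining these with the bound $\frac{d(x_n, y_n^{(J)})}{r_n} < 2^{-J+1}$ (valid for $n \geq N_J$) gives $\bigl|\frac{d(x_n, p)}{r_n} - a\bigr| < \varepsilon$ for all $n \geq N$. Thus $\lim_{n\to\infty}\frac{d(x_n, p)}{r_n} = a$, so $\tx \in Seq(X, \tr)$, and by the reduction to a rapidly Cauchy subsequence $\tx$ is the required limit of the original Cauchy sequence. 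As indicated, the only substantive obstacle is the unavailability of completeness of $X$, which is precisely why the interleaving construction replaces a coordinatewise limit.
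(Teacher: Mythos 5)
Your proof appears correct, but there is nothing in the paper to compare it with: the statement is precisely Conjecture~5.1, which the authors leave open (their Proposition~7.7 is explicitly conditional on its validity), so a correct argument here settles the conjecture affirmatively rather than reproves a known result. The one genuinely delicate point is the one you flag yourself: since \(X\) need not be complete, no coordinatewise limit is available, and your splicing construction is exactly the right substitute because \(d^{\tr}\) is a \(\limsup\) and therefore only sees tails. I checked the places where such an argument could break and each is sound: (a) from \(d^{\tr}(\ty^{(j)},\ty^{(j+1)})=\limsup_{n\to\infty} d(y_n^{(j)},y_n^{(j+1)})/r_n < 2^{-j}\) one really does get a tail index \(N_j\) with the \emph{pointwise} bound \(d(y_n^{(j)},y_n^{(j+1)})/r_n<2^{-j}\) for all \(n\ge N_j\), and the \(N_j\) may be taken strictly increasing; (b) in the telescoping estimate every summand with \(J\le i\le j-1\) is controlled because \(n\ge N_j>N_i\); (c) membership \(\tx\in Seq(X,\tr)\) genuinely requires a separate argument --- being at small \(d^{\tr}\)-distance from elements of \(Seq(X,\tr)\) does not by itself produce the limit \(\lim_{n\to\infty} d(x_n,p)/r_n\), and the paper's Lemma~4.6 is unavailable since \(d^{\tr}(\ty^{(J)},\tx)\) is small but not zero --- and your three-term estimate, resting on the fact that \(a_j=\Dist{y}^{(j)}\)-type limits form a Cauchy sequence in \(\RR\), supplies exactly this; (d) the final upgrade from convergence of the rapid subsequence to convergence of the whole Cauchy sequence uses only the triangle inequality, hence is valid in a pseudometric space. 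Two remarks: you should state explicitly that the strictly increasing \(N_j\) tend to infinity, so the intervals \([N_j,N_{j+1})\) together with \([1,N_1)\) partition \(\NN\) and \(\tx\) is well defined; and it is worth recording the payoff, namely that with this proof Proposition~7.7 becomes an unconditional theorem via Corollary~2.9.
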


\begin{proposition}\label{p5.3}
Let \((X, d)\) be an unbounded metric space, \((n_k)_{k \in \NN} \subset \NN\) be a strictly increasing sequence, \(\tr = (r_n)_{n \in \NN}\) be a scaling sequence and \(\tr' = (r_{n_k})_{k \in \NN}\) be the corresponding subsequence of \(\tr\). Then there is a unique mapping and \(\bm{\varphi}^{\tr'} \colon Seq(X, \tr) \to Seq(X, \tr')\) such that the following diagram
\begin{equation}\label{e2.13}
\ctdiagram{
\ctv 0,30: {\sstable{X}{r}} 
\ctv 120,30: {\sstable{X}{r'}}
\ctv 0,-30: {Seq(X, \tr)} 
\ctv 120,-30: {Seq(X, \tr')}
\ctet 0,30,120,30:{\varphi_{\tr'}}
\ctet 0,-30,120,-30:{\bm{\varphi}^{\tr'}}
\ctel 0,30, 0,-30:{In_{\tr}}
\cter 120,30, 120,-30:{In_{\tr'}}
}
\end{equation}
is commutative for every maximal self-stable \(\sstable{X}{r}\) and every maximal self-stable \(\sstable{X}{r'}\) satisfying \eqref{e1.1.7}, where \(In_{\tr}\) and \(In_{\tr'}\) are the corresponding inclusion maps,
\[
In_{\tr} (\tx) = \tx \text{ and } In_{\tr'} (\ty) = \ty
\]
for all \(\tx \in \sstable{X}{r}\), \(\ty \in \sstable{X}{r'}\), and \(\varphi_{\tr'}\) is defined as in \eqref{e1.1.8}. 
\end{proposition}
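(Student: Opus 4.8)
The plan is to define $\bm{\varphi}^{\tr'}$ by the explicit formula $\bm{\varphi}^{\tr'}(\tx) := \tx' = (x_{n_k})_{k \in \NN}$ for every $\tx = (x_n)_{n \in \NN} \in Seq(X, \tr)$, and then to verify that this single map simultaneously makes all the required diagrams commute and is the only map that does so. Since the formula is given directly on the whole of $Seq(X, \tr)$ with no auxiliary choices, there is nothing to check for consistency; the two genuine tasks are well-definedness of the target, commutativity, and uniqueness.

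First I would check that $\bm{\varphi}^{\tr'}$ lands in $Seq(X, \tr')$. This is precisely the containment $\{\tx' \colon \tx \in Seq(X, \tr)\} \subseteq Seq(X, \tr')$ already recorded in the discussion preceding Definition~\ref{d1.1.4}, together with the identity \eqref{e3.3}; passing to the subsequence indexed by $(n_k)_{k\in\NN}$ preserves the existence of the defining finite limit, so no new estimate is needed.

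Next, for commutativity I would fix an arbitrary admissible pair, i.e.\ a maximal self-stable $\sstable{X}{r}$ and a maximal self-stable $\sstable{X}{r'}$ satisfying \eqref{e1.1.7}. For $\tx \in \sstable{X}{r}$ the lower-then-right path yields $\bm{\varphi}^{\tr'}(In_{\tr}(\tx)) = \bm{\varphi}^{\tr'}(\tx) = \tx'$, while the right-then-lower path yields $In_{\tr'}(\varphi_{\tr'}(\tx)) = In_{\tr'}(\tx') = \tx'$; note that $\tx' \in \sstable{X}{r'}$ by \eqref{e1.1.7}, so the composition through the top-right corner is legitimate. The two paths agree, so diagram~\eqref{e2.13} commutes for every admissible pair, which establishes existence.

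The one point requiring an argument is uniqueness, and I expect the main obstacle to be seeing that the family of commutativity constraints — one for each maximal self-stable $\sstable{X}{r}$ — already pins down $\bm{\varphi}^{\tr'}$ at \emph{every} point of $Seq(X, \tr)$, rather than only on a single $\sstable{X}{r}$. The key observation is that every $\tx \in Seq(X, \tr)$ belongs to some maximal self-stable subset: the singleton $\{\tx\}$ is trivially self-stable, so by Zorn's lemma (in the form stated just after Definition~\ref{d1.1.2}) there is a maximal self-stable $\sstable{X}{r}$ with $\tx \in \sstable{X}{r}$, and a corresponding $\sstable{X}{r'}$ exists by \eqref{e1.1.7}. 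If $\psi \colon Seq(X, \tr) \to Seq(X, \tr')$ is any map making all such diagrams commute, then applying commutativity to this pair forces $\psi(\tx) = \psi(In_{\tr}(\tx)) = In_{\tr'}(\varphi_{\tr'}(\tx)) = \tx'$. As $\tx$ was arbitrary, $\psi = \bm{\varphi}^{\tr'}$, which completes the uniqueness argument and the proof.
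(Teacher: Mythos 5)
Your proposal is correct and follows essentially the same route as the paper: the paper defines the same subsequence map \(\bm{\varphi}^{\tr'}(\tx) = (x_{n_k})_{k\in\NN}\), observes that commutativity of diagram~\eqref{e2.13} is immediate, and deduces uniqueness from the fact that the maximal self-stable sets \(\sstable{X}{r}\) cover \(Seq(X, \tr)\). Your write-up merely makes explicit what the paper leaves terse — the well-definedness via \eqref{e3.3} and the Zorn's-lemma argument showing every \(\tx\) lies in some \(\sstable{X}{r}\), which is exactly what the paper's covering remark relies on.
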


\begin{proof}
If we define \(\bm{\varphi}^{\tr'} \colon Seq(X, \tr) \to Seq(X, \tr')\) as \(\bm{\varphi}^{\tr'}(\tx) = \seq{x_{n_k}}{k}\) for every \(\tx = \seq{x_n}{n} \in Seq(X, \tr)\), then diagram~\eqref{e2.13} is evidently commutative for all \(\sstable{X}{r}\) and \(\sstable{X}{r'}\) satisfying \eqref{e1.1.7}. Since the set of all \(\sstable{X}{r}\) is a cover of \(Seq(X, \tr)\), the desirable mapping \(\bm{\varphi}^{\tr'}\) is unique.
\end{proof}

Let us denote by \((\mathbf{\Omega}_{\infty, \tr}^X, \bm{\rho})\) the metric identification of the pseudometric space \((Seq(X, \tr), d^{\tr})\). It is easy to prove that the equivalence
\[
(d^{\tr}(\tx, \ty) = 0) \Leftrightarrow \left(\parbox{.6\textwidth}{\(\tx\) and \(\ty\) are mutually stable with respect to \(\tr\) and \(d_{\tr}(\tx, \ty) = 0\)}\right)
\]
is valid for all \(\tx\), \(\ty \in Seq(X, \tr)\). Hence, from Lemma~\ref{l1.2.5} it follows that each asymptotically pretangent \(\pretan{X}{r}\) is a subset of \(\mathbf{\Omega}_{\infty, \tr}^X\),
\begin{equation}\label{e2.11}
\pretan{X}{r} \subseteq \mathbf{\Omega}_{\infty, \tr}^X.
\end{equation}
Moreover, if \(\pretan{X}{r}\) is the metric identification of some \(\sstable{X}{r}\), then, for all \(\tx\), \(\ty \in \sstable{X}{r}\), we have
\[
\dist{x}{y} = \lim_{n\to\infty} \frac{d(x_n, y_n)}{r_n} = \limsup_{n\to\infty} \frac{d(x_n, y_n)}{r_n} = d^{\tr}(\tx, \ty)
\]
because the set \(\sstable{X}{r}\) is a self-stable subset of \(Seq(X, \tr)\). Consequently, for every asymptotically pretangent space \((\pretan{X}{r}, \rho)\), the metric \(\rho\) is a restriction of the metric \(\bm{\rho}\) on the set \(\pretan{X}{r}\),
\[
\rho = \bm{\rho}|_{\pretan{X}{r} \times \pretan{X}{r}}.
\]
Since, for every \(\tx \in Seq(X, \tr)\), the one-point set \(\{\tx\}\) is a self-stable subset of \(Seq(X, \tr)\) and, consequently, belongs to some \(\sstable{X}{r}\), the set of all \(\pretan{X}{r}\) is a cover of the metric space \((\bfpretan{X}{r}, \bm{\rho})\).

Moving on to the metric identifications of the pseudometric spaces \((Seq(X, \tr), d^{\tr})\) and \((Seq(X, \tr'), d^{\tr'})\), we obtain the unique mapping \(\mathbf{Em}' \colon \bfpretan{X}{r} \to \bfpretan{X}{r'}\) with the commutative diagram 
\begin{equation}\label{e2.14}
\ctdiagram{
\ctv 0,30: {Seq(X, \tr)} 
\ctv 120,30: {\mathbf{\Omega}_{\infty, \tr}^X}
\ctv 0,-30: {Seq(X, \tr')} 
\ctv 120,-30: {\mathbf{\Omega}_{\infty, \tr'}^X}
\ctet 0,30,120,30:{\bm{\pi}}
\ctet 0,-30,120,-30:{\bm{\pi}'}
\ctel 0,30, 0,-30:{\bm{\varphi}^{\tr'}}
\cter 120,30, 120,-30:{\mathbf{Em}'}
},
\end{equation}
where \(\bm{\varphi}^{\tr'}\) is defined as in \eqref{e2.13}, and
\begin{align*}
\bm{\pi} (\tx) &:= \{\ty \in Seq(X, \tr) \colon d^{\tr} (\tx, \ty) = 0\},\\
\bm{\pi}' (\tz) &:= \{\tld{t} \in Seq(X, \tr') \colon d^{\tr'} (\tld{t}, \tz) = 0\}
\end{align*}
for every \(\tx \in Seq(X, \tr)\) and every \(\tz \in Seq(X, \tr')\) (cf.~\eqref{e1.1.9}).

Similarly to diagram \eqref{e2.13}, we obtain the commutative diagrams
\begin{equation}\label{e2.15}
\ctdiagram{
\ctv 0,30: {\sstable{X}{r}} 
\ctv 120,30: {\pretan{X}{r}}
\ctv 0,-30: {Seq(X, \tr)} 
\ctv 120,-30: {\mathbf{\Omega}_{\infty, \tr}^X}
\ctet 0,30,120,30:{\pi}
\ctet 0,-30,120,-30:{\bm{\pi}}
\ctel 0,30, 0,-30:{In_{\tr}}
\cter 120,30, 120,-30:{Em}
}
\end{equation}
and
\begin{equation}\label{e2.16}
\ctdiagram{
\ctv 0,30: {\sstable{X}{r'}} 
\ctv 120,30: {\pretan{X}{r'}}
\ctv 0,-30: {Seq(X, \tr')} 
\ctv 120,-30: {\mathbf{\Omega}_{\infty, \tr'}^X}
\ctet 0,30,120,30:{\pi'}
\ctet 0,-30,120,-30:{\bm{\pi}'}
\ctel 0,30, 0,-30:{In_{\tr'}}
\cter 120,30, 120,-30:{Em'}
}
\end{equation}
for the identical embeddings 
\[
Em \colon \pretan{X}{r} \to \mathbf{\Omega}_{\infty, \tr}^X \quad \text{and} \quad Em' \colon \pretan{X}{r'} \to \mathbf{\Omega}_{\infty, \tr'}^X.
\]

Since the set of all pretangent spaces \(\pretan{X}{r}\) is a cover of the metric space \((\mathbf{\Omega}_{\infty, \tr}^X, \bm{\rho})\), the mapping \(\mathbf{Em}' \colon \bfpretan{X}{r} \to \bfpretan{X}{r'}\) (see \eqref{e2.14}) can also be characterized as a unique mapping for which the diagram
\begin{equation}\label{e2.17}
\ctdiagram{
\ctv 0,30: {\pretan{X}{r}} 
\ctv 120,30: {\pretan{X}{r'}}
\ctv 0,-30: {\mathbf{\Omega}_{\infty, \tr}^X}
\ctv 120,-30: {\mathbf{\Omega}_{\infty, \tr'}^X}
\ctet 0,30,120,30:{em'}
\ctet 0,-30,120,-30:{\mathbf{Em}'}
\ctel 0,30, 0,-30:{Em}
\cter 120,30, 120,-30:{Em'}
}
\end{equation}
is commutative for all pretangent spaces \(\pretan{X}{r}\), where \(em' \colon \pretan{X}{r} \to \pretan{X}{r'}\) is defined as in \eqref{e1.1.8}. 

It is interesting to note that in diagram \eqref{e2.17} the mappings \(Em\), \(Em'\) and \(em'\) are isometric embeddings, but \(\mathbf{Em}'\) is a Lipschitz mapping with constant \(1\), i.e., the inequality 
\[
\bm{\rho}'(\mathbf{Em}'(\alpha), \mathbf{Em}'(\beta)) \leq \bm{\rho}(\alpha, \beta)
\]
holds for all \(\alpha\), \(\beta \in (\mathbf{\Omega}_{\infty, \tr'}^X, \bm{\rho})\). Moreover, unlike Proposition~\ref{p1.3.1}, \(\mathbf{Em}' \colon \mathbf{\Omega}_{\infty, \tr}^X \to \mathbf{\Omega}_{\infty, \tr'}^X\) may be neither injective nor surjective even if \(\bm{\varphi}^{\tr'} \colon Seq(X, \tr) \to Seq(X, \tr')\) (see \eqref{e2.14}) is onto for every \(\tr' \in \tilde{\mathbf{R}}\).

For more detailed description of situation, we shall use the concept of porosity at infinity.

\begin{definition}\label{d5.1}
Let \(E \subseteq \RR_{+}\). The porosity of \(E\) at infinity is the quantity
\begin{equation}\label{d5.1:e1}
p^{+}(E, \infty) = \limsup_{h \to \infty} \frac{l(\infty, h, E)}{h},
\end{equation}
where \(l(\infty, h, E)\) is the length of the longest interval in the set \([0, h] \setminus E\). The set \(E\) is \emph{porous at infinity} if \(p^{+}(E, \infty) > 0\), and, respectively, \(E\) is \emph{nonporous at infinity} if \(p^{+}(E, \infty) = 0\).
\end{definition}

Let \((X, d)\) be a metric space, and let \(p \in X\). Write 
\begin{equation}\label{e5.9}
Sp(X) = \{d(x, p) \colon x \in X\}.
\end{equation}

\begin{theorem}\label{t5.2}
Let \((X, d)\) be an unbounded metric space and let \(p \in X\). Then the following statements are equivalent:
\begin{enumerate}
\item \label{t5.2:s1} The set \(Sp(X)\) is nonporous at infinity, \(p^{+}(Sp(X), \infty) = 0\).
\item \label{t5.2:s2} The mapping \(\bm{\varphi}^{\tr'} \colon Seq(X, \tr) \to Seq(X, \tr')\) is a surjection for every scaling sequence \(\tr\) and every \(\tr' \in \tldbf{R}(\tr)\).
\item \label{t5.2:s3} The equality
\begin{equation}\label{t5.2:e0}
Sp(\bfpretan{X}{r_1}) = Sp(\bfpretan{X}{r_2})
\end{equation}
holds for any two scaling sequences \(\tr_1\) and \(\tr_2\), where 
\[
Sp(\bfpretan{X}{r_i}) := \{\bm{\rho}_i(\nu, \nu_{0, \tr_i}) \colon \nu \in \bfpretan{X}{r_i}\},
\]
\((\bfpretan{X}{r_i}, \bm{\rho}_i)\) is the metric identification of the pseudometric space \((Seq(X, \tr_i), d^{\tr_i})\) and 
\[
\nu_{0, \tr_i} = \sstable{X}{r_i}^{0} = \{\tx \in Seq(X, \tr_i) \colon \Dist[r_i]{x} = 0\}
\]
is the distinguished point of \(\bfpretan{X}{r_i}\), \(i = 1\), \(2\).
\end{enumerate}
\end{theorem}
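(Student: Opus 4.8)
The plan is to funnel all three statements through a single geometric property of the distance set $Sp(X)$. Writing $\operatorname{dist}(t, Sp(X)) := \inf\{|t - s| \colon s \in Sp(X)\}$, consider the condition
\[
\lim_{t \to \infty} \frac{\operatorname{dist}(t, Sp(X))}{t} = 0,
\]
which I will denote by $(\ast)$. I would prove (i) $\Leftrightarrow (\ast)$, then $(\ast) \Rightarrow$ (ii) and $(\ast) \Rightarrow$ (iii), and finally $\neg(\ast) \Rightarrow \neg$(ii) and $\neg(\ast) \Rightarrow \neg$(iii), so that all three conditions become equivalent to $(\ast)$.

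The reformulation (i) $\Leftrightarrow (\ast)$ is where I expect the main difficulty. Since $X$ is unbounded, $Sp(X)$ is unbounded, so each sufficiently large $t \notin Sp(X)$ lies in a bounded complementary interval (gap) $(a, b)$ with $\operatorname{dist}(t, Sp(X)) \le (b - a)/2$ and $(a, b) \subseteq [0, b]$, whence $b - a \le l(\infty, b, Sp(X))$. Assuming (i), i.e.\ $l(\infty, h, Sp(X))/h \to 0$, the inequality $a = b - (b - a) \ge b(1 - o(1))$ forces $t = b(1 + o(1))$, so $\operatorname{dist}(t, Sp(X))/t \le \tfrac{1}{2} \, l(\infty, b, Sp(X))/b \cdot (1 + o(1)) \to 0$, which is $(\ast)$. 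For the converse, if (i) fails there are $h_j \to \infty$ and $\delta > 0$ together with gaps $(a_j, b_j) \subseteq [0, h_j]$ of length at least $\delta h_j$; the midpoint $m_j \in [\delta h_j / 2, h_j]$ tends to infinity and satisfies $\operatorname{dist}(m_j, Sp(X))/m_j \ge \delta/2$, contradicting $(\ast)$. The delicate point is to control the endpoints of the gap containing a large $t$ so that the comparison between $\operatorname{dist}(t, Sp(X))/t$ and $l(\infty, h, Sp(X))/h$ is quantitatively tight in both directions.

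Next I would identify the relevant distance set of a gluing space. Using the constant sequence $(p, p, \dots) \in \sstable{X}{r}^{0} = \nu_{0, \tr}$ and the formula $d^{\tr}(\tx, \tz) = \limsup_{n} d(x_n, z_n)/r_n$ together with the triangle inequality and $d(z_n, p)/r_n \to 0$, one obtains $\bm{\rho}(\bm{\pi}(\tx), \nu_{0, \tr}) = \Dist{x}$ for every $\tx \in Seq(X, \tr)$. Hence $Sp(\bfpretan{X}{r}) = \{\Dist{x} \colon \tx \in Seq(X, \tr)\}$ is exactly the set of $c \ge 0$ for which some $\seq{x_n}{n} \subset X$ satisfies $d(x_n, p)/r_n \to c$; in particular $0$ always lies in it, and for $c > 0$ membership holds if and only if $\operatorname{dist}(c r_n, Sp(X))/r_n \to 0$. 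Granting $(\ast)$, for any $\tr$ and any $c > 0$ I pick $x_n$ with $|d(x_n, p) - c r_n| \le \operatorname{dist}(c r_n, Sp(X)) + 1/n$; since $\operatorname{dist}(c r_n, Sp(X))/r_n = c \, \operatorname{dist}(c r_n, Sp(X))/(c r_n) \to 0$, the ratio tends to $c$, so $Sp(\bfpretan{X}{r}) = \RR_{+}$ for every $\tr$ and (iii) follows at once. Applying the same filling procedure only on the indices lying outside the subsequence (taking $x_n = p$ there when $c = 0$) extends an arbitrary $\ty \in Seq(X, \tr')$ to a preimage $\tx \in Seq(X, \tr)$ of $\ty$ under $\bm{\varphi}^{\tr'}$, which gives (ii).

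To close the equivalences I would argue the contrapositives through explicit gaps. Assume $(\ast)$ fails, and fix $\beta > 0$ and $t_j \to \infty$ with $\operatorname{dist}(t_j, Sp(X))/t_j \ge \beta$, so that $|d(x, p) - t_j| \ge \operatorname{dist}(t_j, Sp(X)) \ge \beta t_j$ for every $x \in X$. For $\neg$(iii) I set $\tr_1 = \seq{t_j}{j}$, whence $|d(x, p)/t_j - 1| \ge \beta$ shows $1 \notin Sp(\bfpretan{X}{r_1})$, while choosing $w_n \in X$ with $d(w_n, p) \to \infty$ and $\tr_2 = (d(w_n, p))_{n \in \NN}$ gives $d(w_n, p)/r_n = 1$, so $1 \in Sp(\bfpretan{X}{r_2})$ and $Sp(\bfpretan{X}{r_1}) \neq Sp(\bfpretan{X}{r_2})$. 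For $\neg$(ii) I interleave the two sequences, putting $r_{2k - 1} = d(w_k, p)$ and $r_{2k} = t_k$, and take $\tr' = (r_{2k-1})_{k \in \NN} \in \tldbf{R}$ and $\ty = \seq{w_k}{k}$. Then $\ty \in Seq(X, \tr')$, yet any $\tx \in Seq(X, \tr)$ with $\bm{\varphi}^{\tr'}(\tx) = \ty$ would have all odd-indexed ratios equal to $1$, hence full limit $1$, forcing $d(x_{2k}, p)/t_k \to 1$, which is impossible since $|d(x_{2k}, p)/t_k - 1| \ge \beta$. Thus $\bm{\varphi}^{\tr'}$ is not onto, and combining everything yields (i) $\Leftrightarrow (\ast) \Leftrightarrow$ (ii) and (i) $\Leftrightarrow (\ast) \Leftrightarrow$ (iii).
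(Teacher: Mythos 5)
Your proposal is correct, but it is organized around a genuinely different decomposition than the paper's. The paper proves the cycle (i) \(\Rightarrow\) (ii) \(\Rightarrow\) (iii) \(\Rightarrow\) (i): its first step is a filling construction in the same spirit as yours (working with nonporosity directly, via a sequence \(s_m \nearrow 1\) and points of \(Sp(X)\) in the intervals \((s_k t r_n, t r_n)\)); its last step is the same kind of gap-based counterexample as your \(\neg(\ast) \Rightarrow \neg\)(iii); but its middle step (ii) \(\Rightarrow\) (iii) is a purely formal transport argument that never revisits \(Sp(X)\): interleave \(\tr_1\) and \(\tr_2\) into one scaling sequence \(\tr\), lift a point of \(Seq(X, \tr_1)\) to \(Seq(X, \tr)\) using surjectivity, push it down to \(Seq(X, \tr_2)\), and observe that all three distances to the distinguished point coincide. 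You never use (ii) to obtain (iii); instead you route all statements through the hub condition \((\ast)\), namely \(\operatorname{dist}(t, Sp(X)) = o(t)\) as \(t \to \infty\), proving (i) \(\Leftrightarrow (\ast)\) and then deriving or refuting (ii) and (iii) independently from \((\ast)\) or \(\neg(\ast)\). This costs you an extra implication --- the direct refutation of (ii) by interleaving \((d(w_k, p))_{k \in \NN}\) with \((t_k)_{k \in \NN}\), which the paper's cyclic scheme never needs --- but it buys two things the paper leaves implicit: a clean standalone characterization of nonporosity at infinity in terms of relative distance to the set, and the explicit identification \(Sp(\bfpretan{X}{r}) = \{\Dist{x} \colon \tx \in Seq(X, \tr)\}\) together with the stronger conclusion that under (i) this spectrum equals \(\RR_{+}\) for \emph{every} scaling sequence \(\tr\), which makes (iii) trivially true rather than a consequence of (ii). Two small points to tidy when writing this up: since \(l(\infty, h, Sp(X))\) is defined as a supremum of gap lengths that need not be attained, in \(\neg\)(i) \(\Rightarrow \neg(\ast)\) you should pick gaps of length at least, say, half of \(l(\infty, h_j, Sp(X))\); and in (i) \(\Rightarrow (\ast)\) note that \(0 = d(p,p) \in Sp(X)\) and that \(Sp(X)\) is unbounded, so the gap around a large \(t \notin \overline{Sp(X)}\) has finite endpoints, while \(t \in \overline{Sp(X)}\) gives \(\operatorname{dist}(t, Sp(X)) = 0\) at once. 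Neither affects the validity of the argument.
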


\begin{proof}
\(\ref{t5.2:s1} \Rightarrow \ref{t5.2:s2}\). Suppose that \(Sp(X)\) is nonporous at infinity, \(p^{+}(Sp(X), \infty) = 0\). Let \(\tr = \seq{r_n}{n}\) be an arbitrary scaling sequence, \(\tr' = \seq{r_{n_k}}{k} \in \tldbf{R}(\tr)\) and let \(\tx = \seq{x_k}{k} \in Seq(X, \tr')\). We must find \(\ty = \seq{y_n}{n} \in Seq(X, \tr)\) such that \(\bm{\varphi}^{\tr'}(\ty) = \tx\), i.e., 
\begin{equation}\label{t5.2:e1}
\seq{y_{n_k}}{k} = \seq{x_k}{k}.
\end{equation}

If the sequence \(\tx = \seq{x_k}{k}\) belongs to the set 
\[
\sstable{X}{r'}^{0} = \{\tx \in Seq(X, \tr') \colon \tld{d}_{\tr'}(\tx) = 0\},
\]
then the existence of a desired \(\ty\) follows from statement \ref{p1.2.2s9} of Proposition~\ref{p1.2.2}. 

Let us consider the case \(\tld{d}_{\tr'}(\tx) > 0\). To simplify notation, denote
\begin{equation}\label{t5.2:e2}
t := \tld{d}_{\tr'}(\tx).
\end{equation}
Using \eqref{d5.1:e1}, we can rewrite the equality \(p^{+}(Sp(X), \infty) = 0\) in the form
\[
\lim_{h \to \infty} \frac{l(\infty, h, Sp(X))}{h} = 0.
\]
Since \(t > 0\), the last equality implies
\begin{equation}\label{t5.2:e3}
0 = \lim_{n \to \infty} \frac{l(\infty, tr_n, Sp(X))}{tr_n} = \lim_{n \to \infty} \frac{l(\infty, tr_n, Sp(X))}{r_n}.
\end{equation}
Let us consider a sequence \(\seq{s_m}{m} \subseteq (0, 1)\) such that \(\lim_{m \to \infty} s_m = 1\) and \(s_m < s_{m+1}\) for every \(m \in \NN\). It follows from the definition of \(l(\infty, h, E)\), \eqref{t5.2:e3} and the definition of \(\seq{s_m}{m}\) that there is a sequence 
\[
j_1 < j_2 < \ldots < j_k < \ldots
\]
of positive integer numbers such that 
\[
(s_1 t r_n, t r_n) \cap Sp(X) \neq \varnothing
\]
for all \(n > j_1\),
\[
(s_2 t r_n, t r_n) \cap Sp(X) \neq \varnothing
\]
for all \(n > j_2\), \ldots, 
\[
(s_k t r_n, t r_n) \cap Sp(X) \neq \varnothing
\]
for all \(n > j_k\) nd so on. Hence, we can find a sequence \(\tz = \seq{z_n}{n} \subseteq X\) satisfying the double inequality
\begin{equation}\label{t5.2:e4}
s_k t \leqslant \frac{d(p, z_n)}{r_n} \leqslant t 
\end{equation}
whenever \(n > j_k\). From \eqref{t5.2:e4} it follows that
\begin{equation}\label{t5.2:e5}
s_k t \leqslant \liminf_{n \to \infty} \frac{d(p, z_n)}{r_n} \leqslant \limsup_{n \to \infty} \frac{d(p, z_n)}{r_n} \leqslant t.
\end{equation}
Now letting \(k \to \infty\) and using the limit relation \(\lim_{k \to \infty} s_k = 1\), we obtain from \eqref{t5.2:e5} the equality
\begin{equation}\label{t5.2:e6}
t = \lim_{n \to \infty} \frac{d(p, z_n)}{r_n}.
\end{equation}

Let us define a sequence \(\ty = \seq{y_n}{n}\) by the rule
\begin{equation}\label{t5.2:e7}
y_n = \begin{cases}
x_n & \text{if } n \in \{n_1, \ldots, n_k, \ldots\},\\
z_n & \text{otherwise}.
\end{cases}
\end{equation}
Limit relations \eqref{t5.2:e2} and \eqref{t5.2:e6} imply 
\[
\lim_{n \to \infty} \frac{d(y_n, p)}{r_n} = t.
\]
Hence, \(\ty\) belongs to \(Seq(X, \tr)\). Now \eqref{t5.2:e1} follows from \eqref{t5.2:e7}.

\(\ref{t5.2:s2} \Rightarrow \ref{t5.2:s3}\). Suppose statement \ref{t5.2:s2} holds. Let \(\tr_1 = \seq{r_{1, n}}{n}\) and \(\tr_2 = \seq{r_{2, n}}{n}\) be two arbitrary scaling sequences and let \(\tr = \seq{r_m}{m}\) be a sequence defined by the rule
\[
r_m = \begin{cases}
r_{1, n} & \text{if \(m\) is odd and \(m = 2n-1\)},\\
r_{2, n} & \text{if \(m\) is even and \(m = 2n\)},\\
\end{cases}
\]
i.e., 
\[
(r_1, r_2, r_3, r_4, r_5, \ldots) = (r_{1, 1}, r_{2, 1}, r_{1, 2}, r_{2, 2}, r_{1, 3}, \ldots).
\]
Then \(\tr_1\) and \(\tr_2\) are subsequences of \(\tr\). Moreover, the equalities
\[
\lim_{n \to \infty} r_{1, n} = \lim_{n \to \infty} r_{2, n} = \infty
\]
imply the equality \(\lim_{m \to \infty} r_{m} = \infty\). Thus, \(\tr\) also is a scaling sequence. 

To prove equality \eqref{t5.2:e0}, let us consider an arbitrary \(\nu_1 \in \bfpretan{X}{r_1}\). Then there is \(\tx^1 \in Seq(X, \tr_1)\) such that
\[
\bm{\pi}^1(\tx^{1}) = \nu_1,
\]
where \(\bm{\pi}^1 \colon Seq(X, \tr_1) \to \bfpretan{X}{r_1}\) is the natural projection of the pseudometric space \((Seq(X, \tr_1), d^{\tr_1})\) on its metric identification \((\bfpretan{X}{r_1}, \bm{\rho}_1)\). Using statement \ref{t5.2:s2} with \(\tr' = \tr_1\), we can find \(\tx \in Seq(X, \tr)\) such that
\begin{equation*}
\bm{\varphi}^{\tr_1}(\tx) = \tx^{1}.
\end{equation*}
Let \(\tz\) be an arbitrary point of \(\sstable{X}{r_1}^0\). Then the equality
\[
\Dist{x} = \Dist[r_1]{x^1}
\]
holds by \eqref{e3.3} and, in addition, we have
\[
\Dist[r_1]{x^1} = d_{\tr_1}(\tx^1, \tz)
\]
by statement \ref{p1.2.2s9} from Proposition~\ref{p1.2.2}. Since \(\tx^1\) and \(\tz\) are mutually stable with respect to \(\tr_1\), we also have
\[
d_{\tr_1}(\tx^1, \tz) = d^{\tr_1}(\tx^1, \tz).
\]
Example~\ref{ex2.5} with \((Y, \rho) = (Seq(X, \tr_1), d^{\tr_1})\) implies
\begin{equation}\label{t5.2:e14}
\Dist{x} = d_{\tr_1}(\tx^1, \tz) = \bm{\rho}_1(\nu_1, \nu_{0, \tr_1}).
\end{equation}

Let us define now \(\tx^2 \in Seq(X, \tr_2)\) as 
\[
\tx^2 = \bm{\varphi}^{\tr_2} (\tx),
\]
where \(\tx\) is the same as in \eqref{t5.2:e14}, and write
\[
\nu_2 := \bm{\pi}^2(\tx^2).
\]
Then, similarly to \eqref{t5.2:e14}, we obtain
\[
\Dist{x} = \bm{\rho}_2(\nu_2, \nu_{0, \tr_1}).
\]
Thus, for every \(\nu_1 \in \bfpretan{X}{r_1}\), we have \(\bm{\rho}_1(\nu_1, \nu_{0, \tr_1}) \in Sp(\bfpretan{X}{r_2})\), that implies
\[
Sp(\bfpretan{X}{r_1}) \subseteq Sp(\bfpretan{X}{r_2}).
\]
The converse inclusion 
\[
Sp(\bfpretan{X}{r_2}) \subseteq Sp(\bfpretan{X}{r_1})
\]
can be proved similarly. Equality~\eqref{t5.2:e0} follows.

\(\ref{t5.2:s3} \Rightarrow \ref{t5.2:s1}\). Let statement \ref{t5.2:s3} hold. We need prove the equality \(p^{+}(Sp(X), \infty) = 0\). Suppose contrary that 
\[
t:= p^{+}(Sp(X), \infty) > 0.
\]
Then there exists a sequence of open intervals 
\[
(a_1, b_1), \ldots, (a_n, b_n), \ldots 
\]
such that:
\begin{gather}
\label{t5.2:e8.1}
\lim_{n \to \infty} b_n = \infty; \\
0 < a_1 < b_1 < a_2 < b_2 < \ldots < a_n < b_n < \ldots; \notag\\
\label{t5.2:e8}
(a_n, b_n) \cap Sp(X) = \varnothing
\end{gather}
for every \(n \in \NN\); 

\noindent and
\begin{equation}\label{t5.2:e9}
\lim_{n \to \infty} \frac{b_n - a_n}{b_n} = p^{+}(Sp(X), \infty) = t > 0.
\end{equation}
From~\eqref{t5.2:e8.1} it follows that we may define a scaling sequence \(\tr_1 = \seq{r_{1, n}}{n}\) by \(r_{1, n} = b_n\), \(n \in \NN\). Let \(\tz = \seq{z_n}{n}\) belong to \(Seq(X, \tr_1)\). Then, using \eqref{t5.2:e8}, we see that there is a subsequence \(\seq{z_{n_k}}{k}\) of \(\tz\) such that either
\begin{equation}\label{t5.2:e10}
d(p, z_{n_k}) \leqslant a_{n_k}
\end{equation}
for all \(k \in \NN\), or 
\begin{equation}\label{t5.2:e11}
d(p, z_{n_k}) \geqslant b_{n_k}
\end{equation}
for all \(k \in \NN\). In the first case, using \eqref{t5.2:e9} and \eqref{t5.2:e10}, we obtain
\[
\Dist[r_1]{z} = \lim_{n \to \infty} \frac{d(z_n, p)}{b_n} = \lim_{k \to \infty} \frac{d(z_{n_k}, p)}{b_{n_k}} \leqslant \lim_{n \to \infty} \frac{a_n}{b_n} = 1 - t,
\]
because
\[
t = \lim_{n \to \infty} \frac{b_n - a_n}{b_n} = 1 - \lim_{n \to \infty} \frac{a_n}{b_n}.
\]
Analogously, if we have \eqref{t5.2:e11} for all \(k \in \NN\), then the inequality
\[
\Dist[r_1]{z} \geqslant 1
\]
holds. Thus, for every \(\tz \in Seq(X, \tr_1)\), we have
\[
\Dist[r_1]{z} \notin (1-t, 1),
\]
that implies 
\begin{equation}\label{t5.2:e12}
Sp(\bfpretan{X}{r_1}) \cap (1-t, 1) = \varnothing.
\end{equation}

Let \(c\) be a point of the open interval \((1-t, 1)\). By statement~\ref{p1.2.1s2} of Proposition~\ref{p1.2.1}, there is a scaling sequence \(\tr = \seq{r_n}{n}\) and \(\tx \in Seq(X, \tr)\) such that \(\Dist{x} = 1\). Then, for the scaling sequence \(\tr_2 = \seq{r_{2, n}}{n}\) defined by 
\[
r_{2, n} = \frac{1}{c} r_n, \quad n \in \NN,
\]
we have the equality 
\[
\Dist[r_2]{x} = c.
\]
Thus, \(c\) belongs to \(Sp(\bfpretan{X}{r_2}) \cap (1-t, 1)\). The last statement and \eqref{t5.2:e12} imply
\[
Sp(\bfpretan{X}{r_1}) \neq Sp(\bfpretan{X}{r_2})
\]
contrary to statement \ref{t5.2:s3}.
\end{proof}

\begin{remark}\label{r5.3}
It should be noted here that the porosity of the set \(Sp(X)\) at infinity does not depend on the choice of point \(p\) in \eqref{e5.9}, although we may have 
\[
\{d(x, a) \colon x \in X\} \neq \{d(x, b) \colon x \in X\}
\]
for different \(a\), \(b \in X\).
\end{remark}

The standard definition of porosity at a finite point can be found in \cite{Thomson1985}. See \cite{ADK2016A, BD2014RAE, BDK2013JA, BD2014AASFM} for some applications of the porosity to studies of the infinitesimal properties of metric spaces.

\section{Asymptotically pretangent spaces to real line and half-line}
\label{sec6}

In the first proposition we will describe the structure of \(\sstable{X}{r}\) for \(X = \RR\) or \(X = \RR_{+}\).

\begin{proposition}\label{p1.3.2}
Let $(X,d)$ be a metric space with $X = \RR$ or \(X = \RR_{+}\) endowed with the usual metric $d(x,y) = |x-y|$. Then the following statements hold for every scaling sequence \(\tr\), every maximal self-stable $\sstable{X}{r}$ and each $\tb = \seq{b_n}{n} \in \sstable{X}{r}$ satisfying
\begin{equation}\label{p1.3.2:e1}
\Dist{b} = \lim_{n\to\infty} \frac{|b_n|}{r_n} > 0.
\end{equation}
\begin{enumerate}
\item\label{p1.3.2:s1} For every $\ty = \seq{y_n}{n} \in \sstable{X}{r}$ there is a finite limit $\lim_{n\to\infty} \frac{y_n}{b_n}$ and, conversely, if, for $\ty \in \tilde{X}_{\infty}$, this limit exists and is finite, then $\ty \in \sstable{X}{r}$.
\item\label{p1.3.2:s2} For every two $\tx = \seq{x_n}{n}$ and $\ty = \seq{y_n}{n}$ from $\sstable{X}{r}$ the equality $d_{\tr} (\tx, \ty) = 0$ holds if and only if 
$$
\lim_{n\to\infty} \frac{x_n}{b_n} = \lim_{n\to\infty} \frac{y_n}{b_n}.
$$
\item\label{p1.3.2:s3} The pretangent space $\pretan{X}{r}$ corresponding to $\sstable{X}{r}$ is isometric to $(X, d)$ and tangent.
\end{enumerate}
\end{proposition}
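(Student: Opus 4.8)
The plan is to normalize the base point to $p = 0$ in \eqref{e1.1.2}, which is admissible for both $X = \RR$ and $X = \RR_{+}$, and to exploit that $L := \Dist{b} > 0$ forces $\frac{|b_n|}{r_n} \to L$, so that $|b_n|$ and $r_n$ are comparable in the limit and every ``$/r_n$'' limit can be traded for a ``$/b_n$'' limit. All three statements will be read off from the single assignment $\ty \mapsto \lim_{n\to\infty}\frac{y_n}{b_n}$, so statement~\ref{p1.3.2:s1} is the heart of the matter.

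For the forward implication in \ref{p1.3.2:s1}, fix $\ty \in \sstable{X}{r}$. Mutual stability of $\ty$ and $\tb$ together with $\ty, \tb \in Seq(X, \tr)$ guarantees that $\frac{y_n^2}{r_n^2}$, $\frac{b_n^2}{r_n^2}$ and $\frac{(y_n - b_n)^2}{r_n^2}$ all converge, to $(\Dist{y})^2$, $L^2$ and $(\dist{y}{b})^2$ respectively. The polarization identity
\begin{equation*}
\frac{y_n b_n}{r_n^2} = \frac{1}{2}\left(\frac{y_n^2}{r_n^2} + \frac{b_n^2}{r_n^2} - \frac{(y_n - b_n)^2}{r_n^2}\right)
\end{equation*}
then shows $\frac{y_n b_n}{r_n^2}$ converges, and dividing by $\frac{b_n^2}{r_n^2} \to L^2 > 0$ makes $\frac{y_n}{b_n} = \frac{y_n b_n/r_n^2}{b_n^2/r_n^2}$ convergent with finite limit. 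I regard this as the one genuinely nonroutine step: since $b_n$ may change sign, a direct estimate only controls $\bigl|\frac{y_n}{b_n}\bigr|$, and the identity is what pins down the sign. For the converse, given $\ty \in \tilde{X}_{\infty}$ with $\lim \frac{y_n}{b_n}$ finite, I would verify that $\ty$ is mutually stable with each $\tx \in \sstable{X}{r}$ by passing to the limit in $\frac{|x_n - y_n|}{r_n} = \bigl|\frac{x_n - y_n}{b_n}\bigr|\frac{|b_n|}{r_n}$ (using the forward part for $\tx$), and then conclude $\ty \in \sstable{X}{r}$ from Lemma~\ref{l1.2.4}.

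The same factorization $\frac{|x_n - y_n|}{r_n} = \bigl|\frac{x_n - y_n}{b_n}\bigr|\frac{|b_n|}{r_n}$ yields $\dist{x}{y} = L\,\bigl|\lim \frac{x_n}{b_n} - \lim \frac{y_n}{b_n}\bigr|$, and since $L > 0$ this proves \ref{p1.3.2:s2}. For the isometry in \ref{p1.3.2:s3}, I would set $\Psi(\ty) := L\lim_{n\to\infty}\frac{y_n}{b_n}$, a map $\sstable{X}{r} \to X$ that is distance preserving by \ref{p1.3.2:s2}. It is onto: for $c \in X$ the sequence $y_n := \frac{c}{L} b_n$ satisfies $\Psi(\ty) = c$ --- it lies in $\tilde{X}_{\infty}$ when $c \neq 0$ (admissible by the converse of \ref{p1.3.2:s1}) and in $\sstable{X}{r}^{0} \subseteq \sstable{X}{r}$ when $c = 0$ --- and for $X = \RR_{+}$ the sign constraints keep both the image and these sequences inside $\RR_{+}$. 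Thus $\Psi$ is a distance preserving surjection onto the metric space $(X, d)$, hence a pseudoisometry, and Theorem~\ref{t2.10} gives that the metric identification $\pretan{X}{r}$ is isometric to $(X, d)$.

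For tangency I would appeal to Proposition~\ref{p1.3.1}, which reduces the claim to showing that $\tilde{X}'_{\infty, \tr}$ is maximal self-stable in $Seq(X, \tr')$ for every subsequence $\tr' = (r_{n_k})_{k}$. Here $\tb' = (b_{n_k})_{k}$ still satisfies $\Dist[r']{b'} = L > 0$ by \eqref{e3.3}, so \ref{p1.3.2:s1} applies over $\tr'$. Choosing a maximal self-stable $\tilde{X}_{\infty, \tr'} \supseteq \tilde{X}'_{\infty, \tr}$ and any $\tilde{w} = (w_k)_{k} \in \tilde{X}_{\infty, \tr'}$, the forward part of \ref{p1.3.2:s1} over $\tr'$ produces $c' := \lim_{k}\frac{w_k}{b_{n_k}}$; I then fill in $\tilde{t}$ by $t_{n_k} := w_k$ and $t_n := c' b_n$ for $n \notin \{n_k\}$, so that $\lim_{n}\frac{t_n}{b_n} = c'$. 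Mutual stability of $\tilde{t}$ with every element of $\sstable{X}{r}$ (again via the forward part of \ref{p1.3.2:s1}) and Lemma~\ref{l1.2.4} give $\tilde{t} \in \sstable{X}{r}$ with $\tilde{t}' = \tilde{w}$, so $\tilde{w} \in \tilde{X}'_{\infty, \tr}$; hence $\tilde{X}'_{\infty, \tr} = \tilde{X}_{\infty, \tr'}$ is maximal and $\pretan{X}{r}$ is tangent. The two places I expect to require the most care are the convergence of $\frac{y_n}{b_n}$ in \ref{p1.3.2:s1} and the sign bookkeeping in the $\RR_{+}$ case of the filling construction.
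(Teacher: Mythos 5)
Your proof is correct, and while its overall architecture matches the paper's (prove \ref{p1.3.2:s1}, deduce \ref{p1.3.2:s2} from the factorization \(\frac{|x_n-y_n|}{r_n}=\bigl|\frac{x_n}{b_n}-\frac{y_n}{b_n}\bigr|\frac{|b_n|}{r_n}\), build the isometry from the limit map, and reduce tangency to Proposition~\ref{p1.3.1} via a filling construction plus Lemma~\ref{l1.2.4}), the decisive step is handled by a genuinely different argument. The paper proves convergence of \(\frac{y_n}{b_n}\) by a sign analysis: it observes that this limit exists iff \(\lim_{n\to\infty}\frac{\sign{y_n}}{\sign{b_n}}\) exists and, assuming it does not, extracts two subsequences along which \(\dist{y}{b}\) would equal both \(\bigl|\Dist{y}-\Dist{b}\bigr|\) and \(\Dist{y}+\Dist{b}\), forcing \(\min\{\Dist{y},\Dist{b}\}=0\), a contradiction with \(\Dist{b}>0\). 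Your polarization identity replaces this case analysis entirely: convergence of the three squared ratios gives convergence of \(\frac{y_nb_n}{r_n^2}\), and division by \(\frac{b_n^2}{r_n^2}\to(\Dist{b})^2>0\) settles modulus and sign at once; the price is that you use the quadratic structure of \(\RR\) where the paper uses only order and signs. Your isometry step also differs in packaging: the paper introduces \(\tr^*=(r_n\sign{b_n})_{n\in\NN}\), defines \(f(\tx)=\lim_{n\to\infty}\frac{x_n}{r_n^*}\) (the same map as your \(\Psi\)), and verifies the isometry through an explicit commutative diagram, while you obtain it abstractly from the pseudoisometry \(\Psi\) and Theorem~\ref{t2.10}, shifting the work to the general theory of Section~\ref{sec2}. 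Two further points favor your write-up: you treat \(X=\RR\) and \(X=\RR_{+}\) uniformly, whereas the paper proves only the case \(X=\RR\) and cites \cite{BD2019a} for \(\RR_{+}\); and in the tangency argument your filling \(t_n:=c'b_n\) off the subsequence is the correct choice --- the paper's filling as printed puts \(y_n=r_n\sign{b_n}\) off the subsequence, which makes the relevant ratio identically \(1\) there and evidently should read \(s\,r_n\sign{b_n}\), with \(s\) the target limit.
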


\begin{proof}
Statements \ref{p1.3.2:s1}--\ref{p1.3.2:s3} were proved in Proposition~2.1 \cite{BD2019a} for the case when \(X = \RR_{+}\). Let us consider the case \(X = \RR\). 

Let \(\tr\) be a scaling sequence, \(\sstable{X}{r}\) be a maximal self-stable subset of \(Seq(X, \tr)\) and let \(\tb \in \sstable{X}{r}\) satisfy~\eqref{p1.3.2:e1}.

\(\ref{p1.3.2:s1}\). If $\ty = \seq{y_n}{n} \in \sstable{X}{r}$, then there are the finite limits
$$
\Dist{y} = \lim_{n\to\infty} \frac{|y_n|}{r_n} \text{ and } d_{\tr}(\tb, \ty) = \lim_{n\to\infty} \frac{|y_n - b_n|}{r_n}.
$$
For the case $\Dist{y} = 0$ we obtain 
$$
0 = \frac{\Dist{y}}{\Dist{b}} = \lim_{n\to\infty} \frac{|y_n|}{|b_n|} = \lim_{n\to\infty} \frac{y_n}{b_n}
$$
because $\Dist{b} \neq 0$. Suppose $\Dist[r]{y} \neq 0$, then
\begin{equation}\label{p1.3.2e1}
0 < \frac{\Dist{y}}{\Dist{b}} = \lim_{n\to\infty} \frac{|y_n|}{|b_n|} < \infty.
\end{equation}
Write $t = |t|\sign{t}$ for every $t \in \RR$, where
$$
\sign{t} = \begin{cases}
1 & \text{if } t > 0\\
0 & \text{if } t = 0\\
-1 & \text{if } t < 0.
\end{cases}
$$
It follows from~\eqref{p1.3.2e1}, that the limit $\lim_{n\to\infty} \frac{y_n}{b_n}$ exists if and only if there is $\lim_{n\to\infty} \frac{\sign{y_n}}{\sign{b_n}}$. If the last limit does not exist, then there are two infinite sequences $\tilde{n} = \seq{n_k}{k}$ and $\tilde{m} = \seq{m_k}{k}$ of natural numbers such that 
$$
\sign{y_{n_k}} = \sign{b_{n_k}} \quad \text{and} \quad \sign{y_{m_k}} = - \sign{b_{m_k}}
$$
for all $k \in \NN$. Consequently, we obtain
$$
\dist{y}{b} = \lim_{k\to\infty} \frac{|y_{n_k} - b_{n_k}|}{r_{n_k}} = \lim_{k\to\infty} \frac{\bigl|\left|y_{n_k}\right| - \left|b_{n_k}\right|\bigr|}{r_{n_k}} = \bigl|\Dist{y} - \Dist{b}\bigr|
$$
and, similarly, 
$$
\dist{y}{b} = \lim_{k\to\infty} \frac{|y_{m_k} - b_{m_k}|}{r_{m_k}} = \Dist{y} + \Dist{b}.
$$
Thus we have the equality
$$
\Dist{y} + \Dist{b} = \bigl|\Dist{y} - \Dist{b}\bigr|
$$
which implies
$$
\min\{\Dist{y}, \Dist{b}\} = 0,
$$
contrary to \eqref{p1.3.2e1}. It is shown that for every $\ty \in \sstable{X}{r}$ there is a finite limit $\lim_{n\to\infty} \frac{y_n}{b_n}$. 

Conversely, let $\ty \in \tilde{X}_{\infty}$ and 
\begin{equation}\label{p1.3.2e2}
\lim_{n\to\infty} \frac{y_n}{b_n} = c
\end{equation}
for some $c \in \RR$. The last equality and the inequality $\Dist{b} > 0$ imply that there is a finite limit
$$
\Dist{y} = \lim_{n\to\infty} \frac{|y_n|}{r_n}.
$$
Thus we must show that for every $\tx \in \sstable{X}{r}$ there is $\lim_{n\to\infty} \frac{|y_n - x_n|}{r_n}$, i.e., $\tx$ and $\ty$ are mutually stable w.r.t. $\tr$. It is easy to do. Since $\tx \in \sstable{X}{r}$, we have a finite limit 
\begin{equation}\label{p1.3.2e3}
\lim_{n\to\infty} \frac{x_n}{b_n} := k.
\end{equation}
Hence
\begin{equation}\label{p1.3.2e4}
\lim_{n\to\infty} \frac{\left|y_n - x_n\right|}{|r_n|} = \lim_{n\to\infty} \frac{|b_n|}{r_n} \left|\frac{x_n}{b_n} - \frac{y_n}{b_n}\right| = \Dist{b} |c-k|,
\end{equation}
where the constants $c$, $k$ are defined by~\eqref{p1.3.2e2} and~\eqref{p1.3.2e3}, respectively.

\(\ref{p1.3.2:s2}\). Statement $(ii)$ follows from~\eqref{p1.3.2e4}.

\(\ref{p1.3.2:s3}\). Statement $(i)$ implies that the sequence $\tr^* = \seq{r_n^*}{n}$ with
$$
r_n^* = r_n \sign{b_n}, \quad n \in \NN,
$$
belongs to $\sstable{X}{r}$. Considering $\tr^*$ instead of $\tb$ in \eqref{p1.3.2e2} and \eqref{p1.3.2e3}, we obtain the mapping $f \colon \sstable{X}{r} \to \RR$ with
$$
f(\tx) = \lim_{n\to\infty} \frac{x_n}{r_n^*}, \quad \tx = \seq{x_n}{n} \in \sstable{X}{r}.
$$
It is easy to see that there is a unique mapping $\psi \colon \pretan{X}{r} \to \RR$ such that the diagram
\begin{equation}\label{p1.3.2e5}
\ctdiagram{
\ctv 0,30: {\sstable{X}{r}} 
\ctv 120,30: {\pretan{X}{r}}
\ctv 120,-30: {\RR}
\ctet 0,30,120,30:{\pi}
\cter 120,30,120,-30:{\psi}
\cteb 0,30,120,-30:{f}
}
\end{equation}
is commutative, where $\pi$ is the natural projection (see \eqref{e1.1.8}). Since, $\Dist{r^*} = 1$, \eqref{p1.3.2e4} implies that $\psi$ is an isometry. It remains to prove that $\pretan{X}{r}$ is tangent. 

Let $\tilde{n} = \seq{n_k}{k}$ be a strictly increasing sequence of positive integer numbers and let $\tr' = \seq{r_{n_k}}{k}$ be the corresponding subsequence of the scaling sequence $\tr$. If $\sstable{X}{r'}$ is a maximal self-stable set such that
$$
\sstable{X}{r'} \supseteq \{\tx' \colon \tx \in \sstable{X}{r}\},
$$
then, by statement $(i)$, for every $\tx = \seq{x_k}{k} \in \sstable{X}{r'}$ there is a finite limit 
$$
\lim_{k \to \infty} \frac{x_k}{r_{n_k} \sign{b_{n_k}}}:= s.
$$
Define $\ty = \seq{y_n}{n} \in \tilde{X}_{\infty}$ by the rule:
$$
y_n:= \begin{cases}
x_{k} & \text{if there is $n_k$ such that $n_k=n$}\\
r_n \sign{b_n} & \text{otherwise}. 
\end{cases}
$$
It is clear that $\seq{y_{n_k}}{k} = \seq{x_{k}}{k}$. A simple calculation shows that 
$$
\lim_{n\to\infty} \frac{y_n}{r_n \sign{b_n}} = s.
$$
By statement $(i)$, $\ty$ belongs to $\sstable{X}{r}$. Using Proposition~\ref{p1.3.1} we see that $\pretan{X}{r}$ is tangent.
\end{proof}

Statement \ref{p1.3.2:s1} of Proposition~\ref{p1.3.2} implies the following.

\begin{corollary}\label{c4.3}
Let \((X, d)\) be the metric space with \(X = \RR_{+}\) endowed with the usual metric \(d(x, y) = |x - y|\). Then, for every scaling sequence \(\tr\), the set \(Seq(X, \tr)\) is self-stable and, consequently, \((X, d)\) has the unique pretangent space at infinity with respect to every \(\tr\).
\end{corollary}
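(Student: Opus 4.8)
The plan is to fix a scaling sequence \(\tr\) and to exhibit a maximal self-stable subset \(\sstable{X}{r} \subseteq Seq(X, \tr)\) which is forced to coincide with \(Seq(X, \tr)\) itself. Once the equality \(Seq(X, \tr) = \sstable{X}{r}\) is established, the set \(Seq(X, \tr)\) is self-stable by construction; being the whole of \(Seq(X, \tr)\), it is the only maximal self-stable subset, and hence the metric identification of \((Seq(X, \tr), d_{\tr})\) is the unique asymptotically pretangent space to \((X, d)\) with respect to \(\tr\).

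The decisive feature of \(X = \RR_{+}\) is that, choosing the base point \(p = 0\) (which is legitimate by Remark~\ref{r1.1.3}), we have \(d(x_n, p) = |x_n| = x_n\) for every \(\tx = \seq{x_n}{n} \in Seq(X, \tr)\), since all \(x_n \geqslant 0\). Thus \(\Dist{x} = \lim_{n\to\infty} x_n / r_n\) is a genuine limit rather than a limit of absolute values, and I would sort the elements of \(Seq(X, \tr)\) by the value of \(\Dist{x}\). If \(\Dist{x} = 0\), then \(\tx \in \sstable{X}{r}^{0}\). If \(\Dist{x} > 0\), then \(x_n / r_n \to \Dist{x} > 0\) together with \(r_n \to \infty\) forces \(x_n \to \infty\), so \(\tx \in \tilde{X}_{\infty}\).

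Now I would carry out the dichotomy. If every \(\tx \in Seq(X, \tr)\) satisfies \(\Dist{x} = 0\), then \(Seq(X, \tr) = \sstable{X}{r}^{0}\), which is self-stable by statement~\ref{p1.2.2s4} of Proposition~\ref{p1.2.2}, and we are finished. Otherwise there is \(\tb \in Seq(X, \tr)\) with \(\Dist{b} > 0\); I fix a maximal self-stable \(\sstable{X}{r}\) containing \(\tb\), which exists because \(\{\tb\} \cup \sstable{X}{r}^{0}\) is self-stable by statement~\ref{p1.2.2s3} and extends to a maximal one. For \(\tx \in Seq(X, \tr)\) with \(\Dist{x} = 0\) we have \(\tx \in \sstable{X}{r}^{0} \subseteq \sstable{X}{r}\) by statement~\ref{p1.2.2s5}. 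For \(\tx\) with \(\Dist{x} > 0\), the ratio limit \(\lim_{n\to\infty} x_n / b_n = \Dist{x} / \Dist{b}\) exists and is finite, being the quotient of the limits \(x_n / r_n\) and \(b_n / r_n\) with nonzero denominator, and \(\tx \in \tilde{X}_{\infty}\); hence the converse part of statement~\ref{p1.3.2:s1} of Proposition~\ref{p1.3.2} yields \(\tx \in \sstable{X}{r}\). In either case \(Seq(X, \tr) \subseteq \sstable{X}{r}\), so \(Seq(X, \tr) = \sstable{X}{r}\) is self-stable.

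I expect the only delicate point to be the bookkeeping of the case distinction and securing the witness \(\tb\) with \(\Dist{b} > 0\) precisely when the converse of statement~\ref{p1.3.2:s1} is invoked; the analytic substance is entirely contained in the observation that nonnegativity removes the absolute value, so that \(x_n / r_n\) and therefore \(x_n / b_n\) converge genuinely. This is exactly what makes \(\RR_{+}\) far simpler than \(\RR\), where sequences of differing signs can fail to be mutually stable and force one to restrict to a maximal self-stable set already in Proposition~\ref{p1.3.2}. Uniqueness of the asymptotically pretangent space is then immediate, since \(Seq(X, \tr)\) turns out to be the unique maximal self-stable subset.
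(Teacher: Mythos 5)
Your proof is correct and follows essentially the same route as the paper, whose entire proof is the one-line citation ``Statement \ref{p1.3.2:s1} of Proposition~\ref{p1.3.2} implies the following'': you have simply spelled out that citation, extending \(\{\tb\} \cup \sstable{X}{r}^{0}\) to a maximal self-stable set and using the converse half of statement \ref{p1.3.2:s1} (together with statements \ref{p1.2.2s3}--\ref{p1.2.2s5} of Proposition~\ref{p1.2.2}) to absorb all of \(Seq(X, \tr)\) into it. The only cosmetic remark is that your first case (\(\Dist{x} = 0\) for all \(\tx\)) is vacuous for \(X = \RR_{+}\), since \(\tr\) itself lies in \(Seq(X, \tr)\) with \(\Dist{r} = 1\), so the witness \(\tb\) always exists.
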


The following lemma is a reformulation of Theorem~3.1 from \cite{BD2019}.

\begin{lemma}\label{l4.3}
Let \((X, d)\) be an unbounded metric space. Then all pretangent spaces \(\pretan{X}{r}\) are complete.
\end{lemma}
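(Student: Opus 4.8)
The plan is to establish completeness of a fixed asymptotically pretangent space $(\pretan{X}{r}, \rho)$ by a diagonal construction, the standard device for tangent-type spaces. Let $\sstable{X}{r}$ be the maximal self-stable set whose metric identification is $(\pretan{X}{r}, \rho)$, and let $\pi \colon \sstable{X}{r} \to \pretan{X}{r}$ be the natural projection. Let $(\alpha^{(m)})_{m \in \NN}$ be an arbitrary Cauchy sequence in $(\pretan{X}{r}, \rho)$. Since a Cauchy sequence converges as soon as one of its subsequences does, I would first pass to a subsequence and pick representatives $\tx^{(m)} = (x_n^{(m)})_{n \in \NN} \in \sstable{X}{r}$ with $\pi(\tx^{(m)}) = \alpha^{(m)}$ and $d_{\tr}(\tx^{(m)}, \tx^{(m+1)}) = \rho(\alpha^{(m)}, \alpha^{(m+1)}) < 2^{-m}$ for every $m$.

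Next, using that $d_{\tr}(\tx^{(m)}, \tx^{(m+1)}) = \lim_{n \to \infty} d(x_n^{(m)}, x_n^{(m+1)})/r_n$, I would select a strictly increasing sequence of indices $N_1 < N_2 < \cdots$ so that $d(x_n^{(m)}, x_n^{(m+1)})/r_n < 2^{-m}$ whenever $n \geq N_m$, and then assemble the diagonal sequence $\tx = \seq{x_n}{n}$ by setting $x_n := x_n^{(m)}$ for $N_m \leq n < N_{m+1}$ (and $x_n := x_n^{(1)}$ for $n < N_1$). The key quantitative estimate is obtained by telescoping along the chain: since the $N_j$ are increasing, for all $m' \geq m$ and all $n \geq N_{m'}$ one has $d(x_n^{(m')}, x_n^{(m)})/r_n \leq \sum_{j=m}^{m'-1} d(x_n^{(j)}, x_n^{(j+1)})/r_n < \sum_{j \geq m} 2^{-j} = 2^{1-m}$; in particular $d(x_n, x_n^{(m)})/r_n < 2^{1-m}$ as soon as $n \geq N_m$.

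The main work, and the step I expect to be the real obstacle, is to verify that $\tx$ genuinely lies in $\sstable{X}{r}$ and not merely in $Seq(X, \tr)$. Here I would appeal to Lemma~\ref{l1.2.4}: it suffices to show that $\tx$ is mutually stable with every $\ty \in \sstable{X}{r}$. Fix such a $\ty$; since $(\tx^{(m)})$ is Cauchy, the triangle inequality $|d_{\tr}(\tx^{(m)}, \ty) - d_{\tr}(\tx^{(l)}, \ty)| \leq d_{\tr}(\tx^{(m)}, \tx^{(l)})$ shows that $(d_{\tr}(\tx^{(m)}, \ty))_{m \in \NN}$ is a Cauchy sequence of reals, with some limit $L_{\ty}$. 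Combining $\lim_{n \to \infty} d(x_n^{(m)}, y_n)/r_n = d_{\tr}(\tx^{(m)}, \ty)$ with the diagonal estimate sandwiches both $\liminf_n$ and $\limsup_n$ of $d(x_n, y_n)/r_n$ within $2^{1-m}$ of $d_{\tr}(\tx^{(m)}, \ty)$; letting $m \to \infty$ forces the limit $\lim_{n \to \infty} d(x_n, y_n)/r_n$ to exist and equal $L_{\ty}$. Thus $\tx$ is mutually stable with every element of $\sstable{X}{r}$, Lemma~\ref{l1.2.4} yields $\tx \in \sstable{X}{r}$, and along the way we record the identity $d_{\tr}(\tx, \ty) = L_{\ty} = \lim_{m \to \infty} d_{\tr}(\tx^{(m)}, \ty)$.

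Finally, I would read off convergence from this identity. Taking $\ty = \tx^{(m_0)}$ gives $d_{\tr}(\tx, \tx^{(m_0)}) = \lim_{m \to \infty} d_{\tr}(\tx^{(m)}, \tx^{(m_0)})$, and the right-hand side is arbitrarily small for $m_0$ large precisely because $(\tx^{(m)})$ is Cauchy. Hence $\rho(\alpha^{(m_0)}, \pi(\tx)) = d_{\tr}(\tx^{(m_0)}, \tx) \to 0$ as $m_0 \to \infty$, so the chosen subsequence converges to the point $\pi(\tx) \in \pretan{X}{r}$, and consequently the original Cauchy sequence converges to the same point. As $(\alpha^{(m)})$ and the space $\pretan{X}{r}$ were arbitrary, every asymptotically pretangent space is complete.
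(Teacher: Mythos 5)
Your proof is correct, but there is nothing inside the paper to compare it against: the paper does not prove Lemma~\ref{l4.3} at all, presenting it instead as a reformulation of Theorem~3.1 of \cite{BD2019} and deferring the argument to that reference. Your diagonal construction goes through. After passing to a subsequence with \(\rho(\alpha^{(m)},\alpha^{(m+1)})<2^{-m}\) and choosing representatives \(\tx^{(m)}\in\sstable{X}{r}\), the strictly increasing thresholds \(N_m\) make the telescoping estimate legitimate: for \(n\ge N_{m'}\ge N_j\) each term satisfies \(d\bigl(x_n^{(j)},x_n^{(j+1)}\bigr)/r_n<2^{-j}\), whence \(d\bigl(x_n,x_n^{(m)}\bigr)/r_n<2^{1-m}\) for all \(n\ge N_m\). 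This sandwiches both \(\limsup_n d(x_n,y_n)/r_n\) and \(\liminf_n d(x_n,y_n)/r_n\) within \(2^{1-m}\) of \(d_{\tr}\bigl(\tx^{(m)},\ty\bigr)\) for every \(\ty\in\sstable{X}{r}\), which forces mutual stability of the diagonal \(\tx\) with every element of \(\sstable{X}{r}\); invoking Lemma~\ref{l1.2.4} is exactly the right move here, since it upgrades this to \(\tx\in\sstable{X}{r}\) and thereby avoids the genuine pitfall you identified, namely that a priori one only knows \(\tx\in Seq(X,\tr)\). The identity \(d_{\tr}\bigl(\tx,\tx^{(m_0)}\bigr)=\lim_{m}d_{\tr}\bigl(\tx^{(m)},\tx^{(m_0)}\bigr)\le 2^{1-m_0}\) then yields convergence of the subsequence, hence of the original Cauchy sequence. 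What your route buys is self-containedness: completeness of \(\pretan{X}{r}\) is derived entirely from the paper's own toolkit (Lemma~\ref{l1.2.4}, the definition of the metric identification, and elementary estimates), whereas the paper's route buys brevity by outsourcing the proof to \cite{BD2019}.
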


The next lemma is a modification of Lemma~2.2 from~\cite{DD2009}.

\begin{lemma}\label{l4.4}
Let \(Y_1\) and \(Y_2\) be isometric subspaces of the real line \(\RR\) endowed with the usual metric. Then \(\RR \setminus Y_1\) and \(\RR \setminus Y_2\) also are isometric as subspaces of \(\RR\).
\end{lemma}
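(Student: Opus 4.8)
The plan is to reduce the lemma to the rigidity of the real line: any distance-preserving bijection between two subsets of $\RR$ is the restriction of a global isometry of $\RR$, and such a global isometry maps complements onto complements. I would first dispose of the degenerate cases. If $Y_1 = \varnothing$, then $Y_2 = \varnothing$ and $\RR \setminus Y_1 = \RR = \RR \setminus Y_2$; if $Y_1$ is a single point, then so is $Y_2$, and a translation of $\RR$ already maps $Y_1$ onto $Y_2$. Hence I may assume that $Y_1$, and therefore $Y_2$, has at least two points, and fix an isometry $f \colon Y_1 \to Y_2$, that is, a bijection with $|f(x) - f(y)| = |x - y|$ for all $x$, $y \in Y_1$.

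The core step is to show that $f$ extends to a global isometry $g \colon \RR \to \RR$ of the form $g(x) = \varepsilon x + c$ with $\varepsilon \in \{-1, 1\}$ and $c \in \RR$. To this end I would fix two distinct points $a$, $b \in Y_1$. Because $|f(a) - f(b)| = |a - b| > 0$, exactly one choice of sign $\varepsilon$ together with a suitable $c$ makes the map $g(x) = \varepsilon x + c$ satisfy $g(a) = f(a)$ and $g(b) = f(b)$, the sign being determined by whether $f$ preserves or reverses the order of $a$ and $b$. For an arbitrary $x \in Y_1$, the value $f(x)$ lies at distance $|x - a|$ from $g(a)$ and at distance $|x - b|$ from $g(b)$; the point $g(x)$ satisfies the same two constraints, since $g$ is an isometry that reproduces the distances from $a$ and $b$. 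In $\RR$ the set of points at prescribed distances from two \emph{distinct} centers has at most one element, so $f(x) = g(x)$. Thus $f = g|_{Y_1}$, and in particular $g(Y_1) = Y_2$.

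It then remains to conclude: $g$ is a bijection of $\RR$, so $g(\RR \setminus Y_1) = \RR \setminus g(Y_1) = \RR \setminus Y_2$, and $g$ is distance-preserving, whence the restriction of $g$ to $\RR \setminus Y_1$ is an isometry onto $\RR \setminus Y_2$, as required. I expect the main obstacle to be the extension step, and within it the uniqueness claim $f(x) = g(x)$; the delicate point is checking that two two-point ``spheres'' about the distinct centers $g(a)$ and $g(b)$ meet in at most one point, which ultimately rests on $g(a) \neq g(b)$. The remaining verifications are purely mechanical.
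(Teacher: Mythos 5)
Your proof is correct and follows essentially the same route as the paper's: fix two distinct points $a, b \in Y_1$, produce a global isometry of $\RR$ agreeing with $f$ at $a$ and $b$, and use the fact that two spheres in $\RR$ with distinct centers meet in at most one point to conclude that $f$ is the restriction of that global isometry, which then carries $\RR \setminus Y_1$ onto $\RR \setminus Y_2$. The only (immaterial) difference is that the paper composes $f$ with an isometry $\Psi_2$ sending $f(a), f(b)$ back to $a, b$ and shows the composite is the identity on $Y_1$, whereas you build the extension $g(x)=\varepsilon x + c$ directly.
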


\begin{proof}
The conclusion of the lemma is trivially valid if \(Y_1\) is an one-point set. Suppose that \(|Y_1| \geqslant 2\). 

Let \(f \colon Y_1 \to Y_2\) be an isometry of \(Y_1\) and \(Y_2\). It suffices to show that there is a self-isometry \(\Psi \colon \RR \to \RR\) such that \(f\) is the restriction \(\Psi|_A\) of \(\Psi\) on the set \(A\),
\begin{equation}\label{l4.4:e1}
f = \Psi|_A.
\end{equation}
Let \(a\) and \(b\) be different points of \(Y_1\). Using shifts and reflections of the real line \(\RR\), we can find a self-isometry \(\Psi_2 \colon \RR \to \RR\) such that
\begin{equation}\label{l4.4:e2}
a = \Psi_2(f(a)) \text{ and } b = \Psi_2(f(b)).
\end{equation}
Then the function \(Y_1 \xrightarrow{f} Y_2 \xrightarrow{\Psi_2|_{Y_2}} \RR\) is an isometric embedding of \(Y_1\) in \(\RR\), where \(\Psi_2|_{Y_2}\) is the restriction of \(\Psi_2\) on the set \(Y_2\). Let us denote the last embedding by \(\varphi\). If \(t\) is an arbitrary point of \(Y_1\), then, using \eqref{l4.4:e2}, we obtain
\begin{equation}\label{l4.4:e3}
|a - t| = |\varphi(a) - \varphi(t)| = |a - \varphi(t)|
\end{equation}
and
\begin{equation}\label{l4.4:e4}
|b - t| = |\varphi(b) - \varphi(t)| = |b - \varphi(t)|.
\end{equation}
Since the intersection of any two different ``spheres'' in \(\RR\) is either empty or contains exactly one point, \eqref{l4.4:e3} and \eqref{l4.4:e4} imply \(\varphi(t) = t\). Thus, \(\varphi\) is the restriction of the identical mapping \(\operatorname{Id} \colon \RR \to \RR\) on the set \(Y_1\). It implies equality \eqref{l4.4:e1} with \(\Psi = \Psi_2^{-1}\).
\end{proof}

The following theorem is one of the main results of the paper.

\begin{theorem}\label{t4.3}
Let \((X, d)\) be isometric to the real line \(\RR\) with the usual metric and let \(Y\) be unbounded subspace of \((X, d)\). Then the following conditions are equivalent:
\begin{enumerate}
\item \label{t4.3:c1} \(Y\) is isometric to \(\RR\) or \(\RR_{+}\);
\item \label{t4.3:c2} For every scaling sequence \(\tr = (r_n)_{n \in \NN}\), every pretangent space \(\pretan{Y}{r}\) is isometric to \(Y\).
\end{enumerate}
\end{theorem}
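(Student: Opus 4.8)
The plan is to prove the two implications separately, with essentially all of the work going into \(\ref{t4.3:c2}\Rightarrow\ref{t4.3:c1}\).

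\emph{Direction \(\ref{t4.3:c1}\Rightarrow\ref{t4.3:c2}\).} Since the construction of \(\sstable{Y}{r}\), of the pseudometric \(d_{\tr}\) and of \(\pretan{Y}{r}\) is purely metric, any isometry carries maximal self-stable sets to maximal self-stable sets and pretangent spaces to isometric pretangent spaces; thus the class of pretangent spaces is an isometry invariant. So if \(Y\) is isometric to \(\RR\) or to \(\RR_{+}\), then statement \(\ref{p1.3.2:s3}\) of Proposition~\ref{p1.3.2} (every pretangent space of \(\RR\), resp. of \(\RR_{+}\), is isometric to \(\RR\), resp. to \(\RR_{+}\)) immediately yields that every \(\pretan{Y}{r}\) is isometric to \(Y\).

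\emph{Direction \(\ref{t4.3:c2}\Rightarrow\ref{t4.3:c1}\).} Assume \(\ref{t4.3:c2}\) and identify \((X,d)\) with \(\RR\), so that \(Y\) is an unbounded subset of \(\RR\). By Lemma~\ref{l4.3} every \(\pretan{Y}{r}\) is complete, and by hypothesis it is isometric to the unbounded space \(Y\), hence is itself complete and unbounded. I next realize pretangent spaces inside \(\RR\). The arguments establishing statements \(\ref{p1.3.2:s1}\)--\(\ref{p1.3.2:s2}\) of Proposition~\ref{p1.3.2} use only that the points lie in \(\RR\), so they apply to an arbitrary \(Y\subseteq\RR\): fixing any \(\tb\in\sstable{Y}{r}\) with \(\Dist{b}>0\) (such \(\tb\) exists since \(\pretan{Y}{r}\) is unbounded, a one-point pretangent space being bounded), the map \(\ty\mapsto\lim_{n\to\infty}\frac{\sign{b_n}\,y_n}{r_n}\) is distance preserving and descends to an isometric embedding of \(\pretan{Y}{r}\) onto a set \(S\subseteq\RR\) sending the distinguished point \(\nu_{0}\) to \(0\). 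As \(\pretan{Y}{r}\) is complete, \(S\) is closed in \(\RR\); thus each pretangent space is isometric to a closed unbounded \(S\subseteq\RR\).

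The crux is a \emph{self-similarity} of pretangent spaces. For \(\lambda>0\) the sequence \(\tr_{\lambda}=(r_n/\lambda)_{n\in\NN}\) is again a scaling sequence, \(Seq(Y,\tr_{\lambda})=Seq(Y,\tr)\), mutual stability with respect to \(\tr_{\lambda}\) coincides with mutual stability with respect to \(\tr\), and \(d_{\tr_{\lambda}}=\lambda\,d_{\tr}\). Hence the \emph{same} \(\sstable{Y}{r}\) is maximal self-stable for \(\tr_{\lambda}\), and its metric identification is realized in \(\RR\) exactly as \(\lambda S\). Applying \(\ref{t4.3:c2}\) to \(\tr_{\lambda}\) gives \(\lambda S\cong Y\cong S\) for every \(\lambda>0\). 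Now Lemma~\ref{l4.4} shows that \(\RR\setminus\lambda S\) and \(\RR\setminus S\) are isometric subspaces of \(\RR\); since an isometry between subsets of \(\RR\) preserves the lengths of the bounded components of the complement, the multiset of lengths of the bounded gaps of \(S\) coincides with that of \(\lambda S\), i.e. it is invariant under multiplication by every \(\lambda>0\). A closed subset of \(\RR\) has only countably many bounded complementary intervals, so this length set is countable; being invariant under all dilations it would otherwise be all of \((0,\infty)\), and therefore it must be empty. Consequently \(S\) has no bounded complementary intervals, so the closed unbounded set \(S\) is either \(\RR\) or a closed half-line, and \(Y\cong S\) is isometric to \(\RR\) or to \(\RR_{+}\), which is \(\ref{t4.3:c1}\).

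\textbf{Main obstacle.} The entire argument hinges on the self-similarity step married to Lemma~\ref{l4.4}, and the delicate points are exactly there: one must verify that the single maximal self-stable set \(\sstable{Y}{r}\) serves both \(\tr\) and \(\tr_{\lambda}\), so that the two pretangent spaces are genuinely \(S\) and \(\lambda S\) inside one fixed copy of \(\RR\), and that \(\ref{t4.3:c2}\) is legitimately invoked for the rescaled sequence. Equally essential is that \(S\) is \emph{closed}—guaranteed by the completeness Lemma~\ref{l4.3}—since only for closed sets does Lemma~\ref{l4.4} translate the abstract isometry \(\lambda S\cong S\) into equality of the bounded-gap-length multisets; without closedness the complementary interval structure, and hence the whole dilation-invariance contradiction, could fail.
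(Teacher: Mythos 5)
Your proof is correct and follows essentially the same route as the paper's: Statement~\ref{p1.3.2:s3} of Proposition~\ref{p1.3.2} for \(\ref{t4.3:c1}\Rightarrow\ref{t4.3:c2}\), and for the converse the same rescaling trick (the paper multiplies the scaling sequence by \(k\), you divide by \(\lambda\)), completeness via Lemma~\ref{l4.3} to get a closed realization in \(\RR\), Lemma~\ref{l4.4} to pass to complements, and the countably-many-gaps versus uncountably-many-scales argument. The only cosmetic differences are your explicit coordinate realization of \(\pretan{Y}{r}\) as a set \(S \subseteq \RR\) via the limit map of Proposition~\ref{p1.3.2}, and your packaging of the final contradiction as ``the gap-length set is invariant under all dilations and countable, hence empty,'' where the paper instead fixes a single \(k_1\) whose rescaled gap length avoids the countably many existing gap lengths.
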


\begin{proof}
Statement~\ref{p1.3.2:s3} of Proposition~\ref{p1.3.2} implies the validity of the implication \(\ref{t4.3:c1} \Rightarrow \ref{t4.3:c2}\).

Let us prove the validity of \(\ref{t4.3:c2} \Rightarrow \ref{t4.3:c1}\). Suppose that condition \ref{t4.3:c2} holds. Let \(\tr = (r_n)_{n \in \NN}\) be a fixed scaling sequence, \(Y_{\infty, \tr}\) be a maximal self-stable subset of \(Sec(Y, \tr)\) and let \(\pretan{Y}{r}\) be the metric identification of \(Y_{\infty, \tr}\). For arbitrary \(k > 0\), we denote by \(k \tr\) the sequence \((kr_n)_{n \in \NN}\),
\begin{equation}\label{t4.3:e1}
k \tr := (kr_n)_{n \in \NN}.
\end{equation}
Considering \(k \tr\) as a new scaling sequence and using \eqref{t4.3:e1} it is easy to see that:
\begin{itemize}
\item The equality \(Seq(Y, \tr) = Seq(Y, k\tr)\) holds;
\item The equivalence
\begin{multline*}
(\text{\(\tx\) and \(\ty\) are mutually stable w.r.t \(\tr\)}) \\
\Leftrightarrow (\text{\(\tx\) and \(\ty\) are mutually stable w.r.t \(k\tr\)})
\end{multline*}
is valid for all \(\tx\), \(\ty \in Seq(Y, \tr)\).
\end{itemize}
Hence, \(Y_{\infty, \tr}\) is also a maximal subset of \(Seq(Y, k\tr)\). Therefore, we may re-designate \(Y_{\infty, \tr}\) as \(Y_{\infty, k\tr}\). It is also follows from \eqref{t4.3:e1} that
\begin{equation}\label{t4.3:e2}
(\tx \coherent{\tr} \ty) \Leftrightarrow (\tx \coherent{k\tr} \ty)
\end{equation}
is valid for all \(\tx\), \(\ty \in Seq(Y, \tr)\).

Now \(Y_{\infty, \tr} = Y_{\infty, k\tr}\) and \eqref{t4.3:e2} imply the equality \(\Omega_{\infty, \tr}^Y = \Omega_{\infty, k\tr}^Y\). It follows from \ref{t4.3:c2} that \((\pretan{Y}{r}, \rho)\) and \((\Omega_{\infty, k\tr}^Y, \rho_k)\) are isometric, where 
\[
\rho (\alpha, \beta) = d_{\tr} (\tx, \ty)
\]
for \(\tx \in \alpha \in \Omega_{\infty, \tr}^Y\), \(\ty \in \beta \in \Omega_{\infty, \tr}^Y\) and, analogously, 
\[
\rho_k (\gamma, \delta) = d_{k\tr} (\tld{u}, \tld{v})
\]
for \(\tld{u} \in \gamma \in \Omega_{\infty, k\tr}^Y\), \(\tld{v} \in \delta \in \Omega_{\infty, k\tr}^Y\) (see \eqref{e1.1.5}). Moreover, using \eqref{e1.1.1}, \eqref{e1.1.5} and \eqref{t4.3:e1}, we see that
\begin{equation}\label{t4.3:e3}
\frac{1}{k} \rho(\alpha, \beta) = \rho_k(\alpha, \beta)
\end{equation}
holds for all \(\alpha\), \(\beta \in \Omega_{\infty, \tr}^Y\).

By condition~\ref{t4.3:c2}, there exist isometric embeddings \(\Phi \colon \Omega_{\infty, \tr}^Y \to \RR\) and \(\Phi_k \colon \Omega_{\infty, k\tr}^Y \to \RR\) such that \(\Phi(\Omega_{\infty, \tr}^Y)\) and \(\Phi_k(\Omega_{\infty, k\tr}^Y)\) are isometric subspaces of \(\RR\). Write \(A := \{\Phi(\alpha) \colon \alpha \in \Omega_{\infty, \tr}^Y\}\). Then \eqref{t4.3:e3} implies
\begin{equation*}
\Phi_k (\Omega_{\infty, k\tr}^Y) = k^{-1} A = \{k^{-1} t \colon t \in A\}.
\end{equation*}
Consequently, the metric spaces \(\RR \setminus A\) and \(\RR \setminus k^{-1} A\) are also isometric for every \(k > 0\) by Lemma~\ref{l4.4}. 

Let us now prove that condition~\ref{t4.3:c1} is satisfied. Using the above introduced designations, we can rewrite this condition as: Either \(A\) is isometric to \(\RR\) or \(A\) is isometric to \(\RR_{+}\). 

First of all, we note that if \(\RR \setminus A\) is empty, then \(A = \RR\) holds. Let \(\RR \setminus A\) be nonempty.

Let us consider the case when \(\RR \setminus A\) is connected and unbounded. By Lemma~\ref{l4.3}, \(\RR \setminus A\) is open. Consequently, we have
\begin{equation}\label{t4.3:e5}
\RR \setminus A = (- \infty, \infty),
\end{equation}
or there is \(s \in \RR\) such that
\begin{equation}\label{t4.3:e6}
\RR \setminus A = (- \infty, s) \text{ or } \RR \setminus A = (s, \infty).
\end{equation}
Equality~\eqref{t4.3:e5} implies \(A = \varnothing\), contrary to the definition of pretangent metric spaces (see Remark~\ref{r3.6}). Hence, \eqref{t4.3:e6} holds. Passing in formula \eqref{t4.3:e6} from \(\RR \setminus A\) to \(A\), we obtain
\[
A = [s, \infty) \text{ or } A = (-\infty, s],
\]
that implies the isometricity of \(A\) and \(\RR_{+}\).

Thus, if \ref{t4.3:c1} does not hold, then either \(\RR \setminus A\) is bounded or \(\RR \setminus A\) is unbounded and not connected. We claim that in both cases at least one of the connected components of \(\RR \setminus A\) is bounded. 

The last claim is trivially valid if \(\RR \setminus A\) is bounded. Let \(\RR \setminus A\) be unbounded and not connected. Suppose that every connected component of \(\RR \setminus A\) is unbounded. Since every two different connected components are disjoint and each of them can be represented as \((-\infty, c)\) or \((c, \infty)\) for some \(c \in \RR\), the set \(\RR \setminus A\) contains exactly two components \((-\infty, c_1)\) and \((c_2, \infty)\) such that \(c_1 \leqslant c_2\). Hence, the equality \(A = [c_1, c_2]\) holds, that is impossible because \(A\), by condition~\ref{t4.3:c2}, is isometric to the unbounded subspace \(Y\) of the space \((X, d)\). The existence of bounded connected component of \(\RR \setminus A\) follows.

To complete the proof it suffices to show that \ref{t4.3:c2} is false whenever the set \(\RR \setminus A\) contains a bounded connected component. It is easy to show. Indeed, the set \(Com(\RR \setminus A)\) of all connected components is at most countable and the cardinality of \((0, \infty)\) is uncountably infinite. Consequently, if an open interval \((a, b)\), \(-\infty < a < b < \infty\), belongs to \(Com(\RR \setminus A)\), then there is \(k_1 \in (0, \infty)\) such that
\begin{equation}\label{t4.3:e7}
k_1^{-1} |b-a| \neq |c- d|
\end{equation}
for every bounded connected component \((c, d)\) of \(\RR \setminus A\). By condition~\ref{t4.3:c2}, \(A\) and \(k_1^{-1}A\) are isometric. It implies that \(\RR \setminus A\) and \(\RR \setminus k_1^{-1} A\) are isometric by Lemma~\ref{l4.4}. The open interval \(\left(k_1^{-1} a, k_1^{-1} b\right)\) is a connected component of \(\RR \setminus k_1^{-1} A\). Hence, the isometricity of \(\RR \setminus A\) and \(\RR \setminus k_1^{-1} A\) implies the existence of an open interval \((c, d)\) such that \((c, d)\) is a connected component of \(\RR \setminus A\) and the equality
\[
\left|k_1^{-1} a - k_1^{-1} b\right| = |c - d|
\]
holds. The last equality contradicts \eqref{t4.3:e7}. The proof is completed.
\end{proof}

\section{Asymptotically equivalent subspaces and asymptotically isometric spaces}
\label{sec7}

Let \((X,d)\) be a metric space, \(\tr = \seq{r_n}{n}\) be a scaling sequence and let \(Y\), \(Z\) be unbounded metric subspaces of \((X,d)\).

\begin{definition}\label{d7.1}
\(Y\) and \(Z\) are asymptotically equivalent with respect to \(\tr\) if there are mappings \(\bm{\Phi} \colon Seq(Y, \tr) \to Seq(Z, \tr)\) and \(\bm{\Psi} \colon Seq(Z, \tr) \to Seq(Y, \tr)\) such that 
\begin{equation}\label{d7.1:e1}
d^{\tr}(\ty, \bm{\Phi}(\ty)) = d^{\tr}(\tz, \bm{\Psi}(\tz)) = 0
\end{equation}
for every \(\ty \in Seq(Y, \tr)\) and every \(\tz \in Seq(Z, \tr)\), where \(d^{\tr}\) is a pseudometric on \(Seq(X, \tr)\) defined by~\eqref{e2.10}.
\end{definition}

\begin{remark}\label{r7.2}
It is easy to see that \(Y\) and \(Z\) are asymptotically equivalent with respect to \(\tr\) if and only if, for all \(\seq{y_n^1}{n} \in Seq(Y, \tr)\) and \(\seq{z_n^1}{n} \in Seq(Z, \tr)\), there exist \(\seq{y_n^2}{n} \in Seq(Y, \tr)\) and \(\seq{z_n^2}{n} \in Seq(Z, \tr)\) satisfying the limit relations
\[
\lim_{n \to \infty} \frac{d(y_n^1, z_n^2)}{r_n} = \lim_{n \to \infty} \frac{d(y_n^2, z_n^1)}{r_n} = 0.
\]
\end{remark}

The first our goal is to show that the mappings \(\bm{\Phi}\) and \(\bm{\Psi}\) satisfying \eqref{d7.1:e1} are pseudoisometries in the sense of Definition~\ref{d2.5}. The following lemma shows that these mappings are distance preserving. 

\begin{lemma}\label{l7.2}
Let \((X, d)\) be a metric space, \(\tr\) be a scaling sequence, \(Y\), \(Z\) be unbounded subspaces of \((X, d)\) and let \(\bm{\Phi} \colon Seq(Y, \tr) \to Seq(Z, \tr)\) satisfy
\begin{equation}\label{l7.2:e1}
d^{\tr}(\ty, \bm{\Phi}(\ty)) = 0
\end{equation}
for all \(\ty \in Seq(Y, \tr)\). Then the equality 
\begin{equation}\label{l7.2:e2}
d^{\tr}(\bm{\Phi}(\ty_1), \bm{\Phi}(\ty_2)) = d^{\tr}(\ty_1, \ty_2)
\end{equation}
holds for all \(\ty_1\), \(\ty_2 \in Seq(Y, \tr)\).
\end{lemma}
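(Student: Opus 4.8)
The plan is to exploit that $d^{\tr}$ is a genuine pseudometric on $Seq(X, \tr)$, as established just before Conjecture~\ref{con5.1}; in particular it is symmetric and obeys the triangle inequality. The whole statement is then an instance of the elementary fact that in any pseudometric space two points lying at distance zero are interchangeable in distance computations, applied to the pairs $\ty_1, \bm{\Phi}(\ty_1)$ and $\ty_2, \bm{\Phi}(\ty_2)$, each of which has vanishing $d^{\tr}$-distance by hypothesis~\eqref{l7.2:e1}.

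First I would note that since $\ty_1, \ty_2 \in Seq(Y, \tr)$, the images $\bm{\Phi}(\ty_1), \bm{\Phi}(\ty_2)$ are well-defined elements of $Seq(Z, \tr) \subseteq Seq(X, \tr)$, so all four sequences live in the pseudometric space $(Seq(X, \tr), d^{\tr})$ and every $d^{\tr}$-distance below makes sense. Applying the triangle inequality twice gives
\begin{equation*}
d^{\tr}(\bm{\Phi}(\ty_1), \bm{\Phi}(\ty_2)) \leqslant d^{\tr}(\bm{\Phi}(\ty_1), \ty_1) + d^{\tr}(\ty_1, \ty_2) + d^{\tr}(\ty_2, \bm{\Phi}(\ty_2)),
\end{equation*}
and, by~\eqref{l7.2:e1} together with the symmetry of $d^{\tr}$, the first and third terms on the right vanish, so that $d^{\tr}(\bm{\Phi}(\ty_1), \bm{\Phi}(\ty_2)) \leqslant d^{\tr}(\ty_1, \ty_2)$.

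Then I would prove the reverse inequality the same way: the triangle inequality yields
\begin{equation*}
d^{\tr}(\ty_1, \ty_2) \leqslant d^{\tr}(\ty_1, \bm{\Phi}(\ty_1)) + d^{\tr}(\bm{\Phi}(\ty_1), \bm{\Phi}(\ty_2)) + d^{\tr}(\bm{\Phi}(\ty_2), \ty_2),
\end{equation*}
and again the outer terms are zero by~\eqref{l7.2:e1}, giving $d^{\tr}(\ty_1, \ty_2) \leqslant d^{\tr}(\bm{\Phi}(\ty_1), \bm{\Phi}(\ty_2))$. Combining the two inequalities yields the desired equality~\eqref{l7.2:e2}.

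There is no real obstacle here: the only thing to be careful about is that the argument uses nothing about $\bm{\Phi}$ beyond hypothesis~\eqref{l7.2:e1} (in particular, $\bm{\Phi}$ need not be surjective or injective), and that it relies solely on the pseudometric axioms for $d^{\tr}$ rather than on the explicit $\limsup$ formula~\eqref{e2.10}. Should one prefer to avoid invoking the pseudometric property abstractly, the same two chains of inequalities can be read off directly from~\eqref{e2.3}, the $\limsup$ form of the triangle inequality; but appealing to the already verified pseudometric structure keeps the proof shortest.
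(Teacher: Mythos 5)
Your proof is correct and is essentially the same argument as the paper's: the paper simply compresses your two triangle-inequality chains into the single estimate \(\left|d^{\tr}(\bm{\Phi}(\ty_1), \bm{\Phi}(\ty_2)) - d^{\tr}(\ty_1, \ty_2)\right| \leqslant d^{\tr}(\ty_1, \bm{\Phi}(\ty_1)) + d^{\tr}(\ty_2, \bm{\Phi}(\ty_2)) = 0\), relying on exactly the same pseudometric properties of \(d^{\tr}\) and nothing else about \(\bm{\Phi}\).
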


\begin{proof}
Let \(\ty_1\), \(\ty_2 \in Seq(Y, \tr)\). Then, using the triangle inequality and \eqref{l7.2:e1}, we obtain
\[
\left|d^{\tr}(\bm{\Phi}(\ty_1), \bm{\Phi}(\ty_2)) - d^{\tr}(\ty_1, \ty_2)\right| \leqslant \left|d^{\tr}(\ty_1, \bm{\Phi}(\ty_1))\right| + \left|d^{\tr}(\ty_2, \bm{\Phi}(\ty_2))\right| = 0.
\]
Equality~\eqref{l7.2:e2} follows.
\end{proof}

\begin{lemma}\label{l7.3}
Let \((X, d)\) be a metric space, \(\tr\) be a scaling sequence, let \(Y\) and \(Z\) be unbounded subspaces of \((X, d)\) which are asymptotically equivalent with respect to \(\tr\). If \(\bm{\Phi} \colon Seq(Y, \tr) \to Seq(Z, \tr)\) satisfy \eqref{l7.2:e1} for every \(\ty \in Seq(Y, \tr)\), then, for every \(\tz_1 \in Seq(Z, \tr)\), there is \(\ty_1 \in Seq(Y, \tr)\) such that
\begin{equation}\label{l7.3:e2}
d^{\tr}(\bm{\Phi}(\ty_1), \tz_1) = 0.
\end{equation}
\end{lemma}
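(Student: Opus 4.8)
The plan is to exploit the symmetry built into the definition of asymptotic equivalence and then close the gap with a single application of the triangle inequality for the pseudometric \(d^{\tr}\). Since \(Y\) and \(Z\) are asymptotically equivalent with respect to \(\tr\), Definition~\ref{d7.1} furnishes (in particular) a mapping \(\bm{\Psi} \colon Seq(Z, \tr) \to Seq(Y, \tr)\) satisfying
\[
d^{\tr}(\tz, \bm{\Psi}(\tz)) = 0
\]
for every \(\tz \in Seq(Z, \tr)\). Given \(\tz_1 \in Seq(Z, \tr)\), I would simply take \(\ty_1 := \bm{\Psi}(\tz_1) \in Seq(Y, \tr)\) as the desired preimage candidate, so that \(d^{\tr}(\tz_1, \ty_1) = 0\) holds by construction.

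It then remains to verify \eqref{l7.3:e2}. Because \(\bm{\Phi}(\ty_1) \in Seq(Z, \tr)\), \(\ty_1 \in Seq(Y, \tr)\) and \(\tz_1 \in Seq(Z, \tr)\) are all elements of \(Seq(X, \tr)\), on which \(d^{\tr}\) is a pseudometric, the triangle inequality applies and gives
\[
d^{\tr}(\bm{\Phi}(\ty_1), \tz_1) \leqslant d^{\tr}(\bm{\Phi}(\ty_1), \ty_1) + d^{\tr}(\ty_1, \tz_1).
\]
The first summand vanishes by the hypothesis \eqref{l7.2:e1} applied to \(\ty = \ty_1\) (together with the symmetry of \(d^{\tr}\)), and the second vanishes by the choice of \(\ty_1\); hence \(d^{\tr}(\bm{\Phi}(\ty_1), \tz_1) = 0\), as required.

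There is no serious obstacle here: the only point requiring care is that the \(\bm{\Phi}\) in the statement is an \emph{arbitrary} mapping satisfying \eqref{l7.2:e1}, not necessarily the mapping produced by Definition~\ref{d7.1}; but since \eqref{l7.2:e1} is assumed to hold for \emph{every} \(\ty \in Seq(Y, \tr)\), it holds in particular for the specific \(\ty_1\) selected above, which is all the argument needs. Combined with Lemma~\ref{l7.2}, this lemma shows that \(\bm{\Phi}\) satisfies both requirements of Definition~\ref{d2.5} and is therefore a pseudoisometry of \((Seq(Y, \tr), d^{\tr})\) and \((Seq(Z, \tr), d^{\tr})\).
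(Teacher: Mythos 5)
Your proof is correct and follows essentially the same route as the paper: both take \(\ty_1 := \bm{\Psi}(\tz_1)\) with \(\bm{\Psi}\) supplied by Definition~\ref{d7.1}, and combine \(d^{\tr}(\bm{\Phi}(\ty_1), \ty_1) = 0\) with \(d^{\tr}(\ty_1, \tz_1) = 0\). The only cosmetic difference is that you invoke the triangle inequality directly, while the paper cites the transitivity of the relation \(\coherent{0}\) (Proposition~\ref{ch2:p1}), which amounts to the same thing.
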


\begin{proof}
Suppose that \(\bm{\Phi} \colon Seq(Y, \tr) \to Seq(Z, \tr)\) satisfies \eqref{l7.2:e1} for every \(\ty \in Seq(Y, \tr)\). Since \(Y\) and \(Z\) are asymptotically equivalent with respect to \(\tr\), there is \(\bm{\Psi} \colon Seq(Z, \tr) \to Seq(Y, \tr)\) such that
\begin{equation}\label{l7.3:e3}
d^{\tr}(\bm{\Psi}(\tz), \tz) = 0
\end{equation}
for every \(\tz \in Seq(Z, \tr)\).

Let \(\tz_1\) be an arbitrary element of \(Seq(Z, \tr)\). Write \(\ty_1 := \bm{\Psi}(\tz_1)\). Then \(\ty_1 \in Seq(Y, \tr)\) and, using \eqref{l7.2:e1} with \(\ty = \ty_1\), we obtain
\begin{equation}\label{l7.3:e4}
d^{\tr}(\bm{\Phi}(\ty_1), \ty_1) = d^{\tr}(\bm{\Phi}(\bm{\Psi}(\tz_1)), \ty_1) = 0.
\end{equation}
It follows from \eqref{l7.3:e3} with \(\tz = \tz_1\) that 
\begin{equation}\label{l7.3:e5}
d^{\tr}(\bm{\Psi}(\tz_1), \tz_1) = 0.
\end{equation}
Since \(\coherent{0}\) is an equivalence relation on every pseudometric space (see Proposition~\ref{ch2:p1}), equalities \eqref{l7.3:e4}--\eqref{l7.3:e5} imply \eqref{l7.3:e2}.
\end{proof}

\begin{proposition}\label{p7.4}
Let \((X, d)\) be a metric space, \(\tr\) be a scaling sequence, and let \(Y\) and \(Z\) be unbounded subspaces of \((X, d)\) which are asymptotically equivalent with respect to \(\tr\). Then the pseudometric subspaces \(Seq(Y, \tr)\) and \(Seq(Z, \tr)\) of the pseudometric space \((Seq(X, \tr), d^{\tr})\) are pseudoisometric.
\end{proposition}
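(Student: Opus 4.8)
The plan is to verify directly that the mapping $\bm{\Phi}$ supplied by Definition~\ref{d7.1} is itself a pseudoisometry in the sense of Definition~\ref{d2.5}, so that no new construction is needed. Recall that asymptotic equivalence of $Y$ and $Z$ furnishes $\bm{\Phi} \colon Seq(Y, \tr) \to Seq(Z, \tr)$ satisfying $d^{\tr}(\ty, \bm{\Phi}(\ty)) = 0$ for every $\ty \in Seq(Y, \tr)$; since $Y, Z \subseteq X$, we may regard $Seq(Y, \tr)$ and $Seq(Z, \tr)$ as pseudometric subspaces of $(Seq(X, \tr), d^{\tr})$, and every distance in Definition~\ref{d2.5} is then computed with the single pseudometric $d^{\tr}$ on $Seq(X, \tr)$.

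First I would check statement~\ref{d2.11:s1} of Definition~\ref{d2.5}, that $\bm{\Phi}$ is distance preserving. This is precisely the content of Lemma~\ref{l7.2}: from $d^{\tr}(\ty, \bm{\Phi}(\ty)) = 0$ it follows that $d^{\tr}(\bm{\Phi}(\ty_1), \bm{\Phi}(\ty_2)) = d^{\tr}(\ty_1, \ty_2)$ for all $\ty_1, \ty_2 \in Seq(Y, \tr)$, so the first defining property holds at once. Next I would check statement~\ref{d2.11:s2}, that every point of $Seq(Z, \tr)$ lies at zero $d^{\tr}$-distance from some image $\bm{\Phi}(\ty)$. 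This is exactly Lemma~\ref{l7.3}, which exploits the full strength of asymptotic equivalence through the companion map $\bm{\Psi}$: for every $\tz \in Seq(Z, \tr)$ there is $\ty \in Seq(Y, \tr)$ with $d^{\tr}(\bm{\Phi}(\ty), \tz) = 0$. Having verified both conditions, I conclude that $\bm{\Phi}$ is a pseudoisometry of $Seq(Y, \tr)$ and $Seq(Z, \tr)$, whence these spaces are pseudoisometric.

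I do not expect any genuine obstacle here: the two preparatory lemmas were tailored to deliver exactly the two defining properties of a pseudoisometry, and the proposition is simply their conjunction applied to the map $\bm{\Phi}$ from Definition~\ref{d7.1}. The only point deserving a word of care is the bookkeeping observation that $\bm{\Phi}$ takes values in $Seq(Z, \tr)$ and that the notion of pseudoisometry from Section~\ref{sec2} applies verbatim to these subspaces, since the ambient pseudometric $d^{\tr}$ restricts to each of them.
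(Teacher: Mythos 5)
Your proposal is correct and coincides with the paper's own proof: the paper likewise takes the mapping \(\bm{\Phi}\) from Definition~\ref{d7.1}, invokes Lemma~\ref{l7.2} to establish the distance-preserving property and Lemma~\ref{l7.3} to establish condition \ref{d2.11:s2} of Definition~\ref{d2.5}, and concludes that \(\bm{\Phi}\) is a pseudoisometry of \(Seq(Y, \tr)\) and \(Seq(Z, \tr)\).
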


\begin{proof}
By Definition~\ref{d7.1}, there are \(\bm{\Phi} \colon Seq(Y, \tr) \to Seq(Z, \tr)\) and \(\bm{\Psi} \colon Seq(Z, \tr) \to Seq(Y, \tr)\) such that
\[
d^{\tr}(\ty, \bm{\Phi}(\ty)) = d^{\tr}(\tz, \bm{\Psi}(\tz)) = 0
\]
holds for all \(\ty \in Seq(Y, \tr)\) and \(\tz \in Seq(Z, \tr)\). Hence, by Lemma~\ref{l7.2}, the mapping \(\bm{\Phi}\) is distance preserving. Moreover, by Lemma~\ref{l7.3}, for every \(\tz_1 \in Seq(Z, \tr)\) there is \(\ty_1 \in Seq(Y, \tr)\) such that
\[
d^{\tr}(\bm{\Phi}(\ty_1), \tz_1) = 0.
\]
Hence, \(\bm{\Phi}\) is a pseudoisometry of \(Seq(Y, \tr)\) and \(Seq(Z, \tr)\) by Definition~\ref{d2.5}.
\end{proof}

\begin{corollary}\label{c7.5}
Let \((X, d)\) be a metric space, \(\tr\) be a scaling sequence, and let \(Y\) and \(Z\) be unbounded subspaces of \((X, d)\) which are asymptotically equivalent with respect to \(\tr\). Then the metric identification \(\bfpretan{Y}{r}\) of \(Seq(Y, \tr)\) is isometric to the metric identification \(\bfpretan{Z}{r}\) of \(Seq(Z, \tr)\).
\end{corollary}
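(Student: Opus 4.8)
The plan is to obtain the corollary as an immediate consequence of Proposition~\ref{p7.4} together with Theorem~\ref{t2.10}; no new computation is required, since all the substantive work has already been done in establishing the pseudoisometry. The argument is a two-step citation, and the statement itself has already performed the relevant bookkeeping by naming $\bfpretan{Y}{r}$ and $\bfpretan{Z}{r}$ as the metric identifications of $Seq(Y, \tr)$ and $Seq(Z, \tr)$.

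First I would recall that the hypotheses are exactly those of Proposition~\ref{p7.4}: $(X, d)$ is a metric space, $\tr$ a scaling sequence, and $Y$, $Z$ unbounded subspaces of $(X, d)$ that are asymptotically equivalent with respect to $\tr$. That proposition then tells us that the pseudometric subspaces $(Seq(Y, \tr), d^{\tr})$ and $(Seq(Z, \tr), d^{\tr})$ of $(Seq(X, \tr), d^{\tr})$ are pseudoisometric. Concretely, the mapping $\bm{\Phi}$ furnished by Definition~\ref{d7.1} is distance preserving by Lemma~\ref{l7.2} and meets condition~\ref{d2.11:s2} of Definition~\ref{d2.5} by Lemma~\ref{l7.3}, so it is a pseudoisometry of $Seq(Y, \tr)$ and $Seq(Z, \tr)$.

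Next I would invoke Theorem~\ref{t2.10}, which asserts that two pseudometric spaces are pseudoisometric if and only if their metric identifications are isometric metric spaces. Applying the forward direction of this equivalence to the pair $(Seq(Y, \tr), d^{\tr})$ and $(Seq(Z, \tr), d^{\tr})$ shows that their metric identifications are isometric. Since, by the definitions of Section~\ref{sec5}, these metric identifications are precisely $(\bfpretan{Y}{r}, \bm{\rho})$ and $(\bfpretan{Z}{r}, \bm{\rho})$, the desired isometry follows.

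The proof thus contains no genuine obstacle beyond ensuring that Theorem~\ref{t2.10}, stated for an abstract pair of pseudometric spaces, is applied to the \emph{subspaces} $Seq(Y, \tr)$ and $Seq(Z, \tr)$ carrying the restricted pseudometric $d^{\tr}$, and that the notation $\bfpretan{Y}{r}$ is identified with the corresponding metric identification. Both points are already settled by the formulation of Proposition~\ref{p7.4} and of the corollary itself, so the verification is purely a matter of matching notation rather than of mathematical content.
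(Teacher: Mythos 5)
Your proposal is correct and follows exactly the paper's own route: the paper proves Corollary~\ref{c7.5} by citing Proposition~\ref{p7.4} together with Theorem~\ref{t2.10}, which is precisely your two-step argument. Your additional remarks on matching the notation \(\bfpretan{Y}{r}\), \(\bfpretan{Z}{r}\) with the metric identifications are a harmless elaboration of the same proof.
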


\begin{proof}
It follows directly from Proposition~\ref{p7.4} and Theorem~\ref{t2.10}.
\end{proof}

Let \(A\) be an unbounded subset of a metric space \((X, d)\) and let \(\tr\) be a scaling sequence. In what follows, we will denote by \(\overline{Seq(A, \tr)}\) the closure of \(Seq(A, \tr)\) in the pseudometric space \((Seq(X, \tr), d^{\tr})\).

\begin{proposition}\label{p7.2}
Let \((X, d)\) be an unbounded metric space and let \(\tr\) be a scaling sequence. Then the equality
\begin{equation}\label{p7.2:e1}
\overline{Seq(Y, \tr)} = \overline{Seq(Z, \tr)}
\end{equation}
holds for any two unbounded \(Y\), \(Z \subseteq X\) which are asymptotically equivalent with respect to \(\tr\).
\end{proposition}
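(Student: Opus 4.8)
The plan is to obtain the equality directly from Corollary~\ref{c2.9}, applied to the pseudometric space \((Seq(X, \tr), d^{\tr})\) with \(A_1 := Seq(Y, \tr)\) and \(A_2 := Seq(Z, \tr)\). Since \(Y\), \(Z \subseteq X\), both sets are indeed subsets of \(Seq(X, \tr)\), so the corollary is applicable. Recall that Corollary~\ref{c2.9} asserts \(\overline{A}_1 = \overline{A}_2\) whenever statement~\ref{c2.9:s1} holds, i.e., whenever for all \(x_1 \in A_1\) and \(x_2 \in A_2\) there are \(y_1 \in A_1\) and \(y_2 \in A_2\) with \(d^{\tr}(x_1, y_2) = d^{\tr}(x_2, y_1) = 0\). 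Thus the whole proof reduces to checking this single condition.

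To verify it, I would invoke Definition~\ref{d7.1}: since \(Y\) and \(Z\) are asymptotically equivalent with respect to \(\tr\), there exist mappings \(\bm{\Phi} \colon Seq(Y, \tr) \to Seq(Z, \tr)\) and \(\bm{\Psi} \colon Seq(Z, \tr) \to Seq(Y, \tr)\) satisfying \eqref{d7.1:e1}. Given arbitrary \(\ty_1 \in Seq(Y, \tr)\) (in the role of \(x_1\)) and \(\tz_1 \in Seq(Z, \tr)\) (in the role of \(x_2\)), I would set \(y_2 := \bm{\Phi}(\ty_1) \in Seq(Z, \tr)\) and \(y_1 := \bm{\Psi}(\tz_1) \in Seq(Y, \tr)\). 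Then \eqref{d7.1:e1} yields \(d^{\tr}(\ty_1, y_2) = d^{\tr}(\ty_1, \bm{\Phi}(\ty_1)) = 0\) and \(d^{\tr}(\tz_1, y_1) = d^{\tr}(\tz_1, \bm{\Psi}(\tz_1)) = 0\), which is exactly statement~\ref{c2.9:s1}. Corollary~\ref{c2.9} then gives \eqref{p7.2:e1}.

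There is essentially no genuine obstacle here beyond handling the notation correctly; the only point requiring care is the placement of the two sides in statement~\ref{c2.9:s1}, namely that \(\bm{\Phi}\) supplies the required partner in \(A_2\) for a point of \(A_1\), while \(\bm{\Psi}\) supplies the partner in \(A_1\) for a point of \(A_2\). Equivalently, one could run the verification through Remark~\ref{r7.2}, whose reformulation of asymptotic equivalence is literally statement~\ref{c2.9:s1} written out with \(d^{\tr}(\cdot,\cdot) = \limsup_{n\to\infty} d(\cdot,\cdot)/r_n\). I would also note that only the sufficiency half of Corollary~\ref{c2.9} is used, so no completeness hypothesis on \(Seq(Y,\tr)\) or \(Seq(Z,\tr)\) is needed for this direction.
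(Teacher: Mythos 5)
Your proof is correct, and in spirit it is the paper's proof: the paper disposes of Proposition~\ref{p7.2} in one line, reducing it to the pseudometric-space results of Section~\ref{sec2} together with Definition~\ref{d7.1}, and your verification of statement~\ref{c2.9:s1} via \(\bm{\Phi}\) and \(\bm{\Psi}\) is exactly the content of that reduction. The one genuine difference is which Section~\ref{sec2} result is cited, and here your choice is the better one. The paper's proof invokes Corollary~\ref{c2.8}, which is stated for \emph{complete} subsets of a pseudometric space; completeness of \(Seq(Y, \tr)\) and \(Seq(Z, \tr)\) in \((Seq(X, \tr), d^{\tr})\) is precisely what Conjecture~\ref{con5.1} asserts but does not prove, which is why the paper keeps the completeness-dependent direction for the conditional Proposition~\ref{p7.7}. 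You instead use only the sufficiency half of Corollary~\ref{c2.9}, i.e.\ the implication from \ref{c2.9:s1} to \eqref{c2.9:e1}, which (as the paper's own proof of that corollary notes) follows directly from Definition~\ref{d2.4} and needs no completeness at all --- equivalently, one needs only the trivial inclusion \(\bigcup_{a \in A} [a]_0 \subseteq \overline{A}\), never the full equality \eqref{c2.8:e0}. So your argument is unconditional and, strictly speaking, more accurately sourced than the paper's one-line citation; your closing remark that no completeness hypothesis enters this direction is exactly the right point to emphasize.
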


\begin{proof}
It follows from Corollary~\ref{c2.8} and Definition~\ref{d7.1}.
\end{proof}

If Conjecture~\ref{con5.1} is valid, then, using Corollary~\ref{c2.9}, we obtain the following. 

\begin{proposition}\label{p7.7}
Let \((X, d)\) be a metric space and let \(\tr\) be a scaling sequence. Unbounded subspaces \(Y\) and \(Z\) of \((X, d)\) are asymptotically equivalent with respect to \(\tr\) if and only if the equality
\[
\overline{Seq(Y, \tr)} = \overline{Seq(Z, \tr)}
\]
holds.
\end{proposition}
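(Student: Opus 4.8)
The plan is to recognize that the asymptotic equivalence of \(Y\) and \(Z\) with respect to \(\tr\) is, verbatim, statement \ref{c2.9:s1} of Corollary~\ref{c2.9} applied to the subsets \(A_1 = Seq(Y, \tr)\) and \(A_2 = Seq(Z, \tr)\) of the pseudometric space \((Seq(X, \tr), d^{\tr})\). Indeed, by Remark~\ref{r7.2}, Definition~\ref{d7.1} unfolds to the requirement that for all \(\ty_1 \in Seq(Y, \tr)\) and \(\tz_1 \in Seq(Z, \tr)\) there exist \(\ty_2 \in Seq(Y, \tr)\) and \(\tz_2 \in Seq(Z, \tr)\) with \(d^{\tr}(\ty_1, \tz_2) = d^{\tr}(\tz_1, \ty_2) = 0\), which is precisely \ref{c2.9:s1} (here a vanishing \(\limsup\) is the same as a vanishing limit). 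Thus the whole proposition will follow from the two halves of Corollary~\ref{c2.9}, once the completeness hypothesis needed for its second half is secured.

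First I would treat the ``only if'' direction, which needs no completeness: if \(Y\) and \(Z\) are asymptotically equivalent, then \ref{c2.9:s1} holds, and the first assertion of Corollary~\ref{c2.9} immediately yields \(\overline{Seq(Y, \tr)} = \overline{Seq(Z, \tr)}\). This is nothing but Proposition~\ref{p7.2}.

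For the ``if'' direction I would invoke the second assertion of Corollary~\ref{c2.9}, which demands that \(Seq(Y, \tr)\) and \(Seq(Z, \tr)\) be complete subsets of \((Seq(X, \tr), d^{\tr})\); granting this, equality of the closures forces \ref{c2.9:s1}, hence asymptotic equivalence. This is the step that uses Conjecture~\ref{con5.1}: applied to the unbounded subspaces \(Y\) and \(Z\) (whose metrics are the restrictions of \(d\)), the conjecture asserts that \((Seq(Y, \tr), d^{\tr})\) and \((Seq(Z, \tr), d^{\tr})\) are complete pseudometric spaces in their own right. I would then pass from this intrinsic completeness to completeness as subsets of the ambient space by noting that \(Seq(Y, \tr)\) carries exactly the restriction of \(d^{\tr}\), so that a sequence lying in \(Seq(Y, \tr)\) is Cauchy, and converges to a given limit, in the subspace if and only if it does so in \((Seq(X, \tr), d^{\tr})\).

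The only non-routine point is this last identification --- that the completeness furnished by Conjecture~\ref{con5.1} for the standalone space \((Seq(Y, \tr), d^{\tr})\) coincides with \(Seq(Y, \tr)\) being a complete subset of \((Seq(X, \tr), d^{\tr})\) in the sense of Definition~\ref{d2.6}. Once this compatibility of the two completeness notions is recorded, the argument reduces to the two applications of Corollary~\ref{c2.9} above, and nothing further is required.
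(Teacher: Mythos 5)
Your proposal is correct and is essentially the paper's own argument: the paper derives Proposition~\ref{p7.7} in a single sentence from Corollary~\ref{c2.9} under the assumption that Conjecture~\ref{con5.1} holds, which is exactly your decomposition (the ``only if'' half via statement \ref{c2.9:s1} of Corollary~\ref{c2.9}, i.e.\ Proposition~\ref{p7.2}, and the ``if'' half via the completeness hypothesis supplied by the conjecture applied to \(Y\) and \(Z\)). Your explicit verification that intrinsic completeness of \((Seq(Y, \tr), d^{\tr})\) coincides with completeness of \(Seq(Y, \tr)\) as a subset of \((Seq(X, \tr), d^{\tr})\) in the sense of Definition~\ref{d2.6} is a detail the paper leaves tacit, and it is correctly handled.
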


We shall say that \(Y\) and \(Z\) are \emph{strongly asymptotically equivalent} if \(Y\) and \(Z\) are asymptotically equivalent for all scaling sequences \(\tr\). A sufficient condition under which \(Y\) and \(Z\) are strongly asymptotically equivalent can be given in terms of the so-called \(\varepsilon\)-nets.

Let \((X, d)\) be a metric space. The closed ball with center \(c \in X\) and radius \(r \in (0, \infty)\) is the set
\[
\overline{B}(c, r) := \{x \in X \colon d(x, c) \leqslant r\}.
\]
Let \(W\) be a subset of \(X\) and let \(\varepsilon > 0\). Then a set \(C \subseteq X\) is an \(\varepsilon\)-\emph{net} for \(W\) if
\[
W \subseteq \bigcup_{c \in C} \overline{B}(c, \varepsilon).
\]

\begin{proposition}\label{p5.2}
Let \(Y\) and \(Z\) be unbounded subspaces of a metric space \((X, d)\). If there is \(\varepsilon > 0\) such that \(Y\) is an \(\varepsilon\)-net to \(Z\) and vice versa, then \(X\) and \(Y\) are strongly asymptotically equivalent.
\end{proposition}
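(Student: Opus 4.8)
The plan is to construct explicitly, for an arbitrary scaling sequence \(\tr\), the two maps \(\bm{\Phi}\) and \(\bm{\Psi}\) required by Definition~\ref{d7.1}, thereby showing that \(Y\) and \(Z\) are asymptotically equivalent with respect to \(\tr\); since \(\tr\) is then arbitrary, this yields strong asymptotic equivalence. (Here the statement should read ``\(Y\) and \(Z\)''.) The idea is that the \(\varepsilon\)-net hypotheses furnish, already at the level of the space \(X\), pointwise ``nearby'' maps between \(Y\) and \(Z\), and that applying them coordinatewise produces sequences that are indistinguishable at infinity because the displacement \(\varepsilon\) is fixed while \(r_n \to \infty\).

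First I would use the Axiom of Choice to fix maps \(g \colon Y \to Z\) and \(h \colon Z \to Y\) with \(d(y, g(y)) \leqslant \varepsilon\) for all \(y \in Y\) and \(d(z, h(z)) \leqslant \varepsilon\) for all \(z \in Z\); these exist precisely because \(Z\) is an \(\varepsilon\)-net for \(Y\) and \(Y\) is an \(\varepsilon\)-net for \(Z\). Fix a scaling sequence \(\tr = \seq{r_n}{n}\) and define \(\bm{\Phi} \colon Seq(Y, \tr) \to Seq(Z, \tr)\) and \(\bm{\Psi} \colon Seq(Z, \tr) \to Seq(Y, \tr)\) by \(\bm{\Phi}(\seq{y_n}{n}) = \seq{g(y_n)}{n}\) and \(\bm{\Psi}(\seq{z_n}{n}) = \seq{h(z_n)}{n}\). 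The next step is to check that \(\bm{\Phi}\) indeed lands in \(Seq(Z, \tr)\). Fixing a base point \(p \in X\) (legitimate by Remark~\ref{r1.1.3}), the inequality \(|d(g(y_n), p) - d(y_n, p)| \leqslant d(y_n, g(y_n)) \leqslant \varepsilon\) together with \(r_n \to \infty\) gives \(\lim_{n \to \infty} \frac{d(g(y_n), p)}{r_n} = \lim_{n \to \infty} \frac{d(y_n, p)}{r_n}\), so the defining finite limit persists and \(\bm{\Phi}(\ty) \in Seq(Z, \tr)\); the same estimate handles \(\bm{\Psi}\).

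Finally I would verify condition~\eqref{d7.1:e1}. For \(\ty = \seq{y_n}{n} \in Seq(Y, \tr)\) we have, by~\eqref{e2.10}, \(d^{\tr}(\ty, \bm{\Phi}(\ty)) = \limsup_{n \to \infty} \frac{d(y_n, g(y_n))}{r_n} \leqslant \limsup_{n \to \infty} \frac{\varepsilon}{r_n} = 0\), and symmetrically \(d^{\tr}(\tz, \bm{\Psi}(\tz)) = 0\) for every \(\tz \in Seq(Z, \tr)\). Thus \(Y\) and \(Z\) are asymptotically equivalent with respect to \(\tr\), and as \(\tr\) was arbitrary they are strongly asymptotically equivalent. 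I do not expect a genuine obstacle here: the argument is essentially a uniform-boundedness estimate. The only point meriting care is confirming that the coordinatewise image still belongs to \(Seq(Z, \tr)\) (equivalently, that the required finite limit survives), which is exactly where the fixed bound \(\varepsilon\) and \(r_n \to \infty\) are used; everything else is immediate from the definitions.
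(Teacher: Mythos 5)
Your proof is correct and takes essentially the same route as the paper: the paper's entire proof is ``It follows directly from Remark~\ref{r7.2}'' (the sequence-wise reformulation of Definition~\ref{d7.1}), and your explicit construction of \(\bm{\Phi}\) and \(\bm{\Psi}\) from \(\varepsilon\)-net selection maps, together with the estimate \(\varepsilon/r_n \to 0\), is precisely the verification that remark encapsulates. You are also right about the typo: the conclusion should read ``\(Y\) and \(Z\)'' rather than ``\(X\) and \(Y\)''.
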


\begin{proof}
It follows directly from Remark~\ref{r7.2}.
\end{proof}

\begin{example}\label{ex5.3}
Let \(n \in \NN\) and let \((X, d)\) be the \(n\)-dimensional Euclidean space \(\RR^{n}\). Write \(\mathbb{Z}^n\) and \(\mathbb{Q}^n\) for the \(n\)-ary Cartesian powers of the set of all integer numbers and the set of all rational numbers, respectively. Then \(\mathbb{Z}^n\) and \(\mathbb{Q}^n\) are strongly asymptotically equivalent as subspaces of \(\RR^{n}\).
\end{example}

\begin{example}\label{ex3.1.2}
Let $Y$ be an unbounded subspace of $(X, d)$ and let $\ol{Y}$ be the closure of $Y$ in $(X,d)$. Then $Y$ and $\ol{Y}$ are strongly asymptotically equivalent.
\end{example}

Let $Y$ be an unbounded subspace of $(X,d)$, let $\tr$ be a scaling sequence and let $\tilde{F} \subseteq Seq(X, \tr)$. Let us define a set $[\tilde{F}]_Y$ by the rule:
\begin{equation}\label{e3.1.1}
(\ty \in [\tilde{F}]_Y) \Leftrightarrow \bigl((\ty \in Seq(Y, \tr)) \mathbin{\&} (\exists \tx \in \tilde{F} \colon d^{\tr}(\tx, \ty) = 0)\bigr).
\end{equation}
Note that $[\tilde{F}]_Y$ can be empty for a non-void $\tilde{F}$ if the set $X \setminus Y$ is ``big enough''.

\begin{proposition}\label{p3.1.2}
Let $Y$ and $Z$ be unbounded subspaces of an unbounded metric space $(X,d)$ and let $\tr$ be a scaling sequence. Suppose that $Y$ and $Z$ are asymptotically equivalent with respect to $\tr$. Then following statements hold for every maximal self-stable set $\sstable{Z}{r} \subseteq Seq(Z, \tr)$.
\begin{enumerate}
\item\label{p3.1.2s1} The family $[\sstable{Z}{r}]_Y$ is maximal self-stable and we have the equalities
\begin{equation}\label{p3.1.2e1}
[[\sstable{Z}{r}]_Y]_Z = \sstable{Z}{r} = [\sstable{Z}{r}]_Z.
\end{equation}
\item\label{p3.1.2s2} If $\pretan{Z}{r}$ and $\pretan{Y}{r}$ are the metric identifications of $\sstable{Z}{r}$ and, respectively, of $\sstable{Y}{r} := [\sstable{Z}{r}]_Y$, then the mapping 
\begin{equation}\label{p3.1.2e2}
\pretan{Z}{r} \ni \nu \longmapsto [\nu]_Y \in \pretan{Y}{r}
\end{equation}
is an isometry. Furthermore, if $\pretan{Z}{r}$ is tangent, then $\pretan{Y}{r}$ is also tangent.
\end{enumerate}
\end{proposition}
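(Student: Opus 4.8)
The plan is to establish statement~\ref{p3.1.2s1} first, deduce the isometry in~\ref{p3.1.2s2} from the machinery of Section~\ref{sec7}, and treat the tangency assertion last, since it is the delicate point. For~\ref{p3.1.2s1}, write \(\sstable{Y}{r} := [\sstable{Z}{r}]_Y\). I would begin with \(\sstable{Z}{r} = [\sstable{Z}{r}]_Z\): the inclusion \(\supseteq\) is trivial, and for \(\subseteq\) note that if \(\tz \in \sstable{Z}{r}\), \(\ty \in Seq(Z, \tr)\) and \(d^{\tr}(\tz, \ty) = 0\), then \(\ty\) is mutually stable with each element of \(\sstable{Z}{r}\) by Lemma~\ref{l1.2.5}, whence \(\ty \in \sstable{Z}{r}\) by Lemma~\ref{l1.2.4}. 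The two remaining equalities of~\eqref{p3.1.2e1} then follow: \([[\sstable{Z}{r}]_Y]_Z \subseteq \sstable{Z}{r}\) from the triangle inequality for \(d^{\tr}\) and \(\sstable{Z}{r} = [\sstable{Z}{r}]_Z\), and the reverse inclusion by sending \(\tz \in \sstable{Z}{r}\) to its partner \(\bm{\Psi}(\tz)\) of Definition~\ref{d7.1}. Self-stability of \(\sstable{Y}{r}\) is a twofold use of Lemma~\ref{l1.2.5}. For maximality, given \(\tilde w \in Seq(Y, \tr)\) mutually stable with all of \(\sstable{Y}{r}\), I would set \(\tz := \bm{\Phi}(\tilde w)\) and, using that every \(\tz_1 \in \sstable{Z}{r}\) has a partner \(\bm{\Psi}(\tz_1) \in \sstable{Y}{r}\), apply Lemma~\ref{l1.2.5} twice to show \(\tz\) is mutually stable with all of \(\sstable{Z}{r}\); then \(\tz \in \sstable{Z}{r}\) by Lemma~\ref{l1.2.4}, and \(d^{\tr}(\tz, \tilde w) = 0\) gives \(\tilde w \in \sstable{Y}{r}\).

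For the isometry in~\ref{p3.1.2s2}, the quickest route is to recognise \(\nu \mapsto [\nu]_Y\) as the restriction to \(\pretan{Z}{r}\) of the isometry \(\bfpretan{Z}{r} \to \bfpretan{Y}{r}\) supplied by Corollary~\ref{c7.5} (applied to the pseudoisometry \(\bm{\Psi}\)). Fixing \(\tz \in \nu\), one checks \([\nu]_Y = \{\ty \in \sstable{Y}{r} \colon d^{\tr}(\tz, \ty) = 0\}\), which is a single point of \(\pretan{Y}{r}\), namely the class of \(\bm{\Psi}(\tz)\); injectivity and well-definedness come from the triangle inequality, surjectivity from~\ref{p3.1.2s1}, and distance preservation from \(|d^{\tr}(\bm{\Psi}(\tz_1), \bm{\Psi}(\tz_2)) - d^{\tr}(\tz_1, \tz_2)| \le d^{\tr}(\tz_1, \bm{\Psi}(\tz_1)) + d^{\tr}(\tz_2, \bm{\Psi}(\tz_2)) = 0\), exactly as in Lemma~\ref{l7.2}.

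The tangency claim I would reduce, via Proposition~\ref{p1.3.1}, to showing that \(\tilde{Y}'_{\infty, \tr} = \{\ty' \colon \ty \in \sstable{Y}{r}\}\) is maximal self-stable in \(Seq(Y, \tr')\) for each \(\tr' \in \tldbf{R}\), knowing the same for \(\tilde{Z}'_{\infty, \tr}\). Self-stability is immediate from~\eqref{e1.1.6}, so let \(\tilde u = \seq{u_k}{k} \in Seq(Y, \tr')\) be mutually stable with every element of \(\tilde{Y}'_{\infty, \tr}\). Since, by~\ref{p3.1.2s1}, every \(\tz' \in \tilde{Z}'_{\infty, \tr}\) has a \(Y\)-partner \(\ty' \in \tilde{Y}'_{\infty, \tr}\) with \(d^{\tr'}(\ty', \tz') = 0\), Lemma~\ref{l1.2.5} shows that \(\tilde u\) is mutually stable with all of \(\tilde{Z}'_{\infty, \tr}\) as well. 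Suppose now one knows the key approximation property that \(\inf\{d^{\tr'}(\tilde u, \ty') \colon \ty \in \sstable{Y}{r}\} = 0\). Choosing \(\ty_{(m)} \in \sstable{Y}{r}\) with \(d^{\tr'}(\tilde u, \ty_{(m)}') \to 0\), the identity~\eqref{e1.1.6} and the triangle inequality make \((\pi(\ty_{(m)}))_{m}\) a Cauchy sequence in \(\pretan{Y}{r}\), which by completeness (Lemma~\ref{l4.3}) converges to some \(\pi(\ty^{*})\); the \(1\)-Lipschitz continuity of \(\ty \mapsto d^{\tr'}(\tilde u, \ty')\) then yields \(d^{\tr'}(\tilde u, (\ty^{*})') = 0\). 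Finally I would splice: the sequence \(\ty^{\sharp}\) with \(y^{\sharp}_{n_k} = u_k\) and \(y^{\sharp}_n = y^{*}_n\) otherwise satisfies \(d^{\tr}(\ty^{*}, \ty^{\sharp}) = 0\), so \(\ty^{\sharp} \in Seq(Y, \tr)\) by Lemma~\ref{l4.6} and \(\ty^{\sharp} \in \sstable{Y}{r}\) by Lemmas~\ref{l1.2.5} and~\ref{l1.2.4}; as \((\ty^{\sharp})' = \tilde u\), this gives \(\tilde u \in \tilde{Y}'_{\infty, \tr}\), proving maximality.

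The main obstacle is exactly the approximation property invoked above: that a sequence \(\tilde u \in Seq(Y, \tr')\) mutually stable with all of \(\tilde{Y}'_{\infty, \tr}\) (equivalently, with \(\tilde{Z}'_{\infty, \tr}\)) lies in the \(d^{\tr'}\)-closure of the \(Z\)-restrictions \(\tilde{Z}'_{\infty, \tr}\). This is a transfer between \(Y\) and \(Z\) along the subsequence \(\tr'\), and it reduces formally neither to asymptotic equivalence at \(\tr\) alone — an exact extension of \(\tilde u\) to \(Seq(Y, \tr)\) is obstructed by the porosity phenomenon of Theorem~\ref{t5.2} — nor to the maximality of \(\tilde{Z}'_{\infty, \tr}\) alone, since \(\tilde u\) is a \(Y\)-sequence rather than a \(Z\)-sequence. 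The decisive step is therefore to combine the tangency of \(\pretan{Z}{r}\), which makes every element of \(\tilde{Z}'_{\infty, \tr}\) the restriction of a full sequence in \(\sstable{Z}{r}\), with asymptotic equivalence at \(\tr\), so as to approximate \(\tilde u\) by such \(Z\)-restrictions; once this closure membership is secured, the completeness-and-splicing argument above closes the proof.
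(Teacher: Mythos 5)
Your proofs of statement~\ref{p3.1.2s1} and of the isometry claim in statement~\ref{p3.1.2s2} are correct and essentially coincide with the paper's: the same bracket identities, the same double use of Lemma~\ref{l1.2.5}, and the same verification that \(\nu \mapsto [\nu]_Y\) is well defined, distance preserving and bijective. The differences are cosmetic: for maximality of \([\sstable{Z}{r}]_Y\) you argue directly through \(\bm{\Phi}\) and Lemmas~\ref{l1.2.5}, \ref{l1.2.4}, where the paper embeds \([\sstable{Z}{r}]_Y\) into a maximal self-stable set and applies the bracket identities twice; and you route the isometry through Corollary~\ref{c7.5}, where the paper checks it by hand.

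The tangency claim is another matter, and the genuine gap is exactly where you flagged it: the ``key approximation property'' \(\inf\{d^{\tr'}(\tilde{u}, \ty')\colon \ty \in \sstable{Y}{r}\} = 0\) is assumed and never proved, so the proposal does not prove the ``Furthermore'' clause. Your conditional machinery around it (transfer of mutual stability to \(\tilde{Z}'_{\infty,\tr}\), the Cauchy argument via completeness and Lemma~\ref{l4.3}, the final splicing via Lemmas~\ref{l4.6}, \ref{l1.2.5}, \ref{l1.2.4}) is sound, but the missing step cannot be supplied: the approximation property, and with it the tangency claim of the proposition itself, is \emph{false} under the stated hypothesis of asymptotic equivalence with respect to the single sequence \(\tr\). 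Your diagnosis (the porosity obstruction of Theorem~\ref{t5.2}) identifies the right phenomenon; what you could not know is that it is fatal rather than merely technical.

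Here is a counterexample. Let \(X = \RR\), \(r_n = 2^{n^2}\), \(Y = \{r_{2k} \colon k \in \NN\}\), \(Z = \{\sqrt{r_m r_{m+1}} \colon m \in \NN\}\), with all normalized distances computed from \(0\) (the choice of base point is immaterial, cf. Remark~\ref{r1.1.3}). Since \(r_{n+1}/r_n \to \infty\), every ratio \(\sqrt{r_m r_{m+1}}/r_n\) is either \(\leqslant \sqrt{r_{n-1}/r_n}\) (when \(m < n\)) or \(\geqslant \sqrt{r_{n+1}/r_n}\) (when \(m \geqslant n\)); hence, for every subsequence \(\ts\) of \(\tr\), every \(\tz \in Seq(Z, \ts)\) satisfies \(\tilde{d}_{\ts}(\tz) = 0\). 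Thus \(Seq(Z,\ts)\) is self-stable, so its unique maximal self-stable subset is \(Seq(Z,\ts)\) itself, all the resulting pretangent spaces are singletons, and \(\pretan{Z}{r}\) is tangent. Similarly, for odd \(n\) every ratio \(r_{2k}/r_n\) is either \(\leqslant r_{n-1}/r_n\) or \(\geqslant r_{n+1}/r_n\), whence \(Seq(Y,\tr) = \sstable{Y}{r}^{0}\); consequently \(Y\) and \(Z\) are asymptotically equivalent with respect to \(\tr\) (constant sequences realize \(\bm{\Phi}\) and \(\bm{\Psi}\)), \(\sstable{Y}{r} := [\sstable{Z}{r}]_Y = \sstable{Y}{r}^{0}\), and \(\pretan{Y}{r}\) is a singleton. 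But for \(\tr' = (r_{2k})_{k \in \NN}\) the sequence \(\tilde{u} = (r_{2k})_{k \in \NN} \subset Y\) belongs to \(Seq(Y, \tr')\) with \(\tilde{d}_{\tr'}(\tilde{u}) = 1\), while \(\tilde{d}_{\tr'}(\ty') = 0\) for every \(\ty \in \sstable{Y}{r}\). Hence \(d^{\tr'}(\tilde{u}, \ty') \geqslant 1\) for all such \(\ty\) — your approximation property fails — yet \(\tilde{u}\) is mutually stable with every such \(\ty'\) by statement~\ref{p1.2.2s6} of Proposition~\ref{p1.2.2}; so \(\tilde{Y}'_{\infty, \tr}\) is not maximal self-stable and \(\pretan{Y}{r}\) is not tangent by Proposition~\ref{p1.3.1}. (Directly: every element of \(Seq(Y,\tr')\) has \(\tilde{d}_{\tr'}\)-value \(0\) or \(1\), the latter being eventually equal to \(\tilde{u}\), so \(Seq(Y,\tr')\) is self-stable and is its own unique maximal self-stable subset; its metric identification has two points, so \(em'\) is not surjective, cf. Remark~\ref{r2.8}.) The same example exposes the hole in the paper's own one-line argument: applying statement~\ref{p3.1.2s1} at the level of \(\tr'\) requires \(Y\) and \(Z\) to be asymptotically equivalent with respect to \(\tr'\), which does not follow from equivalence with respect to \(\tr\) and fails here. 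The ``Furthermore'' clause becomes true — and your scheme, splicing step included, proves it, via statement~\ref{p3.1.2s1} applied to the maximal self-stable set \(\tilde{Z}'_{\infty,\tr} \subseteq Seq(Z, \tr')\) — once the hypothesis is strengthened to asymptotic equivalence with respect to every \(\tr' \in \tldbf{R}(\tr)\), in particular to strong asymptotic equivalence.
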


\begin{proof}
Suppose $\sstable{Z}{r}$ is a maximal self-stable subset of $Seq(Z, \tr)$.

\ref{p3.1.2s1} Let $\ty_1$, $\ty_2 \in \sstable{Y}{r}^* := [\sstable{Z}{r}]_{Y}$. Then, by~\eqref{e3.1.1}, there exist $\tz_1$, $\tz_2 \in \sstable{Z}{r}$ such that
\begin{equation}\label{p3.1.2e3}
d^{\tr}(\ty_1, \tz_1) = d^{\tr}(\ty_2, \tz_2) = 0.
\end{equation}
Since $\tz_1$ and $\tz_2$ are mutually stable, $\ty_1$ and $\ty_2$ are also mutually stable. Consequently, $\sstable{Y}{r}^*$ is self-stable. The similar arguments show that $[\sstable{Y}{r}^*]_{Z}$ is also self-stable. Moreover, since
$$
[[\sstable{Z}{r}]_{Y}]_{Z} = [\sstable{Y}{r}^*]_{Z} \supseteq \sstable{Z}{r},
$$
the maximality of $\sstable{Z}{r}$ implies the first equality in~\eqref{p3.1.2e1}. The second one also simple follows from the maximality of $\sstable{Z}{r}$. It still remains to prove that $\sstable{Y}{r}^*$ is a maximal self-stable subset of $Seq(Y, \tr)$. 

Let $\sstable{Y}{r}$ be a maximal self-stable set in $Seq(Y, \tr)$ such that $\sstable{Y}{r} \supseteq \sstable{Y}{r}^*$. Then $[\sstable{Y}{r}]_Z$ is self-stable and $[\sstable{Y}{r}]_Z \supseteq \sstable{Z}{r}$. Since $\sstable{Z}{r}$ is maximal self-stable, the last inclusion implies the equality $[\sstable{Y}{r}]_Z = \sstable{Z}{r}$. Using the last equality and~\eqref{p3.1.2e1} we obtain
$$
\sstable{Y}{r} = [[\sstable{Y}{r}]_Z]_Y = [\sstable{Z}{r}]_Y := \sstable{Y}{r}^*,
$$
i.e., $\sstable{Y}{r}^*$ is maximal self-stable.

\ref{p3.1.2s2} Let $\nu \in \pretan{Z}{r}$ and let $\tz \in \nu$. It follows from~\eqref{e3.1.1} that
\begin{equation}\label{p3.1.2e4}
[\nu]_Y = \{\ty \in Seq(Y, \tr)\colon d^{\tr}(\ty, \tz) = 0\}.
\end{equation}
The last equality implies that function~\eqref{p3.1.2e2} is distance-preserving. In addition, using~\eqref{p3.1.2e1}, we obtain
$$
[[\nu]_Y]_Z = \nu \text{ and } [[\beta]_Z]_Y = \beta
$$
for every $\nu \in \pretan{Z}{r}$ and every $\beta \in \pretan{Y}{r}$. Consequently function~\eqref{p3.1.2e2} is bijective. To prove that $\pretan{Y}{r}$ is tangent if $\pretan{Z}{r}$ is tangent we can use statement~\ref{p3.1.2s1} of the present proposition and statement~\ref{p1.3.1s2} of Proposition~\ref{p1.3.1}.
\end{proof}

The transition from a subspace to another subspace preserves the uniqueness of pretangent spaces if these subspaces are asymptotically equivalent.

\begin{corollary}\label{c3.1.5}
Let $Y$ and $Z$ be unbounded subspace of a metric space $X$. Suppose $Y$ and $Z$ are asymptotically equivalent with respect to a scaling sequence $\tr$ and there exists a unique maximal self-stable set $\sstable{Z}{r} \subseteq Seq(Z, \tr)$. Then $\sstable{Y}{r} := [\sstable{Z}{r}]_Y$ is a unique maximal self-stable subset of $Seq(Y, \tr)$.
\end{corollary}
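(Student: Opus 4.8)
The plan is to obtain the corollary as a direct consequence of Proposition~\ref{p3.1.2}, using that the relation ``asymptotically equivalent with respect to $\tr$'' is symmetric in $Y$ and $Z$ (which is immediate from the symmetric shape of Definition~\ref{d7.1}, or from Remark~\ref{r7.2}). This symmetry is what lets me apply Proposition~\ref{p3.1.2} both as stated and with the roles of $Y$ and $Z$ interchanged.

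First I would dispose of existence. Since $Y$ and $Z$ are asymptotically equivalent and $\sstable{Z}{r}$ is a maximal self-stable subset of $Seq(Z, \tr)$, statement~\ref{p3.1.2s1} of Proposition~\ref{p3.1.2} tells us at once that $\sstable{Y}{r} := [\sstable{Z}{r}]_Y$ is maximal self-stable in $Seq(Y, \tr)$. Thus only uniqueness needs proof.

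For uniqueness, let $\sstable{Y}{r}'$ be an arbitrary maximal self-stable subset of $Seq(Y, \tr)$. Applying Proposition~\ref{p3.1.2} with $Y$ and $Z$ exchanged, I obtain that $[\sstable{Y}{r}']_Z$ is a maximal self-stable subset of $Seq(Z, \tr)$ and that the (swapped) double-bracket identity $[[\sstable{Y}{r}']_Z]_Y = \sstable{Y}{r}'$ holds. Because $\sstable{Z}{r}$ is, by hypothesis, the \emph{unique} maximal self-stable subset of $Seq(Z, \tr)$, we must have $[\sstable{Y}{r}']_Z = \sstable{Z}{r}$. Substituting this equality into the identity gives $\sstable{Y}{r}' = [\sstable{Z}{r}]_Y = \sstable{Y}{r}$, which is exactly the uniqueness claim.

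The only step deserving attention — and the nearest thing to an obstacle — is the legitimacy of invoking Proposition~\ref{p3.1.2} with $Y$ and $Z$ interchanged, since that proposition is stated for a fixed ordered pair of subspaces. I would therefore make the symmetry of Definition~\ref{d7.1} explicit before using the reversed form; once that is granted, the argument reduces to a purely formal substitution and no further computation is required.
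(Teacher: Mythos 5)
Your proposal is correct and follows essentially the same route as the paper: the paper's proof likewise takes an arbitrary maximal self-stable $\sstable{Y}{r}^* \subseteq Seq(Y, \tr)$, applies statement \ref{p3.1.2s1} of Proposition~\ref{p3.1.2} with the roles of $Y$ and $Z$ interchanged to conclude that $[\sstable{Y}{r}^*]_Z$ is maximal self-stable (hence equal to $\sstable{Z}{r}$ by the uniqueness hypothesis), and then uses the double-bracket identity \eqref{p3.1.2e1} to get $\sstable{Y}{r}^* = [[\sstable{Y}{r}^*]_Z]_Y = [\sstable{Z}{r}]_Y$. Your explicit remark that the symmetry of Definition~\ref{d7.1} licenses the swapped application of the proposition is a point the paper leaves tacit, so your write-up is, if anything, slightly more careful.
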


\begin{proof}
Let $\sstable{Y}{r}^*$ be a maximal self-stable subset of $Seq(Y, \tr)$. Then, by statement~\ref{p3.1.2s1} of Proposition~\ref{p3.1.2}, $[\sstable{Y}{r}^*]_Z$ is a maximal self-stable subset of $Seq(Z, \tr)$. Hence, by~\eqref{p3.1.2e1},
$$
\sstable{Y}{r}^* = [[\sstable{Y}{r}^*]_Z]_Y = [\sstable{Z}{r}]_Y = \sstable{Y}{r}.
$$
The uniqueness of $\sstable{Y}{r}$ follows.
\end{proof}

\begin{example}\label{ex5.7}
Let \((X, d)\) be the complex plane \(\mathbb{C}\) with the usual metric. Write \(Z\) for the intersection of the strip \(\{z \in \mathbb{C} \colon c_1 \leqslant \operatorname{Im} z \leqslant c_2\}\), where \(c_1\) and \(c_2\) are the real numbers such that \(c_1 < 0 < c_2\), with the half-plane \(\{z \in \mathbb{C} \colon \operatorname{Re} z \geqslant 0\}\) and let 
\[
Y = \{z \in \mathbb{C} \colon \operatorname{Re} z \geqslant 0 \text{ and } \operatorname{Im} z = 0\}.
\]
Then, by Proposition~\ref{p5.2}, \(Y\) and \(Z\) are strongly asymptotically equivalent. By Corollary~\ref{c4.3}, for every scaling sequence \(\tr\), \(Y\) has the unique pretangent space \(\pretan{Y}{r}\) and, by Statement~\ref{p1.3.2:s3} of Proposition~\ref{p1.3.2}, \(\pretan{Y}{r}\) is isometric to \(\RR_{+}\) and tangent. Now, using Proposition~\ref{p1.3.2} and Corollary~\ref{c3.1.5}, we obtain that \(Z\) also has the unique pretangent space \(\pretan{Z}{r}\) and this space is tangent and isometric to \(\RR_{+}\) for every \(\tr\).
\end{example}

Let $(X,d)$ be a metric space and let $p \in X$. For every $t > 0$ we denote by $S(p,t)$ the sphere with the radius $t$ and the center $p$,
$$
S(p,t) := \{x \in X\colon d(x,p) = t\},
$$
and for every $Y \subseteq X$ we write
$$
S_t^Y := S(p,t) \cap Y.
$$
Let $Y$ and $Z$ be subspaces of $(X,d)$. Define
\begin{equation}\label{e3.1.6}
\varepsilon(t,Z,Y) := \sup_{z \in S_t^Z} \inf_{y \in Y} d(z,y)
\end{equation}
and
\begin{equation}\label{e3.1.7}
\varepsilon(t) = \max\{\varepsilon(t,Z,Y), \varepsilon(t,Y,Z)\},
\end{equation}
where we set $\varepsilon(t,Z,Y) = 0$ if $S_t^Z = \varnothing$ and, respectively, $\varepsilon(t,Y,Z) = 0$ if $S_t^Y = \varnothing$.

\begin{remark}\label{r3.1.6}
The quantity $\varepsilon(t)$ is a special case of the following \emph{conditional Hausdorff distance}. Let $(X,d)$ be a metric space and let $A \subseteq Y \subseteq X$ and $B \subseteq Z \subseteq X$. We define the conditional Hausdorff distance between $A$ and $B$ (with respect to $Y$ and $Z$) by
\begin{equation}\label{r3.1.6e1}
d_H (A, B| Y,Z) := \max\{\sup_{z \in B}\inf_{y \in Y} d(z,y), \sup_{y \in A}\inf_{z \in Z} d(z,y)\}.
\end{equation}
It is clear that we have
$$
d_H (A, B| Y,Z) = \varepsilon(t)
$$
for $A = S(p,t) \cap Y$ and $B = S(p,t) \cap Z$ and, moreover, 
$$
d_H (A, B| Y,Z) = d_H (A, B)
$$
for $A = Y$ and $B = Z$, where $d_H (A, B)$ is the usual Hausdorff distance between $A$ and $B$. (See, for example, book~\cite{BBI} for the properties of the Hausdorff distance and connected them properties of the Gromov--Hausdorff convergence.)
\end{remark}

\begin{theorem}\label{t3.1.6}
Let $Y$ and $Z$ be unbounded subspaces of a metric space $(X,d)$. Then $Y$ and $Z$ are strongly asymptotically equivalent if and only if
\begin{equation}\label{t3.1.6e1}
\lim_{t\to \infty} \frac{\varepsilon(t)}{t} = 0.
\end{equation}
\end{theorem}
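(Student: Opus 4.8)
The plan is to prove both implications through the reformulation of asymptotic equivalence recorded in Remark~\ref{r7.2}: $Y$ and $Z$ are asymptotically equivalent with respect to a scaling sequence $\tr$ exactly when, for every $\seq{y_n^1}{n} \in Seq(Y, \tr)$ there is $\seq{z_n^2}{n} \in Seq(Z, \tr)$ with $\lim_{n\to\infty}\frac{d(y_n^1, z_n^2)}{r_n} = 0$, together with the symmetric statement obtained by interchanging $Y$ and $Z$. Both the limit condition~\eqref{t3.1.6e1} and the notion of strong asymptotic equivalence are symmetric in $Y$ and $Z$ (note $\varepsilon(t) = \max\{\varepsilon(t,Z,Y),\varepsilon(t,Y,Z)\}$), so in each direction it suffices to treat one of the two symmetric halves.

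\emph{Necessity.} I would argue by contraposition. If~\eqref{t3.1.6e1} fails then, since $\varepsilon(t)/t \geq 0$, the limit superior $\limsup_{t\to\infty}\frac{\varepsilon(t)}{t}$ is positive, so there are $\delta > 0$ and $t_n \to \infty$ with $\frac{\varepsilon(t_n)}{t_n} \geq \delta$. By symmetry I may assume $\varepsilon(t_n, Y, Z) \geq \delta t_n$ for all $n$ (after passing to a subsequence). By the defining formula~\eqref{e3.1.6} (with $Y$ and $Z$ interchanged), this produces $y_n \in S_{t_n}^Y$ with $\inf_{z \in Z} d(y_n, z) \geq \tfrac{\delta}{2} t_n$. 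Setting $r_n := t_n$ defines a scaling sequence $\tr$, and $\ty := \seq{y_n}{n}$ lies in $Seq(Y, \tr)$ with $\Dist{y} = 1$. For every $\seq{z_n}{n} \in Seq(Z, \tr)$ one then has $\frac{d(y_n, z_n)}{r_n} \geq \frac{1}{t_n}\inf_{z \in Z} d(y_n, z) \geq \tfrac{\delta}{2}$, so $\lim_{n\to\infty}\frac{d(y_n, z_n)}{r_n} = 0$ cannot hold. By Remark~\ref{r7.2}, $Y$ and $Z$ are not asymptotically equivalent with respect to this $\tr$, hence not strongly asymptotically equivalent.

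\emph{Sufficiency.} Fix an arbitrary scaling sequence $\tr = \seq{r_n}{n}$ and a sequence $\seq{y_n^1}{n} \in Seq(Y, \tr)$; I must build $\seq{z_n^2}{n} \in Seq(Z, \tr)$ with $\frac{d(y_n^1, z_n^2)}{r_n}\to 0$. Writing $t_n := d(y_n^1, p)$, the limit $L := \lim_{n\to\infty}\frac{t_n}{r_n}$ exists and is finite. If $L > 0$ then $t_n \to \infty$, and since $y_n^1 \in S_{t_n}^Y$ I may pick $z_n^2 \in Z$ with $d(y_n^1, z_n^2) \le \inf_{z\in Z}d(y_n^1, z) + 1 \le \varepsilon(t_n) + 1$, whence
\[
\frac{d(y_n^1, z_n^2)}{r_n} \le \frac{\varepsilon(t_n)}{t_n}\cdot\frac{t_n}{r_n} + \frac{1}{r_n} \longrightarrow 0
\]
by~\eqref{t3.1.6e1}. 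If $L = 0$, I instead take $z_n^2 \equiv z_0$ for a fixed $z_0 \in Z$, so that $\frac{d(y_n^1, z_n^2)}{r_n} \le \frac{t_n + d(p, z_0)}{r_n}\to 0$. For membership in $Seq(Z, \tr)$: when $L > 0$ the triangle inequality gives $\lim_{n\to\infty}\frac{d(z_n^2, p)}{r_n} = L$, and when $L = 0$ the constant sequence is bounded, hence lies in $Seq(Z, \tr)$ by statement~\ref{p1.2.2s0} of Proposition~\ref{p1.2.2}. The symmetric construction (using $\varepsilon(t, Z, Y)$) sends each element of $Seq(Z, \tr)$ to a nearby element of $Seq(Y, \tr)$. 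By Remark~\ref{r7.2} the subspaces are asymptotically equivalent with respect to $\tr$, and as $\tr$ was arbitrary, strongly asymptotically equivalent.

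The main obstacle is the degenerate case $L = 0$ in the sufficiency direction: there the natural nearest-point construction controlled by $\varepsilon(t_n)$ collapses, because $t_n$ may remain bounded and the factorization $\frac{\varepsilon(t_n)}{r_n} = \frac{\varepsilon(t_n)}{t_n}\cdot\frac{t_n}{r_n}$ gives no decay (the factor $\frac{\varepsilon(t_n)}{t_n}$ is uncontrolled for bounded $t_n$). The remedy is to abandon the $\varepsilon$-governed choice and use a bounded comparison sequence in the target subspace. A secondary point demanding care throughout is confirming that each constructed comparison sequence genuinely belongs to $Seq(Z, \tr)$ or $Seq(Y, \tr)$, which rests on the triangle-inequality estimate for $\frac{d(z_n^2, p)}{r_n}$ together with statement~\ref{p1.2.2s0} of Proposition~\ref{p1.2.2}.
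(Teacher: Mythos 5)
Your proposal is correct, and both implications follow the paper's own route: for necessity, the paper likewise argues by contraposition, takes \(r_n := t_n\) along a sequence where \(\varepsilon(t,Z,Y)/t\) stays bounded away from zero, extracts \(z_n \in S_{t_n}^Z\) with \(\inf_{y\in Y} d(z_n,y) \geq c\,t_n\), and concludes that no element of \(Seq(Y,\tilde{t})\) can be asymptotically close to \(\tz\); for sufficiency, the paper uses the same nearest-point construction with error controlled by \(\varepsilon(t_n,Z,Y)\). The one genuine difference is your case split on \(L\), and it is an improvement rather than a detour. The paper asserts, as its equation \eqref{t3.1.6e3}, that every \(\tz \in Seq(Z,\tr)\) satisfies \(\lim_{n\to\infty} d(z_n,p) = \infty\); this is false in general (every bounded sequence of points of \(Z\) belongs to \(Seq(Z,\tr)\) by statement \ref{p1.2.2s0} of Proposition~\ref{p1.2.2}), and since hypothesis \eqref{t3.1.6e1} yields \(\varepsilon(t_n,Z,Y)/t_n \to 0\) only when \(t_n \to \infty\), the paper's argument has a gap exactly in your degenerate case \(L = 0\). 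Your observation that \(L > 0\) forces \(t_n \to \infty\), together with the constant comparison sequence when \(L = 0\), closes that gap. A second, smaller refinement: your fixed additive slack \(+1\) in the choice of \(z_n^2\) (harmless after dividing by \(r_n \to \infty\)) avoids the paper's choice \(d(z_n,y_n) \leq \varepsilon(t_n,Z,Y) + c\,t_n\) followed by ``letting \(c \to 0\)'', which is slightly loose because the sequence constructed there depends on \(c\).
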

\begin{proof}
Suppose that limit relation~\eqref{t3.1.6e1} holds. Let $\tr = \seq{r_n}{n}$ be a scaling sequence, let $p \in X$, and let $\tz = \seq{z_n}{n} \in Seq(Z, \tr)$. To find $\ty = \seq{y_n}{n} \in Seq(Y, \tr)$ such that 
\begin{equation}\label{t3.1.6e2}
d^{\tr}(\ty, \tz) = 0
\end{equation}
note that the statement $\tz \in Seq(Z, \tr)$ implies
\begin{equation}\label{t3.1.6e3}
\lim_{n\to\infty} d(z_n, p) = \infty.
\end{equation}
It follows from~\eqref{e3.1.7} and~\eqref{t3.1.6e1} that
\begin{equation}\label{t3.1.6e4}
\lim_{t\to\infty} \frac{\varepsilon(t,Z,Y)}{t} = 0.
\end{equation}
By~\eqref{t3.1.6e3} there is $n_0 \in \NN$ such that $d(z_n, p) > 0$ if $n \geq n_0$. Write
\begin{equation}\label{t3.1.6e5}
t_n := \begin{cases}
1 & \text{if $n < n_0$}\\
d(z_n, p) & \text{if $n \geq n_0$}.
\end{cases}
\end{equation}
Let $c>0$. The definition of $\varepsilon(t,Z,Y)$ and~\eqref{t3.1.6e4} imply, for all sufficiently large $n \in \NN$, there are $y_n \in Y$ with
\begin{equation}\label{t3.1.6e6}
d(z_n, y_n) \leq \varepsilon(t_n,Z,Y) + c t_n.
\end{equation}
Set $\ty := \seq{y_n}{n}$, where $y_n$ are some points of $Y$ for which~\eqref{t3.1.6e6} holds. Now using~\eqref{t3.1.6e3}--\eqref{t3.1.6e5} we obtain
\begin{multline*}
d^{\tr}(\tz, \ty) = \limsup_{n\to\infty} \frac{d(z_n, y_n)}{r_n} \leq \lim_{n\to\infty} \frac{d(z_n, p)}{r_n} \limsup_{n\to\infty} \frac{d(z_n, y_n)}{t_n} \\*
\leq \Dist{z} \limsup_{n\to\infty} \frac{\varepsilon(t_n,Z,Y) + ct_n}{t_n} = c.
\end{multline*}
Letting $c$ to zero, we obtain \eqref{t3.1.6e2}. Similarly, we can prove that, for every $\ty \in Seq(Y, \tr)$, there is $\tz \in Seq(Z, \tr)$ such that $d^{\tr}(\tz, \ty) = 0$. Hence if~\eqref{t3.1.6e1} holds, then $Y$ and $Z$ are strongly asymptotically equivalent.

Suppose now that~\eqref{t3.1.6e1} does not hold. Without loss of generality we can assume that 
$$
\limsup_{t\to \infty} \frac{\varepsilon(t,Z,Y)}{t} > 0.
$$
Then there is a sequence $\tilde{t}$ of positive numbers $t_n$ with $\lim_{n\to\infty} t_n = \infty$ and there is a constant $c > 0$ such that, for every $n \in \NN$, there exists $z_n \in S_{t_n}^Z$ for which
\begin{equation}\label{t3.1.6e7}
\inf_{y \in Y} d(z_n,y) \geq ct_n = cd(p, z_n).
\end{equation}
Let us denote by $\tz$ the sequence of points $z_n \in S_{t_n}^Z$ which satisfy~\eqref{t3.1.6e7}. Take the sequence $\tilde{t} = \seq{t_n}{n}$ as a scaling sequence. Then $\tz \in Seq(Z, \tilde{t})$ and, by~\eqref{t3.1.6e7},
$$
d^{\tr}(\tz, \ty) \geq c > 0
$$ 
holds for every $\ty = \seq{y_n}{n} \in Seq(Y, \tilde{t})$. Consequently, $Y$ and $Z$ are not strongly asymptotically equivalent.
\end{proof}

Let $(X, d)$ be an unbounded metric space and let $Y$ be an unbounded metric subspace of $X$. If $\sstable{X}{r} \subseteq Seq(X, \tr)$ and $\sstable{Y}{r} \subseteq Seq(Y, \tr)$ are maximal self-stable sets and $\sstable{Y}{r} \subseteq \sstable{X}{r}$, then there is a unique isometric embedding $Em_Y \colon \pretan{Y}{r} \to \pretan{X}{r}$ such that the following diagram
\begin{equation}\label{t3.1.6e8}
\ctdiagram{
\def\x{40}
\ctv -\x,\x: {\sstable{Y}{r}}
\ctv -\x,-\x: {\pretan{Y}{r}}
\ctv \x,\x: {\sstable{X}{r}}
\ctv \x,-\x: {\pretan{X}{r}}
\ctet -\x,\x,\x,\x:{in}
\ctet -\x,-\x,\x,-\x:{Em_Y}
\ctel -\x,\x,-\x,-\x:{\pi_Y}
\cter \x,\x,\x,-\x:{\pi_X}
}
\end{equation}
is commutative. Here $\pretan{X}{r}$ and $\pretan{Y}{r}$ are the pretangent spaces corresponding to $\sstable{X}{r}$ and, respectively, to $\sstable{Y}{r}$, $\pi_Y$ and $\pi_X$ are the appropriate natural projections defined similarly to \eqref{e1.1.9} and $in(\ty) = \ty$ for all $\ty \in \sstable{Y}{r}$.
\medskip

Using Theorem~\ref{t3.1.6} with $Z = X$ we obtain the following corollary.

\begin{corollary}\label{c3.1.8}
Let $(X,d)$ be an unbounded metric space and let $Y$ be an unbounded subspace of $X$. Then the following conditions are equivalent.
\begin{enumerate}
\item\label{c3.1.8s1} For every $\tr$ and every maximal self-stable $\sstable{X}{r} \subseteq Seq(X, \tr)$ there is a maximal self-stable $\sstable{Y}{r} \subseteq Seq(Y, \tr)$ such that $\sstable{Y}{r} \subseteq \sstable{X}{r}$ and the embedding $E_{m_Y} \colon \pretan{Y}{r} \to \pretan{X}{r}$ is an isometry.
\item\label{c3.1.8s2} The equality
$$
\lim_{t\to\infty} \frac{\varepsilon(t, X, Y)}{t} = 0
$$
holds.
\item\label{c3.1.8s3} $X$ and $Y$ are strongly asymptotically equivalent.
\end{enumerate}
\end{corollary}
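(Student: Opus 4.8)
The plan is to obtain all three equivalences from Theorem~\ref{t3.1.6} specialized to $Z = X$, supplemented by Proposition~\ref{p3.1.2}. I would prove the two equivalences $\ref{c3.1.8s2} \Leftrightarrow \ref{c3.1.8s3}$ and $\ref{c3.1.8s1} \Leftrightarrow \ref{c3.1.8s3}$, which together yield the full claim.

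For $\ref{c3.1.8s2} \Leftrightarrow \ref{c3.1.8s3}$ the decisive observation is that, since $Y \subseteq X$, every $y \in S_t^Y$ satisfies $\inf_{x \in X} d(y, x) = 0$ (take $x = y$), so that $\varepsilon(t, Y, X) = 0$ for all $t > 0$. Hence the maximum in \eqref{e3.1.7} collapses, $\varepsilon(t) = \varepsilon(t, X, Y)$, and the limit relation \eqref{t3.1.6e1} of Theorem~\ref{t3.1.6} reads precisely as condition~\ref{c3.1.8s2}. Applying Theorem~\ref{t3.1.6} with $Z = X$ then identifies \ref{c3.1.8s2} with the strong asymptotic equivalence of $X$ and $Y$, i.e.\ with \ref{c3.1.8s3}. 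This step is immediate once the vanishing of $\varepsilon(t, Y, X)$ is recorded.

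For $\ref{c3.1.8s3} \Rightarrow \ref{c3.1.8s1}$ I would fix $\tr$ and a maximal self-stable $\sstable{X}{r}$, and put $\sstable{Y}{r} := [\sstable{X}{r}]_Y$ in the notation of \eqref{e3.1.1}. Since \ref{c3.1.8s3} makes $X$ and $Y$ asymptotically equivalent with respect to $\tr$, statement~\ref{p3.1.2s1} of Proposition~\ref{p3.1.2} (with $Z = X$) gives that $\sstable{Y}{r}$ is maximal self-stable, and statement~\ref{p3.1.2s2} gives that $\nu \mapsto [\nu]_Y$ is an isometry $\pretan{X}{r} \to \pretan{Y}{r}$. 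Two checks make $Em_Y$ from \eqref{t3.1.6e8} available and force it to be an isometry: first, $\sstable{Y}{r} \subseteq \sstable{X}{r}$, because any $\ty$ with $d^{\tr}(\tx, \ty) = 0$ for some $\tx \in \sstable{X}{r}$ is mutually stable with every element of $\sstable{X}{r}$ by Lemma~\ref{l1.2.5} and hence lies in $\sstable{X}{r}$ by Lemma~\ref{l1.2.4}; second, a routine diagram chase using the commutativity of \eqref{t3.1.6e8} and the formula \eqref{p3.1.2e4} shows that $\nu \mapsto [\nu]_Y$ is the two-sided inverse of $Em_Y$. Thus $Em_Y$ is a surjective isometric embedding, i.e.\ an isometry.

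For the converse $\ref{c3.1.8s1} \Rightarrow \ref{c3.1.8s3}$ I would verify asymptotic equivalence directly for each $\tr$. The inclusion $Seq(Y, \tr) \hookrightarrow Seq(X, \tr)$ already serves as $\bm{\Phi}$ in Definition~\ref{d7.1}, so the task reduces to finding, for every $\tx \in Seq(X, \tr)$, some $\ty \in Seq(Y, \tr)$ with $d^{\tr}(\tx, \ty) = 0$. Extending $\{\tx\}$ to a maximal self-stable $\sstable{X}{r}$ by Zorn's lemma and invoking \ref{c3.1.8s1}, I get $\sstable{Y}{r} \subseteq \sstable{X}{r}$ with $Em_Y$ an isometry, in particular surjective; surjectivity yields $\ty \in \sstable{Y}{r}$ with $Em_Y(\pi_Y(\ty)) = \pi_X(\tx)$, and commutativity of \eqref{t3.1.6e8} then gives $\pi_X(\ty) = \pi_X(\tx)$, i.e.\ $d_{\tr}(\tx, \ty) = 0$; since $\tx, \ty$ lie in the self-stable set $\sstable{X}{r}$, this upgrades to $d^{\tr}(\tx, \ty) = 0$. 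I expect this last implication to be the main obstacle: one must carefully distinguish the two pseudometrics $d_{\tr}$ and $d^{\tr}$ and transfer the purely abstract surjectivity of $Em_Y$ back into the concrete $d^{\tr}$-approximation demanded by Definition~\ref{d7.1}.
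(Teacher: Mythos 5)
Your proof is correct, and for the two implications the paper actually writes out it follows the same route: the equivalence \ref{c3.1.8s2} \(\Leftrightarrow\) \ref{c3.1.8s3} via Theorem~\ref{t3.1.6} with \(Z = X\) (your explicit remark that \(\varepsilon(t, Y, X) = 0\) because \(Y \subseteq X\), so that \(\varepsilon(t) = \varepsilon(t, X, Y)\), is precisely the detail needed to match condition \ref{c3.1.8s2}, stated in terms of \(\varepsilon(t, X, Y)\) alone, with the limit relation \eqref{t3.1.6e1}, stated in terms of \(\varepsilon(t)\)), and the implication \ref{c3.1.8s3} \(\Rightarrow\) \ref{c3.1.8s1} via Proposition~\ref{p3.1.2} applied with \(Z = X\), where your use of Lemmas~\ref{l1.2.5} and~\ref{l1.2.4} to obtain \([\sstable{X}{r}]_Y \subseteq \sstable{X}{r}\) spells out what the paper compresses into the unexplained identity \([\sstable{X}{r}]_Y = \tilde{Y} \cap \sstable{X}{r}\). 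Where you genuinely go beyond the paper is the implication \ref{c3.1.8s1} \(\Rightarrow\) \ref{c3.1.8s3}: the paper's proof establishes only \ref{c3.1.8s2} \(\Leftrightarrow\) \ref{c3.1.8s3} and \ref{c3.1.8s3} \(\Rightarrow\) \ref{c3.1.8s1}, and never derives anything from \ref{c3.1.8s1}, so as written it does not close the cycle of implications. Your argument closes it, and it is sound: extend \(\{\tx\}\) to a maximal self-stable \(\sstable{X}{r}\) by Zorn's lemma, use the surjectivity of the isometry \(Em_Y\) and the commutativity of \eqref{t3.1.6e8} to produce \(\ty \in \sstable{Y}{r} \subseteq \sstable{X}{r}\) with \(d_{\tr}(\tx, \ty) = 0\), and then observe that mutual stability of \(\tx\) and \(\ty\) inside the self-stable set \(\sstable{X}{r}\) upgrades this to \(d^{\tr}(\tx, \ty) = 0\); together with the fact that the inclusion \(Seq(Y, \tr) \subseteq Seq(X, \tr)\) serves as \(\bm{\Phi}\) in Definition~\ref{d7.1}, this gives asymptotic equivalence for every \(\tr\), i.e., condition \ref{c3.1.8s3}. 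Your care in distinguishing \(d_{\tr}\) from \(d^{\tr}\) at that step is exactly where a sloppier argument would break down, since Definition~\ref{d7.1} is formulated for \(d^{\tr}\) while the diagram \eqref{t3.1.6e8} only yields information about \(d_{\tr}\).
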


\begin{proof}
The equivalence \ref{c3.1.8s2} $\Leftrightarrow$ \ref{c3.1.8s3} follows from Theorem~\ref{t3.1.6}. To prove \ref{c3.1.8s3} $\Rightarrow$ \ref{c3.1.8s1} we note that the equality
$$
[\sstable{X}{r}]_Y = \tilde{Y} \cap \sstable{X}{r}
$$
holds for every maximal self-stable $\sstable{X}{r}$ if $X$ and $Y$ are strongly asymptotically equivalent. Consequently, \ref{c3.1.8s3} implies \ref{c3.1.8s1} because the mapping $E_{m_Y}$ is an isometry whenever $X$ and $Y$ are strongly asymptotically equivalent.
\end{proof}

\begin{remark}
Theorem~\ref{t3.1.6} and Corollary~\ref{c3.1.8} can be considered as asymptotic modifications of previously proved facts from~\cite{Dov2008}.
\end{remark}

In the rest of the section we expand the concept of asymptotically equivalent subspaces of metric spaces to the concept of \emph{asymptotically isometric} metric spaces.

For every mapping \(F \colon A \to B\) we will denote by \(\tldbf{F}\) the mapping defined on the set of all sequences \(\ta = \seq{a_n}{n} \subseteq A\) such that \(\tldbf{F}(\tld{a}) := \tld{b} = \seq{b_n}{n} \subseteq B\) with \(b_n = F(a_n)\) for every \(n \in \NN\). Moreover, if \(\tld{A}\) is a set of sequences whose elements belong to \(A\), then we write
\begin{equation}\label{e7.25}
\tldbf{F}(\tld{A}) := \{\tldbf{F}(\ta) \colon \ta \in \tld{A}\}.
\end{equation}
Now we are ready to introduce the concept of asymptotically isometric spaces.

\begin{definition}\label{d7.15}
Let \((Y, \rho)\) and \((Z, \delta)\) be unbounded metric spaces and let $\tr = \seq{r_n}{n}$ be a scaling sequence. The spaces \((Y, \rho)\) and \((Z, \delta)\) are \emph{asymptotically isometric} with respect to $\tr$ if there are mappings \(\Phi \colon Y \to Z\) and \(\Psi \colon Z \to Y\) such that:
\begin{enumerate}
\item \label{d7.15:s1} \(\tldbf{\Phi}(\ty)\) belongs to \(Seq(Z, \tr)\) for every \(\tld{y} \in Seq(Y, \tr)\);
\item \label{d7.15:s2} \(\tldbf{\Psi}(\tz)\) belongs to \(Seq(Y, \tr)\) for every \(\tld{z} \in Seq(Z, \tr)\),
\end{enumerate}
where \(\tldbf{\Phi}\) and \(\tldbf{\Psi}\) are defined as in \eqref{e7.25} and, in addition, the restrictions \(\tldbf{\Phi}|_{Seq(Y, \tr)}\) and \(\tldbf{\Psi}|_{Seq(Z, \tr)}\) are pseudoisometries.
\end{definition}

The mappings \(\Phi\) and \(\Psi\) satisfying Definition~\ref{d7.15} will be called \emph{asymptotic isometries} of metric spaces \(Y\) and \(Z\). Moreover, we will say that $Y$ and $Z$ are \emph{strongly asymptotically isometric} if $Y$ and $Z$ are asymptotically isometric for all scaling sequences.

\begin{example}\label{ex7.2}
Let \(Y\) and \(Z\) be isometric metric spaces. Then \(Y\) and \(Z\) are strongly asymptotically isometric and every isometry \(\Phi \colon Y \to Z\) is an asymptotic isometry for every scling sequence \(\tr\).
\end{example}

Using Theorem~\ref{t2.10} we obtain.

\begin{proposition}\label{p7.21}
Let \(X_1\) and \(X_2\) be unbounded metric spaces and \(\tr = \seq{r_n}{n}\) be a scaling sequence. If \(X_1\) and \(X_2\) are asymptotically isometric with respect to \(\tr\), then the metric spaces \((\bfpretan{X_1}{r}, \bm{\rho}_1)\) and \((\bfpretan{X_2}{r}, \bm{\rho}_2)\) are isometric.
\end{proposition}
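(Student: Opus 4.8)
The plan is to extract from Definition~\ref{d7.15} a single pseudoisometry between the two pseudometric spaces whose metric identifications are the spaces named in the statement, and then to quote Theorem~\ref{t2.10} directly.

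First I would record that, by statement~\ref{d7.15:s1} of Definition~\ref{d7.15}, the mapping $\tldbf{\Phi}|_{Seq(X_1, \tr)}$ carries $Seq(X_1, \tr)$ into $Seq(X_2, \tr)$, so it is a genuine mapping
\[
\tldbf{\Phi}|_{Seq(X_1, \tr)} \colon (Seq(X_1, \tr), d^{\tr}) \to (Seq(X_2, \tr), d^{\tr}),
\]
and that, again by Definition~\ref{d7.15}, this restriction is a pseudoisometry. Consequently the pseudometric spaces $(Seq(X_1, \tr), d^{\tr})$ and $(Seq(X_2, \tr), d^{\tr})$ are pseudoisometric in the sense introduced just before Theorem~\ref{t2.10}. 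Note that only the mapping $\Phi$ is needed here; the companion mapping $\Psi$ from Definition~\ref{d7.15} plays no role in this direction, since a single pseudoisometry already witnesses that the two spaces are pseudoisometric.

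Next I would invoke Theorem~\ref{t2.10}: since these two pseudometric spaces are pseudoisometric, their metric identifications are isometric metric spaces. It then remains only to match notation. By the definition of $(\bfpretan{X_i}{r}, \bm{\rho}_i)$ given in the statement, $(\bfpretan{X_1}{r}, \bm{\rho}_1)$ is exactly the metric identification of $(Seq(X_1, \tr), d^{\tr})$ and $(\bfpretan{X_2}{r}, \bm{\rho}_2)$ is exactly the metric identification of $(Seq(X_2, \tr), d^{\tr})$. Hence the spaces $(\bfpretan{X_1}{r}, \bm{\rho}_1)$ and $(\bfpretan{X_2}{r}, \bm{\rho}_2)$ are isometric, which is the desired conclusion.

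I do not expect any genuine obstacle: the entire content is already packaged inside Theorem~\ref{t2.10}, and the only point requiring care is the bookkeeping that the pseudometric $d^{\tr}$ is formed on $Seq(X_1, \tr)$ and on $Seq(X_2, \tr)$ from the respective ambient metrics of $X_1$ and $X_2$, so that the abstract metric identifications appearing in Theorem~\ref{t2.10} coincide precisely with the spaces $\bfpretan{X_1}{r}$ and $\bfpretan{X_2}{r}$ named in the statement.
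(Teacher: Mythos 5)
Your proof is correct and takes exactly the route the paper intends: the paper's entire justification is the phrase ``Using Theorem~\ref{t2.10} we obtain,'' and you have filled in precisely that argument, extracting the pseudoisometry $\tldbf{\Phi}|_{Seq(X_1, \tr)}$ from Definition~\ref{d7.15} and identifying $(\bfpretan{X_i}{r}, \bm{\rho}_i)$ with the metric identification of $(Seq(X_i, \tr), d^{\tr})$. Your side remark that $\Psi$ is not needed is also accurate, since the relation ``pseudoisometric'' only requires one pseudoisometry (and is symmetric by Corollary~\ref{c2.7}).
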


The following theorem shows, in particular, that the concept of asymptotically isometric metric spaces is an extension of the concept of asymptotically equivalent subspaces of a given metric space.

\begin{theorem}\label{t7.22}
Let \((X, d)\) be a metric space, \(\tr = \seq{r_n}{n}\) be a scaling sequence and let \(Y\), \(Z\) be unbounded and asymptotically equivalent with respect to \(\tr\) subspaces of \((X, d)\). Then \(Y\) and \(Z\) are asymptotically isometric with respect to \(\tr\).
\end{theorem}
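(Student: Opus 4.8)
The plan is to promote the sequence-level maps furnished by Definition~\ref{d7.1} to genuine pointwise maps \(\Phi\colon Y\to Z\) and \(\Psi\colon Z\to Y\), and then to verify that their induced maps on sequences restrict to pseudoisometries, so that Definition~\ref{d7.15} is met. Using the Axiom of Choice, for each \(y\in Y\) I would select a point \(\Phi(y)\in Z\) with
\[
d(y,\Phi(y)) \leqslant \inf_{z\in Z} d(y,z) + 1,
\]
and define \(\Psi\) symmetrically by \(d(z,\Psi(z))\leqslant \inf_{y\in Y}d(z,y)+1\) for \(z\in Z\). The whole theorem then reduces to a single verification: that
\[
d^{\tr}(\ty, \tldbf{\Phi}(\ty)) = 0 \quad\text{for every } \ty = \seq{y_n}{n}\in Seq(Y,\tr),
\]
together with the symmetric statement for \(\Psi\).

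To prove this, let \(\bm{\Phi}\colon Seq(Y,\tr)\to Seq(Z,\tr)\) be a map as in Definition~\ref{d7.1}, so that \(d^{\tr}(\ty,\bm{\Phi}(\ty))=0\). Since \(d^{\tr}\) is a nonnegative \(\limsup\), this forces \(\lim_{n\to\infty} d(y_n, w_n)/r_n = 0\), where \(\seq{w_n}{n}:=\bm{\Phi}(\ty)\subset Z\). Because \(\inf_{z\in Z}d(y_n,z)\leqslant d(y_n,w_n)\) and \(r_n\to\infty\), I obtain
\[
0 \leqslant \frac{d(y_n,\Phi(y_n))}{r_n} \leqslant \frac{\inf_{z\in Z} d(y_n,z)}{r_n} + \frac{1}{r_n} \leqslant \frac{d(y_n,w_n)}{r_n} + \frac{1}{r_n} \xrightarrow[n\to\infty]{} 0,
\]
hence \(d^{\tr}(\ty,\tldbf{\Phi}(\ty)) = \limsup_n d(y_n,\Phi(y_n))/r_n = 0\), as required. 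The key point is that \(\inf_{z\in Z} d(y_n,z)\) depends only on the point \(y_n\), so the arbitrary approximating sequence \(\bm{\Phi}(\ty)\) controls the single pointwise map \(\Phi\) along every admissible \(\ty\) simultaneously; the symmetric argument with \(\bm{\Psi}\) handles \(\Psi\).

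It remains to package this into Definition~\ref{d7.15}. First, \(\ty\in Seq(Y,\tr)\subseteq Seq(X,\tr)\) and \(d^{\tr}(\ty,\tldbf{\Phi}(\ty))=0\) show that \(\ty\) and \(\tldbf{\Phi}(\ty)\) are mutually stable with \(d_{\tr}(\ty,\tldbf{\Phi}(\ty))=0\); since the terms of \(\tldbf{\Phi}(\ty)\) lie in \(Z\), Lemma~\ref{l4.6} together with Remark~\ref{r1.1.3} gives \(\tldbf{\Phi}(\ty)\in Seq(Z,\tr)\), which is condition~\ref{d7.15:s1}, and condition~\ref{d7.15:s2} follows symmetrically for \(\Psi\). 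Thus \(\tldbf{\Phi}|_{Seq(Y,\tr)}\) maps into \(Seq(Z,\tr)\) and satisfies \eqref{l7.2:e1}, so it is distance preserving by Lemma~\ref{l7.2}; moreover, since \(Y\) and \(Z\) are asymptotically equivalent, Lemma~\ref{l7.3} shows that for every \(\tz_1\in Seq(Z,\tr)\) there is \(\ty_1\in Seq(Y,\tr)\) with \(d^{\tr}(\tldbf{\Phi}(\ty_1),\tz_1)=0\). These two properties are precisely \ref{d2.11:s1} and \ref{d2.11:s2} of Definition~\ref{d2.5}, so \(\tldbf{\Phi}|_{Seq(Y,\tr)}\) is a pseudoisometry, and \(\tldbf{\Psi}|_{Seq(Z,\tr)}\) is one by the same reasoning. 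Hence \(\Phi\) and \(\Psi\) are asymptotic isometries and \(Y\), \(Z\) are asymptotically isometric with respect to \(\tr\).

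The main obstacle is the passage from sequence-level to pointwise data: a priori the map \(\bm{\Phi}\) of Definition~\ref{d7.1} acts on whole sequences and need not arise from any map on points. The device that resolves this is that a single ``almost nearest point'' assignment \(\Phi\) is automatically compatible with every admissible sequence at once, because \(\inf_{z\in Z}d(y_n,z)\) is dominated by the \(n\)-th term of any \(Z\)-sequence approximating \(\ty\).
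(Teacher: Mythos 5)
Your proof is correct and follows essentially the same route as the paper's: an approximate-nearest-point selection \(\Phi\colon Y \to Z\) (your additive constant \(1\) plays the role of the paper's \(\varepsilon_1\)), controlled along sequences by the map \(\bm{\Phi}\) from Definition~\ref{d7.1}, then Lemma~\ref{l4.6} for membership in \(Seq(Z,\tr)\) and Lemmas~\ref{l7.2}--\ref{l7.3} (which is exactly the paper's appeal to the proof of Proposition~\ref{p7.4}) for the pseudoisometry property.
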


In the proof of Theorem~\ref{t7.22} we will use the concept of distance from a point to set.

Let \((X, d)\) be a metric space. For every \(x \in X\) and every nonempty set \(A \subseteq X\), the \emph{distance} from \(x\) to \(A\) is defined by
\[
\Delta (x, A) := \inf_{y \in A} d(x, y).
\]

\begin{proof}[Proof of Theorem~\(\ref{t7.22}\)]
Let \(\varepsilon_1 > 0\) be fixed. Then, for every \(x \in X\) and every nonempty \(A \subseteq X\), there is \(a_x \in A\) such that
\begin{equation}\label{t7.22:e1}
|d(x, a_x) - \Delta (x, A)| \leqslant \varepsilon_1.
\end{equation}
Consequently, there is a mapping \(\Phi \colon Y \to Z\) such that
\begin{equation}\label{t7.22:e2}
|d(\Phi(y), y) - \Delta (y, Z)| \leqslant \varepsilon_1
\end{equation}
for every \(y \in Y\). Similarly, there exists \(\Psi \colon Z \to Y\) satisfying the inequality
\begin{equation}\label{t7.22:e3}
|d(\Psi(z), z) - \Delta (z, Y)| \leqslant \varepsilon_1
\end{equation}
for every \(z \in Z\). We claim that \(\Phi \colon Y \to Z\) and \(\Psi \colon Z \to Y\) are asymptotic isometries. Let us prove it.

Since \(Y\) and \(Z\) are asymptotically equivalent with respect to \(\tr\), there is a mapping \(\tldbf{\Phi} \colon Seq(Y, \tr) \to Seq(Z, \tr)\) such that 
\begin{equation}\label{t7.22:e4}
d^{\tr}(\ty, \tldbf{\Phi}(\ty)) = 0
\end{equation}
for every \(\ty \in Seq(Y, \tr)\).

Let \(\ty = \seq{y_n}{n}\) belong to \(Seq(Y, \tr)\). Then \(\tldbf{\Phi}(\ty) \in Seq(Z, \tr)\) can be written as \(\tldbf{\Phi}(\ty) = \seq{z_{n, \ty}}{n}\) and, consequently, we may rewrite \eqref{t7.22:e4} in the form
\begin{equation}\label{t7.22:e5}
\limsup_{n \to \infty} \frac{d(y_n, z_{n, \ty})}{r_n} = 0.
\end{equation}
The definition of the distance from \(y \in Y\) to \(Z\) implies that
\begin{equation}\label{t7.22:e7}
\Delta(y_n, Z) \leqslant d(y_n, z_{n, \ty})
\end{equation}
because \(z_{n, \ty} \in Z\) for every \(n \in \NN\). Inequalities~\eqref{t7.22:e7} and \eqref{t7.22:e2} imply
\[
d(\Phi(y_n), y_n) \leqslant \varepsilon_1 + \Delta(y_n, Z) \leqslant \varepsilon_1 + d(y_n, z_{n, \ty}).
\]
Now using \eqref{t7.22:e5} we have
\begin{equation}\label{t7.22:e6}
\limsup_{n \to \infty} \frac{d(\Phi(y_n), y_n)}{r_n} = 0.
\end{equation}
Consequently, by Lemma~\ref{l4.6}, the sequence \(\seq{\Phi(y_n)}{n}\) belongs to \(Seq(Z, \tr)\) for every \(\seq{y_n}{n} \in Seq(Y, \tr)\). Hence, the mapping \(\tldbf{\Phi}\) satisfies condition~\ref{d7.15:s1} of Definition~\ref{d7.15}.

Repeating the above arguments, we find that the mapping \(\tldbf{\Psi}\) satisfies condition~\ref{d7.15:s2} of Definition~\ref{d7.15} and that the equality
\[
d^{\tr}(\tz, \tldbf{\Psi}(\tz)) = 0
\]
holds for every \(\tz \in Seq(Z, \tr)\).

Now as in the proof of Proposition~\ref{p7.4}, we see that
\[
\tldbf{\Phi} \colon Seq(Y, \tr) \to Seq(Z, \tr) \quad \text{and} \quad \tldbf{\Psi} \colon Seq(Z, \tr) \to Seq(Y, \tr)
\]
are pseudoisometries. Thus, \(Y\) and \(Z\) are asymptotically isometric with respect to \(\tr\) by definition. 
\end{proof}

We conclude this section by the following conjecture describing the interconnections between asymptotically isometric metric spaces and asymptotically equivalent metric subspaces of a given metric space.

\begin{conjecture}\label{con7.22}
Let \(Y\) and \(Z\) be unbounded metric spaces and let \(\tr\) be a scaling sequence. Then the following conditions are equivalent.
\begin{enumerate}
\item \label{con7.22:s1} \(Y\) and \(Z\) are asymptotically isometric with respect to \(\tr\).
\item \label{con7.22:s2} There are a metric space \(X\) and unbounded \(Y_1\), \(Z_1 \subseteq X\) such that:
\begin{enumerate}
\item \(Seq(Y, \tr)\) is pseudoisometric to \(Seq(Y_1, \tr)\);
\item \(Seq(Z, \tr)\) is pseudoisometric to \(Seq(Z_1, \tr)\);
\item \(Y_1\) and \(Z_1\) are asymptotically equivalent with respect to \(\tr\).
\end{enumerate}
\end{enumerate}
\end{conjecture}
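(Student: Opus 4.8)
The plan is to prove the two implications of the equivalence separately, treating \(\ref{con7.22:s1} \Rightarrow \ref{con7.22:s2}\) as a gluing construction and \(\ref{con7.22:s2} \Rightarrow \ref{con7.22:s1}\) as a transport-and-rectify argument. Throughout I would keep in mind that \(d^{\tr}\) on each of the spaces \(Seq(Y, \tr)\), \(Seq(Z, \tr)\) is the intrinsic \(\limsup\)-pseudometric of \eqref{e2.10}, so that the coordinatewise form of the asymptotic isometries in Definition~\ref{d7.15} is genuinely stronger than an abstract pseudoisometry of these pseudometric spaces.

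For \(\ref{con7.22:s1} \Rightarrow \ref{con7.22:s2}\), I would build the ambient space by gluing \(Y\) and \(Z\) along the asymptotic isometry. Fix base points \(p \in Y\), \(q \in Z\) and asymptotic isometries \(\Phi \colon Y \to Z\), \(\Psi \colon Z \to Y\) as in Definition~\ref{d7.15}. Take \(X := Y \sqcup Z\) as a set, set \(Y_1 := Y\) and \(Z_1 := Z\), and let \(d_X\) be the shortest-path metric of the disjoint union in which each point \(y\) is joined to \(\Phi(y)\) by a bridge of length \(\sigma(y) = 1 + \sqrt{\rho(y, p)}\) and each \(z\) is joined to \(\Psi(z)\) by a bridge of length \(\sigma'(z) = 1 + \sqrt{\delta(z, q)}\); since every bridge has length \(\geqslant 1\), distinct points keep positive distance and \(d_X\) is a genuine metric. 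The gauges are asymptotically negligible: if \(\ty = \seq{y_n}{n} \in Seq(Y, \tr)\) then \(\rho(y_n, p) = O(r_n)\), whence \(\sigma(y_n)/r_n \to 0\) and \(d_X^{\tr}(\ty, \tldbf{\Phi}(\ty)) = 0\), where \(\tldbf{\Phi}(\ty) \in Seq(Z, \tr)\) by condition~\ref{d7.15:s1} of Definition~\ref{d7.15}. Together with the symmetric statement for \(\Psi\), this yields the asymptotic equivalence of \(Y_1\) and \(Z_1\) required in \ref{con7.22:s2}.

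The technical heart of this direction is the two pseudoisometry requirements in \ref{con7.22:s2}: I must show that the glued metric does not shorten \(Y\) and \(Z\) at scale \(r_n\), so that \(Seq(Y_1, \tr)\) and \(Seq(Z_1, \tr)\) computed with \(d_X^{\tr}\) remain pseudoisometric to \(Seq(Y, \tr)\) and \(Seq(Z, \tr)\) (equivalently, by Theorem~\ref{t2.10}, that \(\bfpretan{Y_1}{r}\cong\bfpretan{Y}{r}\) and \(\bfpretan{Z_1}{r}\cong\bfpretan{Z}{r}\)). Since \(d_X \leqslant \rho\) on \(Y^2\) one inequality is free; for the reverse one combines the fact that the bridges identify each \(\ty\) with \(\tldbf{\Phi}(\ty)\) in the metric identification with the \emph{distance-preserving} (not merely \(1\)-Lipschitz) property of \(\tldbf{\Phi}\), namely \(d^{\tr}(\tldbf{\Phi}(\ty_1), \tldbf{\Phi}(\ty_2)) = d^{\tr}(\ty_1, \ty_2)\), to argue that no dip through \(Z\) produces an \(o(r_n)\)-shortcut. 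Controlling arbitrary alternating \(Y\)--\(Z\) paths index by index and passing to the \(\limsup\) is the delicate point here, and I expect to need both gauges \(\sigma, \sigma'\) and both maps \(\Phi, \Psi\) simultaneously; this step should be surmountable.

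For \(\ref{con7.22:s2} \Rightarrow \ref{con7.22:s1}\), I would first apply Theorem~\ref{t7.22} to the asymptotically equivalent subspaces \(Y_1, Z_1 \subseteq X\) to obtain point maps \(\alpha \colon Y_1 \to Z_1\), \(\beta \colon Z_1 \to Y_1\) whose induced \(\tldbf{\alpha}, \tldbf{\beta}\) are pseudoisometries. If the pseudoisometries furnished by \ref{con7.22:s2} were themselves induced by point maps \(\phi \colon Y \to Y_1\) and \(\psi' \colon Z_1 \to Z\), then \(\Phi := \psi' \circ \alpha \circ \phi\) would be a point map with \(\tldbf{\Phi} = \tldbf{\psi'} \circ \tldbf{\alpha} \circ \tldbf{\phi}\) a pseudoisometry by Lemma~\ref{l2.8}, and symmetrically for \(\Psi\), giving \ref{con7.22:s1}. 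The main obstacle is exactly that ``\(Seq(Y, \tr)\) is pseudoisometric to \(Seq(Y_1, \tr)\)'' provides only an \emph{abstract} pseudoisometry of pseudometric spaces (equivalently, by Theorem~\ref{t2.10}, an isometry of the metric identifications \(\bfpretan{Y}{r}\) and \(\bfpretan{Y_1}{r}\)), which need not respect the coordinatewise structure of sequences and so need not arise from any point map. Upgrading such an abstract pseudoisometry to a point-map-induced asymptotic isometry — the rectification that the rigid, termwise form of Definition~\ref{d7.15} demands — is the crux of the whole conjecture, and I expect this to be the genuinely hard (and possibly false in full generality) ingredient; a realistic first target is to establish it under an extra hypothesis forcing the pseudoisometries in \ref{con7.22:s2} to be realized termwise.
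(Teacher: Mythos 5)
First, a point of orientation: the statement you were asked to prove is Conjecture~\ref{con7.22} --- the paper does not prove it, but poses it as an open problem closing Section~\ref{sec7}, so there is no proof of the authors' to compare yours against; your proposal has to stand entirely on its own, and it does not, because each implication terminates in a claim you admit you cannot establish. For \(\ref{con7.22:s1} \Rightarrow \ref{con7.22:s2}\), the ``no \(o(r_n)\)-shortcut'' step in your gluing is not a delicate-but-routine verification: it is exactly the point where the hypothesis runs out. The pseudoisometry property of \(\tldbf{\Phi}|_{Seq(Y, \tr)}\) constrains only pairs of sequences that belong to \(Seq(Y, \tr)\), i.e.\ those for which \(\lim_{n} \rho(y_n, p)/r_n\) exists, whereas a shortcut in the glued space runs through intermediate pairs \((w_n, w'_n)\) at scale \(O(r_n)\) for which these limits need not exist. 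To convert such pointwise compression by \(\Phi\) into a violation of sequence-level distance preservation you must complete \((w_n)\) and \((w'_n)\) to elements of \(Seq(Y, \tr)\), and this can fail: the fill-in indices require points of \(Y\) at prescribed ratios to \(r_n\), which need not exist when \(Sp(Y)\) is porous at infinity (compare the role of porosity in Theorem~\ref{t5.2}); any fill-in you do insert changes the \(\limsup\)-distances on \emph{both} sides of the desired contradiction in an uncontrolled way, since you know nothing about \(\Phi\) on the fill-in points; and passing to the subsequence \(\tr' = (r_{n_k})_{k \in \NN}\) is not allowed, because Definition~\ref{d7.15} fixes \(\tr\) and asymptotic isometry with respect to \(\tr\) does not transfer to subsequences of \(\tr\). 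Note also that the bridges cannot save you: for the asymptotic equivalence of \(Y_1\) and \(Z_1\) they must have length \(o(r_n)\) along the relevant sequences, and then they are equally negligible along the shortcut, so the only barrier against collapse is precisely the pointwise control you do not have. Thus pointwise compression invisible to \(Seq(Y, \tr)\) is compatible with your hypotheses, and it would shrink \(d_X\)-distances between genuine members of \(Seq(Y, \tr)\), destroying the first pseudoisometry requirement of \ref{con7.22:s2} (it can even change the underlying set \(Seq(Y_1, \tr)\), since membership is decided by \(\lim_n d_X(y_n,p)/r_n\)). Your ``this step should be surmountable'' is therefore unsupported.

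For \(\ref{con7.22:s2} \Rightarrow \ref{con7.22:s1}\) your diagnosis is accurate, but it amounts to conceding the implication: composing the point maps supplied by Theorem~\ref{t7.22} for \(Y_1\), \(Z_1\) with the abstract pseudoisometries of \ref{con7.22:s2} yields, via Lemma~\ref{l2.8}, only an abstract pseudoisometry of \(Seq(Y, \tr)\) and \(Seq(Z, \tr)\) --- equivalently, by Theorem~\ref{t2.10}, an isometry of \(\bfpretan{Y}{r}\) and \(\bfpretan{Z}{r}\) --- whereas Definition~\ref{d7.15} demands a pseudoisometry induced termwise by a single point map \(\Phi \colon Y \to Z\). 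The gap between these two notions is the entire content of the conjecture in this direction, and you explicitly say the upgrade may be false in general. So what you have is a sensible plan together with a correct identification of the two obstructions, not a proof. A realistic contribution along your lines would be a partial result --- for instance, proving both implications under a nonporosity-at-infinity hypothesis on \(Sp(Y)\) and \(Sp(Z)\) that makes the fill-in arguments legitimate --- or a counterexample showing that abstract pseudoisometricity of the \(Seq\)-spaces cannot in general be realized termwise.
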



\providecommand{\bysame}{\leavevmode\hbox to3em{\hrulefill}\thinspace}
\providecommand{\MR}{\relax\ifhmode\unskip\space\fi MR }
\providecommand{\MRhref}[2]{%
  \href{http://www.ams.org/mathscinet-getitem?mr=#1}{#2}
}
\providecommand{\href}[2]{#2}

\end{document}